\theoremstyle{plain}
\newtheorem{lemma}{Lemma}[section]
\newtheorem{proposition}[lemma]{Proposition}
\newtheorem{theorem}[lemma]{Theorem}
\theoremstyle{definition}
\newtheorem{definition}[lemma]{Definition}
\theoremstyle{remark}
\newtheorem{remark}[lemma]{Remark}
\numberwithin{equation}{section}
\newtheorem{mainassumptions}{ Assumptions}[section]
\newcommand{\R}{\mathbb{R}}
\definecolor{orange}{rgb}{1,.549,0}
\definecolor{GreenYellow }{rgb}{ 0.15,   0.69, 0}
\definecolor{Yellowone}{rgb}{ 0, 1., 0} \definecolor{Goldenrod }{rgb}{  0, 0.10, 0.84}
\definecolor{Dandelion }{rgb}{ 0, 0.29, 0.84} 
\definecolor{Apricot }{rgb}{ 0, 0.32, 0.52}
\definecolor{Peach }{rgb}{ 0, 0.50, 0.70} 
\definecolor{GreenYellow}{cmyk}{0.15,0,0.69,0}
\definecolor{RoyalPurple}{cmyk}{0.75,0.90,0,0}
\definecolor{Yellow}{cmyk}{0,0,1,0}
\definecolor{BlueViolet}{cmyk}{0.86,0.91,0,0.04}
\definecolor{Goldenrod}{cmyk}{0,0.10,0.84,0}
\definecolor{Periwinkle}{cmyk}{0.57,0.55,0,0}
\definecolor{Dandelion}{cmyk}{0,0.29,0.84,0}
\definecolor{CadetBlue}{cmyk}{0.62,0.57,0.23,0}
\definecolor{Apricot}{cmyk}{0,0.32,0.52,0}
\definecolor{CornflowerBlue}{cmyk}{0.65,0.13,0,0}
\definecolor{Peach}{cmyk}{0,0.50,0.70,0}
\definecolor{MidnightBlue}{cmyk}{0.98,0.13,0,0.43}
\definecolor{Melon}{cmyk}{0,0.46,0.5,0}
\definecolor{NavyBlue}{cmyk}{0.94,0.54,0,0}
\definecolor{YellowOrange}{cmyk}{0,0.42,1,0}
\definecolor{RoyalBlue}{cmyk}{1,0.50,0,0}
\definecolor{Orange}{cmyk}{0,0.61,0.87,0}
\definecolor{Blue}{cmyk}{1,1,0,0}
\definecolor{BurntOrange}{cmyk}{0,0.51,1,0}
\definecolor{Cerulean}{cmyk}{0.94,0.11,0,0}
\definecolor{Bittersweet}{cmyk}{0,0.75,1,0.24}
\definecolor{Cyan}{cmyk}{1,0,0,0}
\definecolor{RedOrange}{cmyk}{0,0.77,0.87,0}
\definecolor{ProcessBlue}{cmyk}{0.96,0,0,0}
\definecolor{Mahogany}{cmyk}{0,0.85,0.87,0.35}
\definecolor{SkyBlue}{cmyk}{0.62,0,0.12,0}
\definecolor{Maroon}{cmyk}{0,0.87,0.68,0.32}
\definecolor{Turquoise}{cmyk}{0.85,0,0.20,0}
\definecolor{BrickRed}{cmyk}{0,0.89,0.94,0.28}
\definecolor{TealBlue}{cmyk}{0.86,0,0.34,0.02}
\definecolor{Red}{cmyk}{0,1,1,0}
\definecolor{Aquamarine}{cmyk}{0.82,0,0.30,0}
\definecolor{OrangeRed}{cmyk}{0,1,0.50,0}
\definecolor{BlueGreen}{cmyk}{0.85,0,0.33,0}
\definecolor{RubineRed}{cmyk}{0,1,0.13,0}
\definecolor{Emerald}{cmyk}{1,0,0.50,0}
\definecolor{WildStrawberry}{cmyk}{0,0.96,0.39,0}
\definecolor{JungleGreen}{cmyk}{0.99,0,0.52,0}
\definecolor{Salmon}{cmyk}{0,0.53,0.38,0}
\definecolor{SeaGreen}{cmyk}{0.69,0,0.50,0}
\definecolor{CarnationPink}{cmyk}{0,0.63,0,0}
\definecolor{Green}{cmyk}{1,0,1,0}
\definecolor{Magenta}{cmyk}{0,1,0,0}
\definecolor{ForestGreen}{cmyk}{0.91,0,0.88,0.12}
\definecolor{VioletRed}{cmyk}{0,0.81,0,0}
\definecolor{PineGreen}{cmyk}{0.92,0,0.59,0.25}
\definecolor{Rhodamine}{cmyk}{0,0.82,0,0}
\definecolor{LimeGreen}{cmyk}{0.50,0,1,0}
\definecolor{Mulberry}{cmyk}{0.34,0.90,0,0.02}
\definecolor{YellowGreen}{cmyk}{0.44,0,0.74,0}
\definecolor{RedViolet}{cmyk}{0.07,0.90,0,0.34}
\definecolor{SpringGreen}{cmyk}{0.26,0,0.76,0}
\definecolor{Fuchsia}{cmyk}{0.47,0.91,0,0.08}
\definecolor{OliveGreen}{cmyk}{0.64,0,0.95,0.40}
\definecolor{Lavender}{cmyk}{0,0.48,0,0}
\definecolor{RawSienna}{cmyk}{0,0.72,1,0.45}
\definecolor{Thistle}{cmyk}{0.12,0.59,0,0}
\definecolor{Sepia}{cmyk}{0,0.83,1,0.70}
\definecolor{Orchid}{cmyk}{0.32,0.64,0,0}
\definecolor{Brown}{cmyk}{0,0.81,1,0.60}
\definecolor{DarkOrchid}{cmyk}{0.40,0.80,0.20,0}
\definecolor{Tan}{cmyk}{0.14,0.42,0.56,0}
\definecolor{Purple}{cmyk}{0.45,0.86,0,0}
\definecolor{Gray}{cmyk}{0,0,0,0.50}
\definecolor{Plum}{cmyk}{0.50,1,0,0}
\definecolor{Black}{cmyk}{0,0,0,1}
\definecolor{Violet}{cmyk}{0.79,0.88,0,0}
\definecolor{White}{cmyk}{0,0,0,0}
\definecolor{rltred}{rgb}{0.75,0,0}
\definecolor{rltgreen}{rgb}{0,0.5,0}
\definecolor{oneblue}{rgb}{0,0,0.75}
\definecolor{marron}{rgb}{0.64,0.16,0.16}
\definecolor{forestgreen}{rgb}{0.13,0.54,0.13}
\definecolor{purple}{rgb}{0.62,0.12,0.94}
\definecolor{dockerblue}{rgb}{0.11,0.56,0.98}
\definecolor{freeblue}{rgb}{0.25,0.41,0.88}
\definecolor{myblue}{rgb}{0,0.2,0.4}
\definecolor{Melon}{rgb}{ 0.46, 0.50, 0}
\definecolor{Melone}{rgb}{ 0, 0.46, 0.50}
  \author{Yacouba Simporé}
\address[  Yacouba Simporé]{Chair for Dynamics, Control, Machine Learning and Numerics, Alexander Von Humboldt-
Professorship, Department of Mathematics, Friedrich-Alexander-Universität Erlangen-Nürnberg, Cauerstraße 11, 91058 Erlangen, Germany
\newline\indent
UYAT, Burkina Faso, Laboratoire LAMIA}
\email{simpore.yacouba@gmail.com}
\title[age-size structured]{Null Controllability of Size-Age Dependent Population Dynamics Models}
\begin{document}
\maketitle
\begin{abstract}
This article examines an infinite-dimensional linear control system that describes population models structured by age, size, and spatial position. The control is localized with respect to space, age and size; an estimate of the time required to bring the system to zero is provided. We demonstrate the null controllability of the model using a technique that avoids the explicit use of parabolic Carleman estimates. Instead, this method combines the observability estimates of the final state with the use of characteristics and estimates of the associated semigroup. Furthermore, we extend this work by highlighting the difference in controllability time for two models with different birth kernels.
\end{abstract}\begin{center}
Keywords Population dynamics, null controllability. \smallbreak
AMS subject classifications. 93B03, 93B05, 92D25
\end{center}

\section*{Introduction}

Mathematical models of populations, whether they incorporating age structure or other continuously varying properties, have a rich history. The pioneering works of Sharpe \cite{sharpe}, Lotka, and McKendrick \cite{mc} in the early 20th century laid the groundwork for this discipline, establishing age-structured population models. Subsequent decades saw rigorus analysis and  development of these models, highlighting the stabilization of age structure in populations with linear mortality and fertility processes. This theory found significant applications in demography, biology, and epidemiology.
However, researchers have recognized that age structure alone often fails to capture the complex dynamics of populations, especially in certain species \cite{b9,o,kk,z}. Individual size and various other physiological and behavioral paramaters can also differentiate cohorts.
Numerous age-size structured models, both linear and nonlinear, have been studied, with seminal treatments provided by Metz and Diekmann \cite{j}, as well as Tucker and Zimmerman \cite{z}. The spatial structure in age or size structured models, both linear and nonlinear, has also been explored, particularly regarding the existence of solutions using the semigroup method \cite{j,b9,z}. Significant results on the null controllability of the Lotka-McKendrick system with or without spatial diffusion, have been obtained \cite{yac3,dy,b1,B1,abst,b6,B}. This growing research in age and size structured models has opened new avenues for understanding and manipulating population dynamics, providing research opportunities in fields such as conservation biology and natural resource management.\smallbreak
In this paper, we are interested in the controllability of population dynamics models depending on age, size, and spatial diffusion, and is structured into three parts. The first part examines the null controllability of the age-size model with a probabilistic fertility function. The second part focuses on an age-size model with degenerate diffusion. Finally, the third part contains the Appendix.
\section{Size-age dependent population dynamics model with probabilistic birth }
\subsection{Presentation of the Model, Assumptions, and Notations}
In this part, we explore the null controllability of a population dynamics model dependent on age, size, and spatial diffusion introduced by Webb \cite{b9}. The model's specificity lies in the fertility function $\beta(a,u,s)$ where $(a,u,s) \in (0,A) \times (0,S)^2$, with $A$ and $S$ denoting the maximal age and size an individual can attain, respectively. This function can be interpreted probabilistically as the likelihood that an individual of age $a$ and size $s$ will give birth to an individual of size $u$.\smallbreak
In models referring back to Webb \cite{b9}, the state space of the system is $K=L^2((0,A)\times(0,S);L^2(\Omega)),$ where $\Omega\subset \mathbb{R}^n$ (with $n\in \mathbb{N}$ in general but $n=3$ for real-life applications) is an open bounded region that represents the variable of the space.\smallbreak
Let $y(x,a,s,t)$ be the density of the distribution of individuals with respect to age, size, and position $x\in \Omega$ and some time $t\in(0,T)\text{, }T>0.$\smallbreak 
According to Webb \cite{b9}, the function $y$ satisfies the partial differential equation.
\begin{equation}
\left\lbrace\begin{array}{ll}
\dfrac{\partial y}{\partial t}+\dfrac{\partial y}{\partial a}+\dfrac{\partial(g(s) y )}{\partial s}-L y+\mu(a,s)y=mu &\hbox{ in }Q ,\\ 
\dfrac{\partial y}{\partial\nu}=0&\hbox{ on }\Sigma,\\ 
y\left( x,0,s,t\right) =y_1(x,s,t)&\hbox{ in } Q_{S,T}\\
y\left(x,a,s,0\right)=y_{0}\left(x,a,s\right)&
\hbox{ in }Q_{A,S};\\
y(x,a,0,t)=0& \hbox{ in } Q_{A,T}.
\end{array}\right.
\label{2}
\end{equation}
where, 
    $$Ly=\sum_{i,j=1}^{n}\dfrac{\partial}{\partial x_i}\left(\sigma_{i,j}\dfrac{\partial y}{\partial x_j}\right)$$ 
    with $\sigma_{i,i}=\sigma_{j,i}\in C^2(\bar{\Omega})$  for $1\leq i,j\leq n$ and there exists a constant $C>0$ such that
$$\sum_{i,j=1}^{n}\sigma_{ij}(x)\xi_i\xi_j>C\|\xi\|_{2}^{2}\quad x\in\bar{\Omega}\hbox{, }\xi\in \R^n;$$
$y_{0}$ is given in $K=L^2((0,A)\times(0,S);L^2(\Omega))$ 
  $Q=\Omega\times (0,A)\times (0,S)\times(0,T)$,
 $\Sigma=\partial\Omega\times(0,A)\times (0,S)\times(0,T)$, $ Q_{S,T}=\Omega\times (0,S)\times (0,T)$, $Q_{A,S}=\Omega\times (0,A)\times (0,S)$ and $Q_{A,T}=\Omega\times (0,A)\times (0,T).$\smallbreak
In addition, the positive function $\mu$ denotes the natural mortality rate of individuals of age $a$ and size $s,$ supposed to be independent of spatial position $x$ and time $t$. The control function is $u$, depending on $x$, $a$, $s$, and $t$, where $m$ is the characteristic function. \smallbreak
We denote by $\beta$ the positive function that describes the fertility rate, which depends on the age $a,$ on size $\hat{s}$ and also depends on the size $s$ of the newborns. The fertility rate $\beta$ is supposed to be independent of spatial position $x$ and time $t,$ so that the density of newly born individuals at the point $x$ at time $t$ of size $s$ is given by
\[y_1(x,s,t)=\displaystyle\int\limits_{0}^{A}\int\limits_{0}^{S}\beta(a,\hat{s},s)y(x,a,\hat{s},t)da d\hat{s}.\]
In these models, size is considered to be an individual-specific variable and can include size, volume, length, maturity, bacterial or viral load, or other physiological or demographic property. The size is assumed to increase the same way for everyone in the population and is controlled by the growth function $g(s).$ The growth modulus is interpreted as \[\int\limits_{\tilde{s}}^{\bar{s}}\frac{ds}{g(s)},\] representing the time required for an individual to increase the size from $\tilde{s}$ to $\bar{s}$,where $0\leq \tilde{s}<\bar{s}\leq S.$  In subsequent sections, we define \( G(s) = \int_{0}^{s} \frac{du}{g(u)} \), establishing a bijection from \([0, S]\) to \([0, G(S)]\) with \( G^{-1} \) denoting its inverse bijection.\smallbreak
The size of individuals also serves to differentiate cohorts. Apart from age, various other factors like body size, dietary requirements, and other physiological and behavioral parameters can be utilized for this purpose. This multifaceted modeling approach is equally applicable in cellular dynamics scenarios.
Age and size-structured population dynamics models, when applied to cellular dynamics, offer essential insights into several biomedical fields. These models improve our understanding of tumor growth and treatment responses in oncology, thereby optimizing anticancer therapies. In regenerative medicine, they help improve the production and transplantation of stem cells for better tissue regeneration. They are also crucial for studying embryonic development and cellular differentiation, providing insight into the regulation of cell specialization processes. In addition, these models allow for the analysis of cellular responses to stress and damage, facilitating the development of strategies to improve cellular resistance and repair mechanisms. These applications demonstrate the importance of these models for significant advances in biology and medicine.\smallbreak
In this work, we examine null controllability in population dynamics models that depend on size, age, and spatial position. The control is localized with respect to these three variables. To achieve null controllability, we impose suitable constraints on the control's support, as well as constraints on size relative to age.\smallbreak
In the following, we assume that the fertility rate $\beta$ and the mortality rate $\mu(a,s) = \mu_1(a) + \mu_2(s)$ satisfy the demographic property. The choice of setting $\mu(a,s) = \mu_1(a) + \mu_2(s)$ is intended to simplify the calculations (see \cite{o}).
All along this paper, we assume that the fertility rate $\beta(\cdot)$ and the mortality rate $\mu(\cdot)$ satisfy the demographic properties:
\begin{mainassumptions}\label{overleaf}
\begin{align*}
(	{\bf H1}):&
	\left\lbrace
	\begin{array}{l}
		\mu_1(a)\geq 0 \text{ for every } a\in (0,A)\\
		\mu_1\in L^{1}\left([0,a^*]\right)\hbox{ for every }\; a^*\in [0,A) \\
\displaystyle\int\limits_{0}^{A}\mu_1(a)da=+\infty
	\end{array} \right.
,\quad
(	{\bf H2}):
	\left\lbrace\begin{array}{l}
		\mu_2(s)\geq 0 \hbox{ for every } s\in (0,S)\\
		\mu_2\in L^{1}\left([0,s^*]\right)\hbox{ for every }\hbox{ } s^*\in [0,A) \\
\displaystyle\int\limits_{0}^{S}\mu_2(s)ds=+\infty
	\end{array} \right.\cr
(	{\bf H3}):&
	\left\lbrace\begin{array}{l}
     g(x)\geq 0\hbox{ for all }x\in [0,S]\hbox{ and }g\in C([0,S]);\\
     \int\limits_{0}^{S}
     \dfrac{ds}{g(s)}<+\infty \hbox{ and  }g'\in L^{\infty} (0,S).
	\end{array}
	\right. \qquad
(	{\bf H4}):
\left\lbrace\begin{array}{l}
\beta(a,\hat{s},s)\in C\left([0,A]\times [0,S]^2\right)\\
      \beta\left(a,\hat{s},s\right)\geq0 \hbox{ in } \left([0,A]\times [0,S]^2\right)\\  
\end{array}
\right. \qquad
\end{align*}
\begin{align*}
    ({\bf H5}):
    \begin{array}{c}
\beta(a,\hat{s},s)=0 \text{ } \forall a\in (0,\hat{a}) \text{ for some } \hat{a}\in (0,A).
    \end{array}
\end{align*}
\end{mainassumptions}
 For more details about the modelling of such system and the biological signi cance of the hypotheses, we refer to Webb \cite{b9}.\smallbreak 
 In addition, we denote by \[Q_1=\omega\times (a_1,a_2)\times (s_1,s_2),\]with $0\leq a_1<a_2\leq A\text{ and }0\leq s_1<s_2\leq S$ and that $\omega\subset \Omega$ is an open set.
We denote by $m$ the characteristic function of $Q,$ and by $m$ the support of the control $u$.\smallbreak
 From the previous data, we denote by \[S^{*}_{1}=\int\limits_{0}^{s_1}\dfrac{ds}{g(s)}\hbox{, }S^{*}_{2}=\int\limits_{s_2}^{S}\dfrac{ds}{g(s)}\hbox{ and }T_0=\max\{S^{*}_{1},S^{*}_{2}\}\] and \[T_1=\max\left\{a_1+S^{*}_{2},S^{*}_{1}\right\}.\] 
Here, $S_{1}^*$ and $S_{2}^*$ represent the time required for an individual to increase the size from $0$ to $s_1$ and from $s_2$ to the maximum size $S,$, respectively.\smallbreak
In this context, \( G(S) \) represents the total time required for an individual to grow from its initial size to its maximum size \( S \). This time is calculated using the growth function \( g(s) \), which describes how the size of individuals evolves over time. Importantly, in this model, time and age are measured on the same scale (e.g., 2 years in time corresponds directly to 2 years in age). The model also considers a \textit{minimum fertility age} \( \hat{a} \), which is the age at which an individual becomes capable of reproducing. If the maximum growth time \( G(S) \) is less than or equal to this minimum fertility age \( \hat{a} \), it implies that no individual will reach the reproductive age. As a result, the \textit{renewal term} in the model, which accounts for births, becomes null. Without this renewal and in the absence of any external influx of individuals (such as migration), the population will gradually decline to extinction, and the state \( y \), representing the population density, will eventually become zero. To prevent this scenario of automatic extinction, the paper imposes the condition \( G(S) > \hat{a} \). This condition ensures that the maximum time required to reach size \( S \) is greater than the minimum fertility age \( \hat{a} \). Consequently, some individuals will be able to reproduce before reaching their maximum size, ensuring continuous population renewal and maintaining its dynamics.
\smallbreak
 \subsection{Main result and comments}
 \subsubsection{Main result}
 Our main result of this part as follows:
\begin{theorem}\label{MainT1}
Assume that $\beta\hbox{, }\mu$ and $g$ satisfy the conditions {\bf H1}-{\bf H2}-{\bf H3}-{\bf H4}-{\bf H5} above.  Assume that $a_1<\hat{a}$, \[S^{*}_{2}<\min\{a_2-a_1,\hat{a}-a_1\}\hbox{ and }S^{*}_{1}<\min\{a_2,\hat{a}\}.\]
Then for every $T>A-a_2+T_1+T_0$ and for every $y_0\in L^2(\Omega\times(0,A)\times (0,S)) $
 there exists a control $u\in L^2(Q_1)$ such that the solution $y$ of $(\ref{2})$ satisfies
\[ y(x,a,s,T)=0\hbox{ a.e }x \in \Omega\hbox{ } a \in (0,A)\hbox{ and } s \in (0,S).\]	
\end{theorem}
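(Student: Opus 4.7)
The plan is to circumvent Carleman estimates on the degenerate parabolic–transport operator and instead combine the method of characteristics in $(a,s,t)$ with the classical internal null controllability of the scalar heat equation on $\Omega$. After the change of variable $r=G(s)$, which straightens the first–order part $\partial_t+\partial_a+g(s)\partial_s$ into $\partial_t+\partial_a+\partial_r$ (up to a bounded zero-order term $-g'(s)$ that can be absorbed into the mortality), the characteristics become straight lines of slope one in $(t,a,r)$, and along each of them the state solves a linear heat equation on $\Omega$ with potential $\mu_1(a)+\mu_2(s)$. Representing $y(\cdot,a,s,T)$ by Duhamel along these characteristics produces three contributions: the initial datum $y_0$ pushed forward and damped by mortality, the renewal $y_1$ integrated back from the age-boundary $a=0$, and the contribution of $mu$.

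By hypothesis ${\bf H5}$, the renewal
\[y_1(x,s,t)=\int_{\hat a}^{A}\int_0^{S}\beta(a,\hat s,s)\,y(x,a,\hat s,t)\,d\hat s\,da\]
only samples ages $a\geq\hat a$, and the three geometric inequalities $a_1<\hat a$, $S_1^{*}<\min\{a_2,\hat a\}$ and $S_2^{*}<\min\{a_2-a_1,\hat a-a_1\}$ are precisely what is needed so that every characteristic issued from $a=0$ enters the control window $\omega\times(a_1,a_2)\times(s_1,s_2)$ and spends a positive amount of time there before reaching the fertile age $\hat a$. This is what allows one to cut the birth feedback by acting only inside that window.

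The construction then proceeds in two stages. In Stage~1, of duration at most $T_1+T_0$, for each fixed $(a,s)$ in the swept characteristic tube one applies the classical null controllability of the scalar heat equation on $\Omega$ with observation set $\omega$ and potential $\mu_1(a)+\mu_2(s)$, producing a fibre control $u(\cdot,a,s,\cdot)$ that drives the fibre state to zero at a prescribed intermediate time; continuity of the parameters and the boundedness of the HUM operator allow one to assemble these fibre controls into a single element of $L^2(Q_1)$. In Stage~2, of duration $A-a_2$, the control is switched off and the remaining mass is transported along characteristics, exiting either through $a=A$ or through $s=S$ within this time, while everything that could have fed $y_1$ beyond the intermediate time has already been annihilated during Stage~1, so that no rebirth occurs and $y(\cdot,\cdot,\cdot,T)=0$.

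The main obstacle is the coordinated bookkeeping that yields the sharp threshold $T>A-a_2+T_1+T_0$. One has to verify measurability and uniform cost of the fibre-wise heat controls in the parameters $(a,s)$ despite the $L^1$-singularity of $\mu_1$ and $\mu_2$ near $A$ and $S$, compatibility with the boundary condition $y(x,a,0,t)=0$, and, most delicately, the closure of the renewal loop: the backward characteristic reaching $a=0$ from any later time at which $y_1$ is evaluated must intersect the controlled tube, so that no phantom birth re-opens the control problem. The decomposition $T=(T_1+T_0)+(A-a_2)$ is exactly the time budget needed for this sweep-then-extinction scheme to succeed.
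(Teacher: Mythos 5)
Your overall strategy (work on the primal system, straighten the transport via $r=G(s)$, and combine characteristics with heat-equation controllability on each fibre) is a legitimately different route from the paper, which never constructs the control directly: it passes to the adjoint system, proves a final-state observability inequality by splitting $q=u_1+u_2$, estimating the non-local renewal term along backward characteristics (Proposition \ref{Prop1}), and invoking the Fursikov--Imanuvilov observability estimate for the heat equation along each characteristic. However, your construction has a structural gap in the staging. Individuals whose age at $t=0$ already exceeds $a_2$ can never be reached by the control (age only increases), and they survive for up to $A-a_2$ units of time while remaining fertile; they therefore keep feeding the renewal term $y_1$ throughout an interval of length roughly $T_0+(A-a_2)$, not just during your Stage~1. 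Their \emph{last} offspring, born near time $T_0+(A-a_2)$, still need up to $T_1$ further units to enter the control window or exit through $s=S$, which forces the control to remain active up to the final time. Consequently the decomposition ``control for $T_1+T_0$, then coast for $A-a_2$ with the control switched off'' cannot work: during your coasting phase new individuals are born that are never acted upon, and the claim that ``everything that could have fed $y_1$ beyond the intermediate time has already been annihilated during Stage~1'' is false. The correct time budget, reflected in the paper's heuristic and in its proof, is $(A-a_2)+T_0$ for the uncontrollable initial cohort to clear while continuously reproducing, plus $T_1$ for its final offspring, with control active on all of $(0,T)$.

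Two further points need repair even if the staging is fixed. First, your assertion that the three geometric hypotheses guarantee that \emph{every} characteristic issued from $a=0$ enters the control window is not what they give: a newborn of size above $s_2$ (or one that crosses $s_2$ before age $a_1$) never meets $(s_1,s_2)$; the hypotheses instead ensure such an individual reaches $s=S$ within time $a_1+S_2^*<\hat a$ and so exits before becoming fertile. This dichotomy (enter the window \emph{or} exit through $s=S$ before age $\hat a$) is exactly what Proposition \ref{Prop1} encodes, and conflating the two branches hides where each of $S_1^*$, $S_2^*$ is actually used. Second, ``continuity of the parameters and the boundedness of the HUM operator'' does not suffice to assemble the fibre controls: the time a characteristic spends inside $\omega\times(a_1,a_2)\times(s_1,s_2)$ degenerates to zero as the birth size approaches the critical values ($\alpha^*$, $\beta^*$ in the paper's notation), and the heat-control cost blows up like $e^{c/\tau}$ as the available window $\tau\to0$. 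The paper handles this by sacrificing thin layers (the $\delta_1,\delta_2,\kappa$ partitions in the appendix proof of Proposition \ref{Prop1}) and by showing the renewal term vanishes identically on the complementary size range; your proposal needs an analogous quantitative device to obtain a control of finite $L^2(Q_1)$ norm.
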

In the case where $g(S) = 0$, $\frac{1}{g}$ is locally integrable on $[0, S[$, and $\int_0^S \frac{ds}{g(s)} = +\infty$, we obtain the following result:
\begin{theorem}\label{omo}
    Assume that $\beta\hbox{, }\mu$ and $g$ satisfy the conditions {\bf H1}-{\bf H2}-{\bf H3}-{\bf H4}-{\bf H5} above.  Assume that $a_1<\hat{a}$, \[S^{*}_{1}<\min\{a_2,\hat{a}\}.\]
Then for every $T>A-a_2+T_0$ and for every $y_0\in L^2(\Omega\times(0,A)\times (0,S)) $
 there exists a control $$u\in L^2(\omega\times (a_1,a_2)\times (s_1,S)\times (0,T))$$ such that the solution $y$ of $(\ref{2})$ satisfies
\[ y(x,a,s,T)=0\hbox{ a.e }x \in \Omega\hbox{ } a \in (0,A)\hbox{ and } s \in (0,S).\]	
\end{theorem}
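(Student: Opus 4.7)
The plan is to mirror the characteristic-plus-observability strategy used for Theorem \ref{MainT1}, adapted to the degenerate case $g(S)=0$. The key structural observation is that since $\int_{0}^{S}\tfrac{ds}{g(s)}=+\infty$, every size-characteristic $s(t)=G^{-1}(G(s_{0})+t)$ starting at $s_{0}<S$ stays strictly below $S$ for all finite $t$, so no mass ever escapes through $s=S$. This is precisely why one may take the control region to extend up to $s=S$, and why the term $S_{2}^{*}$ disappears from the required time bound, leaving $T>A-a_{2}+T_{0}=A-a_{2}+S_{1}^{*}$.

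First, I rewrite the system along characteristics. Under the change of variables $(\alpha,\sigma,t)\mapsto(\alpha+t,\,G^{-1}(G(\sigma)+t),\,t)$, the profile $\phi(x,t)=y(x,\alpha+t,G^{-1}(G(\sigma)+t),t)$ satisfies a parabolic Cauchy problem on $\Omega$ with Neumann boundary, zero-order potential $\mu(\alpha+t,s(t))+g'(s(t))$, and forcing equal to the restriction of $mu$ to the characteristic. Because $s(t)<S$ stays in a compact sub-interval of $[0,S)$ on every finite horizon, the potential lies in $L^{\infty}$ and standard null-controllability of the heat equation with control localized in $\omega$ applies, at a cost depending only on $T$, $\Omega$, $\omega$ and $\|g'\|_{\infty}$.

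Next, I split $y=y^{\mathrm{I}}+y^{\mathrm{II}}$, with $y^{\mathrm{I}}$ solving the equation with initial datum $y_{0}$ and zero birth term, and $y^{\mathrm{II}}$ generated by the birth term with zero initial data. For $y^{\mathrm{I}}$ I classify the characteristics issued from $(a_{0},s_{0})\in(0,A)\times(0,S)$: those that do not meet the control region necessarily satisfy $a_{0}\geq a_{2}-\max\{G(s_{1})-G(s_{0}),0\}\geq a_{2}-S_{1}^{*}$, so they exit through $a=A$ by time $A-a_{0}\leq A-a_{2}+S_{1}^{*}<T$ and vanish without any action; the remaining characteristics cross the control region $\omega\times(a_{1},a_{2})\times(s_{1},S)$ on a time-interval of positive length, during which heat null-controllability along the characteristic drives $\phi$ to zero. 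For $y^{\mathrm{II}}$, the conditions $a_{1}<\hat{a}$ and $S_{1}^{*}<\hat{a}$ are essential: every newborn, of any initial size $s\in(0,S)$, enters the control region at an age strictly less than the minimum fertility age $\hat{a}$, so one can kill each newborn cohort along its characteristic before it ever contributes back to the renewal term. A HUM / fixed-point argument on $L^{2}(Q_{S,T})$ in the spirit of the proof of Theorem \ref{MainT1} then closes the loop.

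The main obstacle is the uniformity of the heat controls as the characteristic label $(a_{0},s_{0})$ varies, particularly as $s_{0}\to S^{-}$ where the characteristics compress. This is handled by noting that the parabolic cost depends only on $\|g'\|_{\infty}$ and not on $s_{0}$, and by selecting the HUM controls measurably in $(a_{0},s_{0})$, so that the assembled $u$ indeed belongs to $L^{2}(\omega\times(a_{1},a_{2})\times(s_{1},S)\times(0,T))$. Combining this measurable family with the fixed-point for the renewal term yields a control in the prescribed space driving $y(\cdot,\cdot,\cdot,T)$ to zero.
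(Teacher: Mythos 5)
The paper's own proof of Theorem \ref{omo} is a one-line reduction: take $s_2=S$ in Theorem \ref{MainT1}, so that $S^*_2=\int_S^S\frac{ds}{g(s)}=0$, the hypotheses involving $S^*_2$ become vacuous, and the control region in size becomes $(s_1,S)$. Your proposal does not use this reduction; instead you rebuild a direct, primal controllability argument (transport to a heat equation along each characteristic, control each characteristic, patch the controls together, then run a fixed point for the birth term). Your opening observation --- that $\int_0^S ds/g=+\infty$ keeps every characteristic strictly below $s=S$, which is why $S^*_2$ drops out --- is correct and is exactly the heuristic behind setting $s_2=S$; but the rest of the sketch has genuine gaps. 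The central one is that the whole difficulty of the problem lives in the renewal term, and you dispose of it with ``a HUM / fixed-point argument \dots closes the loop.'' In the paper's dual framework this step is Proposition \ref{Prop1}, the quantitative estimate of the nonlocal term $\int_0^S\beta(a,s,u)q(x,0,u,t)\,du$, which occupies most of the appendix and is where the hypotheses $a_1<\hat a$ and $S^*_1<\min\{a_2,\hat a\}$ are actually consumed; a primal fixed point on the birth output does not close without an analogous estimate, and you supply none. A second gap is the claimed uniformity of the characteristic-wise heat controls: the relevant parabolic cost is governed not by $\|g'\|_\infty$ but by the length of the time window during which a given characteristic sits inside $\omega\times(a_1,a_2)\times(s_1,S)$, and that window shrinks to zero for characteristics near the boundary of the ``controlled'' set (e.g.\ $a_0\uparrow a_2-(G(s_1)-G(s_0))$), where the cost blows up like $e^{C/\tau}$. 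The paper circumvents this by sacrificing a $\delta$-layer of ages and showing the corresponding part of the solution vanishes at time $T$ by pure transport; your patching argument needs the same device and does not have it.

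There is also a time-accounting problem. The uncontrolled part of the initial population survives, and keeps reproducing, up to time $A-a_2+S^*_1$; a newborn produced near that time still needs $\max\{a_1,\,G(s_1)-G(s_0)\}\le T_1=\max\{a_1,S^*_1\}$ before it enters the control region. So your argument, made rigorous, yields $T>A-a_2+T_0+T_1$ --- which is precisely what the paper's reduction of Theorem \ref{omo} to Theorem \ref{MainT1} with $s_2=S$ delivers --- and not the sharper $T>A-a_2+T_0$ in the statement. If you intend the sharper time you must explain how the late newborn cohorts are absorbed without the extra $T_1$; neither your sketch nor, for that matter, the paper's one-line proof addresses this.
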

The result of Theorem \ref{omo} is obtained by setting  $s_2 = S$  in the main result.
\subsection{Comments}
\textbf{Interpretation of Constraints (see Figure 2-3 below)}\smallbreak
Condition \( S^*_1 < \min\{a_2,\hat{a}\}<\hat{a} \) means that all newborns must reach size \( s_1 \) before age \( \hat{a} \) (reproductive age). This imposes an appropriate choice of the bound \( s_1 \).\smallbreak
Condition \( S^*_2 + a_1 < \hat{a} \) means that all newborns who exceed size \( s_2 \) before age \( a_1 \) must reach maximum size $S$ before the age of fertility $\hat{a}$. This imposes an appropriate choice of the bound \( s_2 \).\smallbreak
An explanation of the control time for this problem is as follows.\smallbreak
\textbf{Interpretation of the result within the context of a positivity constraint from a heuristic perspective (see Figure 1-2):}\smallbreak
In this interpretation, we acknowledge that at \( t = 0 \), individuals of all ages and sizes may be present. We focus on controlling individuals whose age falls between \( a_1 \) and \( a_2 \) and whose size falls between \( s_1 \) and \( s_2 \). The extreme scenario assumes that any individual meeting the control criteria is automatically eliminated (extreme control with a positivity constraint).
For individuals existing before the control period, they may grow without reaching the size and age thresholds for elimination. However, individuals born after the start of the control period will either be eliminated upon reaching the threshold or may not reach the age required for reproduction before perishing (\( S_{1}^* < \max\{a_2, \hat{a}\} \) and \( S_{2}^* < \max\{a_2, \hat{a}\} \)).
Two situations may arise:
\begin{enumerate}
    \item An individual (with size less than \( s_1 \)) born just before the control period may survive for approximately \( G(s_1) \) time units plus \( A - a_2 \) time units before dying. Before their demise, this individual may give birth to offspring of all sizes, who will have at most \( T_1 \) time units before being eliminated or dying. The total lifespan of this individual and their offspring is therefore \( A - a_2 + S_{1}^* + T_1 \).
    \item An individual whose size exceeds \( s_2 \) can live for around \( S_{2}^* \) time units plus \( A - a_2 \) time units before dying. Before their death, this individual may give birth to offspring of all sizes, who will have at most \( T_1 \) time units before being eliminated. The total lifespan of this individual and their offspring is \( A - a_2 + S_{2}^* + T_1 \).
\end{enumerate}
In conclusion, the overarching condition set by these scenarios is \( T > (A - a_2) + T_0 + T_1 \). This condition encapsulates the collective times considered regarding the age constraint, the survival periods before and after control, and the limited lifespan of individuals born during or after the control period in the extreme feedback control mechanism outlined in the provided explanation.\smallbreak
The two figures below highlight two key aspects of the result: \smallbreak
1. The impact of the random size of newborns on the estimated controllability time. Specifically, the size at birth dictates a trajectory that must be followed to reach the control threshold.\smallbreak
2. A heuristic method to estimate the controllability time.\smallbreak
Moreover, the figures illustrate the controllability time in the case of extreme control with positivity constraint.
\begin{figure}[H]
\begin{adjustbox}{center,left}
        \begin{overpic}[scale=0.3]{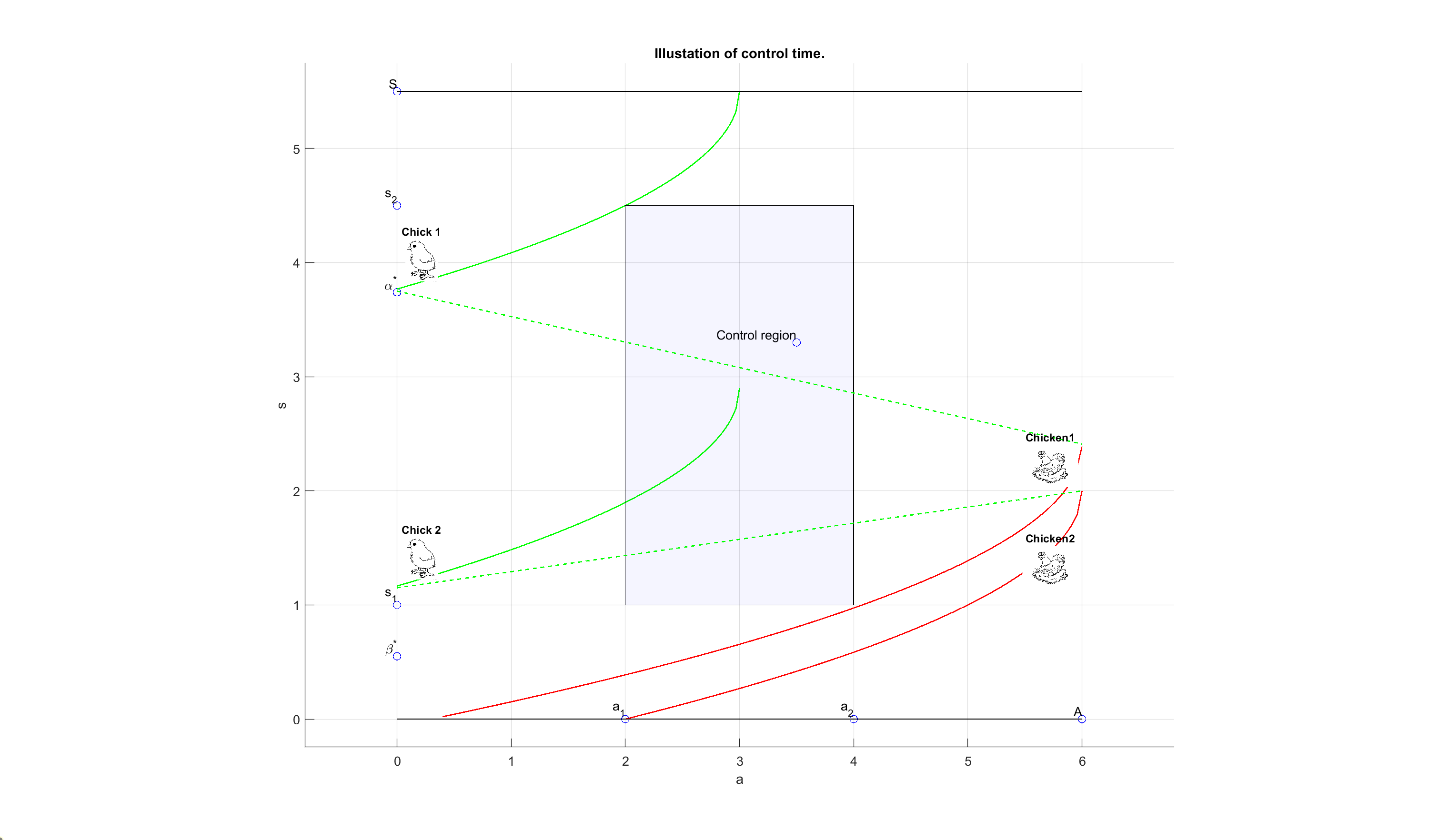}
        \end{overpic}
        \end{adjustbox}
        \caption{ We illustrate here the fact that a chicken at \( t = 0 \) can live for \( S^*_1 + A - a_2 \) time (red trajectories). The fertilization of its eggs can produce chicks of any size between \( 0 \) and \( S \), which will either take \( a_1 + S^*_2 \) or \( S^*_1 \) time to reach the maximum size or the size and age suitable for being eliminated (green trajectories). Thus, the total lifespan of these Chickens and offspring is $T>S^*_1+A-a_2+a_1+S^*_2$ .}
    \end{figure}
    \begin{figure}[H]
    \begin{adjustbox}{center,left}
        \begin{overpic}[scale=0.3]{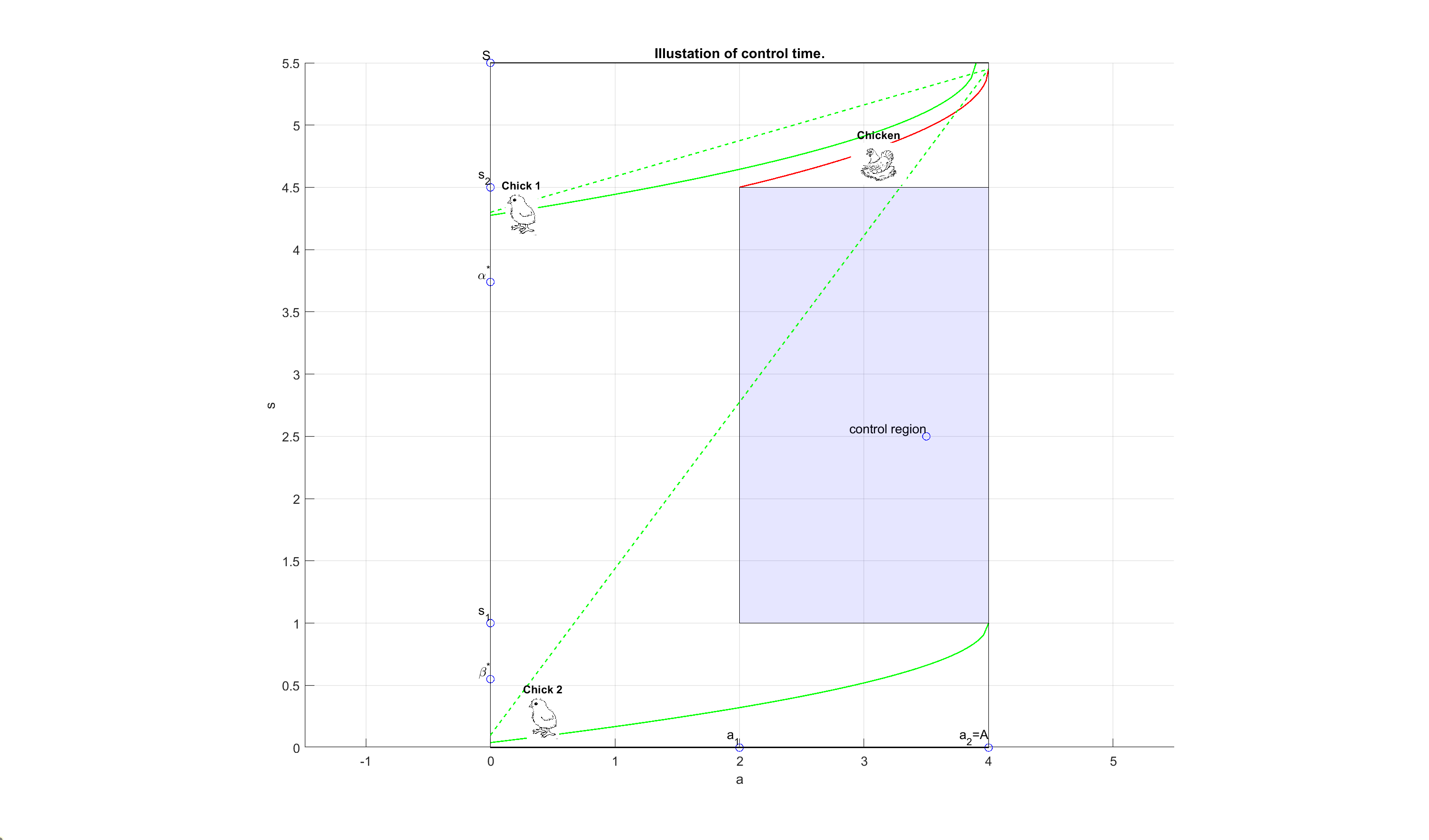}
            
        \end{overpic}
        \end{adjustbox}
    \caption{We illustrate here the fact that a chicken at \( t = 0 \) has a size slightly greater than \( s_2 \) at age \( a_1 \). There is a possibility that it lives approximately \( S^*_2 + A - a_2 \) (red trajectory) (assuming \( A = a_2 \)) time. The fertilization of its eggs can produce chicks of any size between \( 0 \) and \( S \), which will either take \( a_1 + S^*_2 \) or \( S^*_1 \) time to reach the maximum size or the size and age suitable for being eliminated (green trajectories). Thus, the total lifespan of this Chicken and offspring is $T>S^*_1+A-a_2+S^*_2$ or $T>A-a_2+2S^*_1.$}
\end{figure}
\begin{remark}
    It is important to note that the main theorem takes into account all initial conditions \( y_0 \in L^2(Q) \). In the analysis below, we will address a specific case where the initial condition is assumed to be zero over part of \( Q \), which is a special scenario within the broader framework of the theorem.
\end{remark}
\textbf{Analysis of the case where all individuals follow the growth function \(g(s)\) from birth, which imposes a size constraint \(G^{-1}(a) \leq s \leq S\) for individuals aged \(a \in (0, A)\) (refer to Figure 3).}

Assuming that size increases uniformly for all individuals in the population, as determined by the growth function \( g(s) \), the study domain of the model can be restricted to the following:
\[
V = \{(x,a,s,t) \in \Omega \times (0,A) \times (0,S) \times (0,T) \;|\; 0 \leq a \leq A;\; G^{-1}(a) \leq s \leq S;\; 0 \leq t \leq T \}.
\]
This guarantees that the sizes of all individuals remain within the biologically plausible range defined by \( G^{-1}(a) \leq s \leq S \).

As a result, no individual of age \(a\) and size \(s\) can belong to the domain:
\[
W = \{(x, a, s, t) \in \Omega \times (0, A) \times (0, S) \times (0, T) \;|\; 0 \leq a \leq A,\; 0 \leq s < G^{-1}(a),\; 0 \leq t \leq T \}.
\]
The domain \(W\) represents a biologically unfeasible region where the size \(s\) is below the minimum threshold required for an individual of age \(a\). Such cases are excluded to ensure consistency with the growth function \(g(s)\).

Based on the constraints defined in $V,$ we define the control set $Q_2$ to ensure proper management of the system.
\[
Q_2 = \{(x,a,s,t) \in Q \;|\; x \in \omega, \; a_1 \leq a \leq a_2, \; \max\{s_1,G^{-1}(a)\} \leq s \leq s_2, \; 0 \leq t \leq T \},
\]
where \( \omega \) represents the spatial domain, \( [a_1, a_2] \) the range of ages, and \( [\max\{s_1, G^{-1}(a)\}, s_2] \) the size interval. The time \( T \) is determined by the condition:
\[
T > A - a_2 + T_1 + S_2^*,
\]
which ensures that the population reaches the desired state within the required time frame. Importantly, this result is achieved without the need for the condition \( S_1^* < a_2 \). A detailed proof for this scenario will be provided at the end of this part.

Furthermore, if the size of newborns cannot exceed \( s_e \), ensuring that each individual can reach fertility age without surpassing the maximum size \( S \), the domain \( V \) is redefined as:
\[
V = \{(x,a,s,t) \in \Omega \times (0,A) \times (0,S) \times (0,T) \;|\; 0 \leq a \leq A;\; G^{-1}(a) \leq s \leq G^{-1}(a + G(s_e));\; 0 \leq t \leq T \}.
\]
This adjustment reflects the biological constraint on the size of newborns. Correspondingly, the control set is updated as:
\[
Q_3 = \{(x,a,s,t) \in Q \;|\; x \in \omega, \; a_1 \leq a \leq a_2, \; G^{-1}(a) \leq s \leq G^{-1}(a + G(s_e)), \; 0 \leq t \leq T\}.
\]

In this case, the null controllability result can be established without the constraints \( S_2^* < \min\{a_2 - a_1, \hat{a} - a_1\} \) and \( S_1^* < \min\{a_2, \hat{a}\} \). Instead, the condition for the minimum time is:
\[
T > a_1 + A - a_2.
\]
These results illustrate how the model adapts to various biological constraints and control settings, and are visually represented in the figure provided (Figure 3).  
 \begin{figure}[H]
    \begin{adjustbox}{center,left}
        \begin{overpic}[scale=0.24]{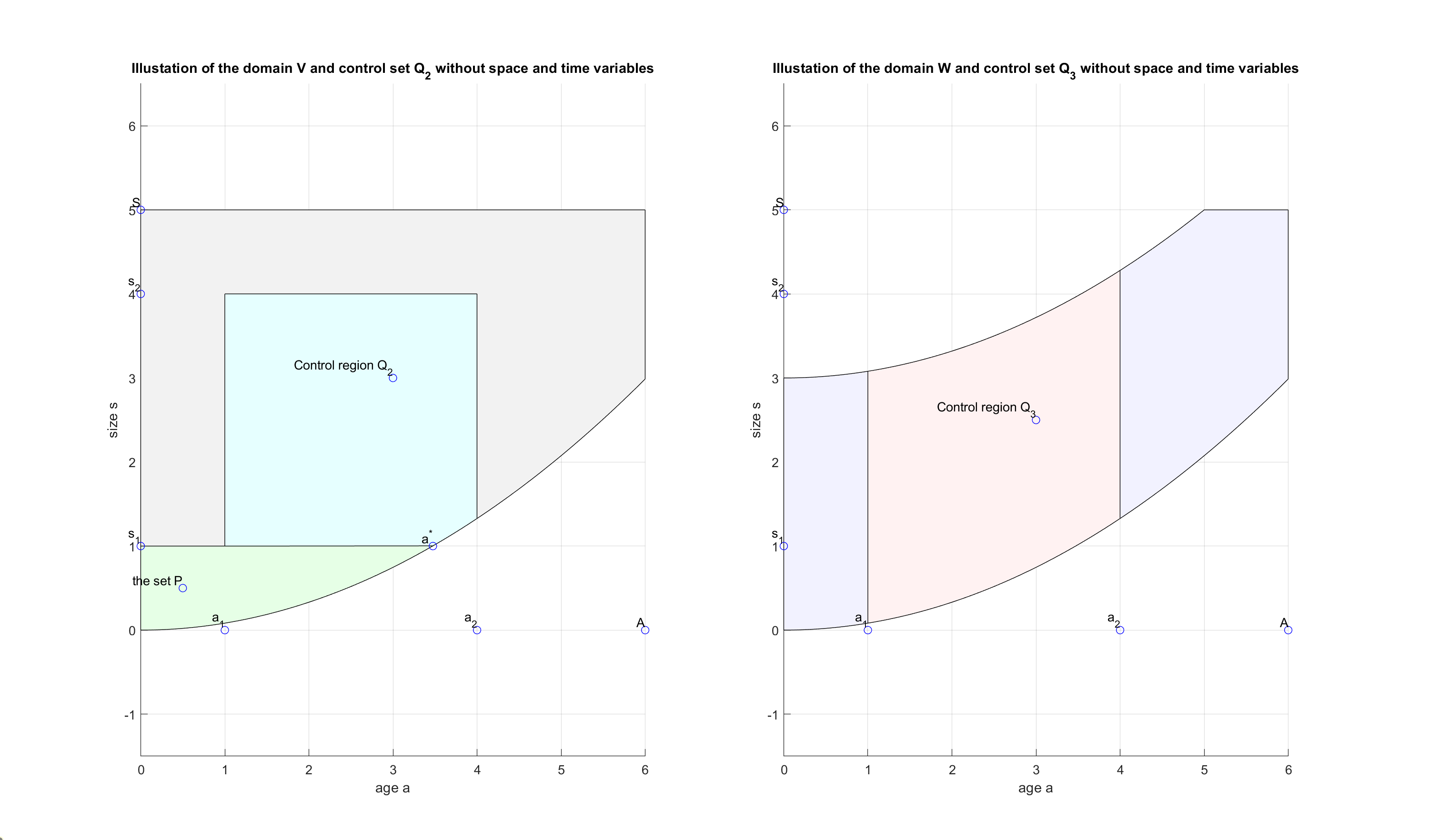}  
        \end{overpic}
        \end{adjustbox}
    \caption{Illustration of the domain of study and control in the case where the initial condition $y_0(x,a,s) = 0$ in the set $W.$ The figure on the right indicates that, in addition to the constraint on the initial condition, the maximal age of newborns is $s_e$.}
\end{figure}
\begin{remark}
\begin{itemize}
 The assumptions {\bf H3} and $a_1<\hat{a}$ are fundamental for the proof of the mains results of this paper.	
\end{itemize}
\end{remark}
Let us now mention some related works from the literature.\smallbreak
The null controllablility of age or size structured population system with spatial diffusion has been addressed in several works \cite{b1,B1,B,b6,dy,abst,yac3}. Indeed, this problem is important because it reflects our ability to control the evolution in time of our target population. The first null controllability result was obtained by B. Ainseba and S. Anita \cite{b1}. In this article, the authors showed that the so-called Lotka-Mckendrick model with spatial diffusion can be driven to a steady state in any positive time, preserving the positivity of the trajectory, provided that the initial data are close to the steady state and the control acts in a spatial subdomain $\omega\subset\Omega$, but for all ages. Recently, Hegoburu and Tucsnak \cite{b3} proved that the same system is null controllable for all ages and in any positive time by controls localized with respect to the spatial variable but active for all ages.  More recently, Maity, Tucsnak and Zuazua \cite{dy} solved the problem of null controllability of a Lotka-Mckendrick model with spatial diffusion, where the control is localized in the space variable as well as with respect to the age. The method combines final-state observability estimates with the use of characteristics and $L^{\infty}$-estimates of the associated semigroup. In \cite{yac2}, using the same method, the author establish the null controllability of a nonlinear Lotka-Mckendrick system with respect to a control localized in the space and age variables. In \cite{yac1}, following the same strategy as in \cite{dy}, Simpor\'e and Traor\'e proved a null controllability result of a nonlinear age, space and two-sex structured population dynamics model.\smallbreak In \cite{Genni}, Fragnelli and Yamamoto studied the null controllability of a degenerate population model structured by age, size, and spatial position. There, by establishing Carleman's type estimates, the authors proves the null controllability of the system \eqref{2}, except for a small interval of ages near zero. It is important to underline that the above null controllability result extend the main result of \cite{BBM} (the model in \cite{BBM} is independent of size $s$).\smallbreak
In this paper, we revisit the work of \cite{Genni} with a control localized in space, age and size. Using the approach developed in \cite{dy}, we establish a sharp estimate of the time $T$ needed to steer the whole population to zero without excluding age and size in a neighborhood of zero, which is not the case with Carleman's estimates. One note that by a direct application of \cite{abst}, we obtain a null controllability result for $T=a_1+A-a_2+2T_0.$  Unlike the strategy used in \cite{abst}, where the size variable is not part of the transport, we use the characteristics according to the three variables, namely time, age and size, thus improving the time to control and while relaxing the condition $T_0<\min\{\hat{a},a_2\}-a_1$.\smallbreak
In contrast to most approaches in the literature, our methodology does not require adaptations of the existing parabolic Carleman estimates to the adjoint system of \ref{2}. We simply combine the characteristics method with existing observability estimates for parabolic equations. Thus, our approach applies independently of the method used to derive final-state observability for the associated parabolic system, whether it be moment methods, local or global Carleman estimates, or the Lebeau-Robbiano strategy.
\subsubsection{The main novelties of the paper}
The main novelties introduced in our paper are as follows:
\begin{itemize}
   \item In this work, we extend the null-controllability analysis of age, size, and spatially-structured population models \cite{simporenull,simpore2022,Genni}. More specifically, we incorporate a probabilistic birth rate with respect to the size variable, while also relaxing constraints on the growth function.
   
   \item We enhance existing estimates regarding the time needed to achieve complete control of the system. Our results demonstrate that our global controllability result is applicable to individuals of all ages, without requiring the exclusion of ages close to zero. Our approach addresses scenarios where control is active for ages $a\in [a_1,a_2]$ and size $s\in [s_1,s_2]$, with arbitrary $a_1<a_2\leq A$ and $0\leq s_1<s_2\leq S,$ provided that $\text{supp} \beta\cap [0,a_1]\neq \emptyset$, thereby eliminating the need to control extremely low ages.
   
   \item By leveraging a recent abstract result by Maity, Tucsnak, and Zuazua, we could have obtained our desired result under the condition \[ T_0 = \max\{S^*_1, S^*_2\} < \max\{a_2, \hat{a}\} - a_1, \] accompanied by the estimated time \[ T > A - a_2 + a_1 + 2T_0 .\] However, we relaxed this condition to \[ S^*_2 < \max\{a_2, \hat{a}\} - a_1 \quad \text{and} \quad S^*_1 < \max\{a_2, \hat{a}\} ,\] refining the control time estimation as well.
   
   \item We emphasize the importance of random newborn size in estimating the control time. Particularly, the estimated control time is extended for models with random birth sizes. In cases where the birth rate at time $t$ is described by \[ y(x,0,s,t)=\int_{0}^{A}\int_{0}^{S}\beta(a,\hat{s},s)y(x,a,\hat{s},t)\,da\,d\hat{s}, \] the estimated time is \[ T>A-a_2+\max\{a_1+S^*_2,S^*_1\}+\max\{S^*_2,S^*_1\} ,\] contrasting with the scenario where the birth rate is \[ y(x,0,s,t)=\int_{0}^{A}\beta(a,s)y(x,a,s,t)\,da, \] resulting in an estimated time of \[ T>A-a_2+a_1+S_1^*+S_2^* ,\] which is more efficient. This efficiency stems from the fact that the initial size aligns with offspring size in this model. The second important result is stated and proved in Part II of the document.
   \item This methodology diverges from conventional approaches in the literature by obviating the need to adapt existing Carleman parabolic estimates to the adjoint system of equation (\ref{2}). Instead, we integrate characteristic methods with established observability estimates for parabolic equations.
   Consequently, this approach is universally applicable regardless of the technique employed to establish final-state observability for the associated parabolic system.
\end{itemize}
\subsubsection{A sketch of the proof}
The proof of the main result boils down to proving the observability inequality of the adjoint system below, which is done in three parts:
\begin{itemize}
    \item First, under the conditions
    \[ S^*_2 < \max\{a_2, \hat{a}\} - a_1 \quad \text{and} \quad S^*_1 < \max\{a_2, \hat{a}\} ,\]  
    and the condition $\text{supp} \beta\cap [0, a_1]\neq \emptyset$, and based on the characteristic curves
    \[\gamma(\lambda)=(t-\lambda,G^{-1}(G(s)+t-\lambda),\lambda); \lambda\in (0,t),\] 
    we estimate the non-local term 
    \[\int_{0}^{S}\beta(a, s, u)q(x, 0, u, t)du.\]
    \item We partition the interval $[0, S]$ as $[0, S] = [0, s_1] \cup [s_1, S]$. Using the characteristic curves, we estimate the state of the adjoint system at the final time $T$ as follows:
    \begin{enumerate}
        \item Under the constraints 
        \[\text{supp} \beta\cap [0, a_1]\neq \emptyset \text{ and } S^*_2 < \max\{a_2, \hat{a}\} - a_1,\]
        we establish an estimate for the term 
        \[\int_{s_1}^{S}\int_{0}^{a_0}\int_{\Omega}q^2(x, a, s, T)dxdads,\] where $a_0$ is appropriately chosen in $(a_1,a_2).$
        \item Under the constraints 
        \[\text{supp} \beta\cap [0, a_1] \neq \emptyset \text{ and } S^*_1 < \max\{a_2, \hat{a}\},\] 
        we establish an estimate for the term 
        \[\int_{0}^{s_1}\int_{0}^{a^*_0}\int_{\Omega}q^2(x, a, s, T)dxdads,\] where $a^*_0$ is appropriately chosen in $(0,a_2).$
    \end{enumerate}
    \item Finally, by combining these results with $L^\infty$ estimates of the associated semigroup, we estimate the state of the adjoint system at the final time $T$ on 
    \[[a_0, A]\times[s_1, S]\hbox{ and }[a^*_0, A]\times [0, s_1],\]
  obtained the desired observability inequality as a consequence of the main result.
\end{itemize}
 \subsubsection{Outline}
The remaining part of this work is organized as follows:
\begin{enumerate}
    \item In Section \ref{Seca},we recall basic facts about the Semigroups in population models structured by age, size, and spatial diffusion, and define the associated adjoint semigroup.
    \item  In Section \ref{MainT1}, we formulate our control problem in a semigroup setting, prove the final state observability for the adjoint system, and consequently, obtain the proof of our main result (Theorem \ref{MainT1}).
    \item In Part II, titled Degenerate Model, we state the second main result (concerning the case of the kernel $$y(x,0,s,t)=\int_{0}^{A}\beta(a,s)y(x,a,s,t)\,da),$$ and provide its proof.
\end{enumerate}
\subsection{Population Models Structured by Age, Size, and Spatial Position Semigroup}\label{Seca}
In this section, we present basic results on the population semigroup for the linear age, size structured model with diffusion, and its adjoint operator. We give the existence of the semigroup in the Hilbert space $K=L^2((0,A)\times(0,S);L^2(\Omega)).$\smallbreak	
We define the following operators:
\[
\mathbf{A_\mu} u := \mu(a,s)u\quad
\mathbf{A_a} u := \frac{\partial u}{\partial a}, \quad 
\mathbf{A_s} u := \frac{\partial}{\partial s} \big(g(s) u \big), \quad
L u := \sum_{i,j=1}^{n}\dfrac{\partial}{\partial x_i}\left(\sigma_{i,j}\dfrac{\partial u}{\partial x_j}\right).
\]

The total operator \(\mathbf{A}\) is defined as:
\[
\mathbf{A} u := \mathbf{A_a} u + \mathbf{A_s} u - L u+\mathbf{A_\mu} u.
\]
The domain of $L$, denoted by \(D(L)\), is defined as:
\[
D(L) = \left\{ u \in H^2(\Omega) \; \middle| \; \frac{\partial u}{\partial \nu} \big|_{\partial \Omega} = 0 \right\},
\]
where \(\nu\) is the outward unit normal vector to \(\partial \Omega\), and the Neumann boundary condition \(\frac{\partial u}{\partial \nu} = 0\) ensures compatibility with the physical model.
The domain of \(\mathbf{A}\), denoted by \(D(\mathbf{A})\), is defined as:
\[
D(\mathbf{A}) = \left\{ u \in L^2(Q_{A,S}; D(L)) \; \middle| \; \frac{\partial u}{\partial s}+\frac{\partial u}{\partial a} +\mu(a,s)u\in L^2(Q_{A,S}; H^1(\Omega)), \; u(x,0,s) = \int_0^S \int_0^A \beta(a, u, s) u(x, a, u) \, da \, du \right\}.
\]
The domain \(Q_{A,S}\) is given by:
\[
Q_{A,S} := (0, A) \times (0, S).
\]
The operator \(\mathbf{A}\) generates a strongly continuous semigroup on the space $L^2(Q_{A,S};L^2(\Omega))$ (see \cite [\hbox{ chapter } 1.4]{b9}  ).
\begin{theorem} \label{resum1}(See \cite{o} and \cite{b9})
Under the assumption {\bf H1}-{\bf H2}-{\bf H3} the population (age-size and diffusion) operator with diffusion $(\mathbf{A},D(\mathbf{A}))$ is the infinitesimal generator of a strongly continuous semigroup $\textbf{U}$ on $K.$	
\end{theorem}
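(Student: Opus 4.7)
The plan is to apply the Hille--Yosida/Lumer--Phillips framework combined with a boundary perturbation argument. I would decompose $\mathbf{A} = \mathbf{A}_0 + \mathbf{B}$, where $\mathbf{A}_0$ is the operator obtained by replacing the nonlocal renewal condition $u(x,0,s) = \int_0^A\int_0^S \beta(a,\hat{s},s)u(x,a,\hat{s})\,d\hat{s}\,da$ by the homogeneous one $u(x,0,s) = 0$, keeping the Neumann condition on $\partial\Omega$, and $\mathbf{B}$ encodes the nonlocal renewal boundary term. This splits the difficulty into (i) generation for a local-boundary problem and (ii) admissibility of the trace-type perturbation.

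For step (i), the diffusion part $-L$ with Neumann boundary condition is, thanks to the uniform ellipticity assumption on $(\sigma_{ij})$, self-adjoint and nonnegative on $L^2(\Omega)$, hence the generator of an analytic semigroup; lifting it pointwise in $(a,s)$ yields a $C_0$-semigroup on $K$. The transport-mortality part $\mathbf{A}_a + \mathbf{A}_s + \mathbf{A}_\mu$ with zero inflow at $a=0$ admits the explicit characteristics $\lambda \mapsto (a+\lambda, G^{-1}(G(s)+\lambda))$, well defined because $g \in C([0,S])$ and $G$ is bijective from $[0,S]$ onto $[0,G(S)]$ by \textbf{H3}; integrability of the mortality along characteristics is guaranteed by \textbf{H1}--\textbf{H2}. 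A direct computation shows that, modulo the singularity at $a=A$ and $s=S$ which is absorbed by the mortality blow-up, the transport semigroup is a contraction. Combining it with the analytic diffusion semigroup, either via a Lumer--Phillips dissipativity check $(\mathbf{A}_0 u,u)_K \le 0$ on smooth functions vanishing at $a=0$, or via a Trotter--Kato product formula, shows that $(\mathbf{A}_0, D(\mathbf{A}_0))$ generates a $C_0$-semigroup on $K$.

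For step (ii), observe that by \textbf{H4} the kernel $\beta$ is continuous and hence bounded on the compact set $[0,A]\times[0,S]^2$, so the renewal operator
$$\mathcal{R}u(x,s) := \int_0^A\!\!\int_0^S \beta(a,\hat{s},s)u(x,a,\hat{s})\,d\hat{s}\,da$$
defines a bounded linear map from $K$ into $L^2(\Omega\times(0,S))$. This is exactly the setting of Greiner/Desch--Schappacher boundary perturbations for evolution equations: the unperturbed generator $\mathbf{A}_0$ has an admissible boundary output (the trace at $a=0$, which is meaningful for elements of $D(\mathbf{A}_0)$ by the characteristic representation), and $\mathcal{R}$ fits into the abstract perturbation framework. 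One concludes that $\mathbf{A} = \mathbf{A}_0 + \mathbf{B}$ still generates a $C_0$-semigroup $\mathbf{U}$ on $K$.

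The main obstacle lies in verifying the range condition for $\mathbf{A}$: given $f \in K$ and $\lambda$ sufficiently large, one must solve $(\lambda I - \mathbf{A})u = f$ together with the nonlocal boundary condition. Using the variation-of-constants formula along characteristics coupled with the resolvent of $-L$, the unknown trace $\varphi(x,s) := u(x,0,s)$ satisfies a linear fixed-point equation of the form $\varphi = \mathcal{R}(\lambda I - \mathbf{A}_0)^{-1}f + \mathcal{R}\mathcal{K}_\lambda \varphi$, where $\mathcal{K}_\lambda$ is a bounded operator whose norm decays as $\lambda \to \infty$ thanks to the exponential decay $e^{-\lambda a}$ picked up along the age characteristic. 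A Neumann series argument then provides a unique solution $\varphi$, and hence $u \in D(\mathbf{A})$, for $\lambda$ large enough. Once the resolvent estimate $\|(\lambda I - \mathbf{A})^{-1}\|_{\mathcal{L}(K)} \leq C/(\lambda - \omega)$ is secured, the Hille--Yosida theorem delivers the desired semigroup $\mathbf{U}$.
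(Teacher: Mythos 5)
The paper does not actually prove Theorem I.2: it is quoted from Chen--Guo \cite{o} and Webb \cite{b9}, and the text simply refers to \cite[Chapter 1.4]{b9}. Your sketch is therefore not competing with an in-paper argument but with the cited literature, and it follows essentially the same route those references take: solve the free-streaming/diffusion problem along the characteristics $\lambda\mapsto(a+\lambda,G^{-1}(G(s)+\lambda))$, and recover the nonlocal birth condition by a fixed-point (renewal/Volterra) argument --- which you package more abstractly as a Greiner/Desch--Schappacher boundary perturbation plus a Neumann series for the trace equation. That packaging is legitimate and arguably cleaner, since the boundedness of $\mathcal{R}:K\to L^2(\Omega\times(0,S))$ is immediate from {\bf H4} and the $e^{-\lambda a}$ decay along the age characteristic gives the required smallness of $\mathcal{R}\mathcal{K}_\lambda$ for large $\lambda$. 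Two small corrections: the transport part is only \emph{quasi}-dissipative, since integrating $u\,\partial_s(g(s)u)$ by parts leaves a term $\tfrac12\int g'(s)u^2$ controlled by $\|g'\|_\infty$ (which {\bf H3} provides precisely for this purpose), so the Lumer--Phillips inequality should read $(\mathbf{A}_0u,u)_K\le \omega\|u\|_K^2$ and the resulting semigroup is a quasi-contraction, not a contraction; and the trace $u(x,0,s)$ on the inflow face $a=0$ needs the standard justification that $\partial_a u+\partial_s(gu)\in K$ yields an $L^2$ trace on the inflow boundary of the $(a,s)$-rectangle (with $s=S$ being outflow because $g\ge 0$, so no condition is imposed there). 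With those adjustments your argument is a correct, self-contained proof of the generation result the paper imports from \cite{o,b9}.
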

It is already established that the null controllability in time $T$ of $(\mathbf{A},\mathbf{B})$ is equivalent to the final-state observability in time $T$ of the pair $(\mathbf{A}^{*},\mathbf{B}^{*})$, where $\mathbf{A}^*$ and $\mathbf{B}^*$ are the adjoint operators of $\mathbf{A}$ and $\mathbf{B}$, respectively (see, for instance, \cite[\hbox{ Section } 11.2]{b8}. For that, it is important to determine the adjoint of the operator $\mathbf{A}$. We obtain by proceeding as in \cite{dy} the following result:		
\begin{proposition}\label{a7}
The domain of the adjoint operator \(\mathbf{A}^*\), denoted \(D(\mathbf{A}^*)\), is given by:
\[
D(\mathbf{A}^*) := 
\left\{
\varphi \in K \; \middle| \; 
\begin{aligned}
& \varphi \in L^2(Q_{A,S}; H^2(\Omega)), \\
& \frac{\partial \varphi}{\partial a} + g(s) \frac{\partial \varphi}{\partial s} + L\varphi - \mu(a, s)\varphi \in K, \\
& \frac{\partial \varphi}{\partial \nu} \Big|_{\partial \Omega} = 0, \\
& \varphi(\cdot, A, \cdot) = 0, \quad \varphi(\cdot, S, \cdot) = 0.
\end{aligned}
\right\}.
\]

Moreover, for every \(\varphi \in D(\mathbf{A}^*)\), the adjoint operator \(\mathbf{A}^*\) is expressed as:
\[
\mathbf{A}^* \varphi = L \varphi 
+ \frac{\partial \varphi}{\partial a} 
+ g(s) \frac{\partial \varphi}{\partial s} 
- \mu(a, s) \varphi 
+ \int_{0}^{S} \beta(a, s, u) \varphi(x, 0, u) \, du.
\]
\end{proposition}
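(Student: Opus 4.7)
The plan is to establish the duality identity
\[
\langle \mathbf{A}u,\varphi\rangle_K = \langle u,\mathbf{A}^*\varphi\rangle_K
\]
for $u\in D(\mathbf{A})$ and $\varphi$ as in the candidate $D(\mathbf{A}^*)$, by performing integration by parts separately in each of the three independent variables $a$, $s$ and $x$. As usual, it is enough to verify the identity on a dense subclass of smooth $u$ and $\varphi$ and then pass to the limit; this reduces the computation to a careful bookkeeping of boundary terms and to the treatment of the non-local birth condition encoded in $D(\mathbf{A})$.

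I would then treat each summand of $\mathbf{A}u=\partial_a u+\partial_s(gu)-Lu+\mu u$ in turn. Integration by parts in $a\in(0,A)$ produces the boundary contribution $-\int_\Omega\int_0^S u(x,0,s)\varphi(x,0,s)\,ds\,dx$, together with a contribution at $a=A$ that is killed by imposing $\varphi(\cdot,A,\cdot)=0$; integration by parts in $s\in(0,S)$ gives $[g(s)u\varphi]_{s=0}^{s=S}$, whose $s=0$ piece vanishes because of the intrinsic boundary condition $u(x,a,0)=0$ of the model and whose $s=S$ piece forces $\varphi(\cdot,\cdot,S)=0$. Two applications of integration by parts in $x$ combined with the Neumann condition $\frac{\partial u}{\partial\nu}=0$ built into $D(L)$ give $\int_\Omega Lu\cdot\varphi\,dx=\int_\Omega u\cdot L\varphi\,dx$ provided we require $\frac{\partial\varphi}{\partial\nu}=0$ on $\partial\Omega$, while the multiplication by $\mu$ is trivially self-adjoint.

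It then remains to handle the non-local term. Plugging the birth condition $u(x,0,s)=\int_0^A\int_0^S\beta(a,\tilde s,s)u(x,a,\tilde s)\,d\tilde s\,da$ into the $a=0$ boundary contribution, applying Fubini (legitimate since $\beta$ is continuous and therefore bounded on the compact set $[0,A]\times[0,S]^2$), and relabeling the dummy variables so as to put the integral in the form $\langle u,\cdot\rangle_K$, yields
\[
-\int_\Omega\int_0^A\int_0^S u(x,a,s)\Bigl[\int_0^S\beta(a,s,u)\varphi(x,0,u)\,du\Bigr]ds\,da\,dx.
\]
Combining this with the bulk contributions of the previous step, and taking into account the sign convention relating the semigroup generator to the operator $\mathbf{A}$ of \eqref{2}, I read off exactly the formula announced for $\mathbf{A}^*\varphi$, as well as the sufficiency of the boundary conditions listed in $D(\mathbf{A}^*)$.

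The main obstacle is not this formal calculation but the sharp identification of $D(\mathbf{A}^*)$: one must show that any $\varphi\in K$ for which the map $u\mapsto\langle\mathbf{A}u,\varphi\rangle_K$ extends continuously to $K$ necessarily enjoys the claimed $H^2$-regularity in $x$, the transport-type regularity in $(a,s)$, and satisfies the three trace conditions $\varphi(\cdot,A,\cdot)=0$, $\varphi(\cdot,\cdot,S)=0$ and $\frac{\partial\varphi}{\partial\nu}\big|_{\partial\Omega}=0$. Following the line of argument of \cite{dy}, this is done by selecting in $D(\mathbf{A})$ sequences of test functions concentrating along each of the boundaries $\{a=A\}$, $\{s=S\}$ and $\partial\Omega$ in turn, and applying the standard trace theory for second-order elliptic and first-order transport operators. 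The non-local boundary condition at $a=0$ is harmless in this step, since it couples only the single slice $\{a=0\}$ to the bulk and leaves enough freedom to probe the remaining boundaries independently.
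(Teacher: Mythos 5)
Your proposal is correct and follows exactly the technique the paper invokes: the paper's own ``proof'' of this proposition is a one-line deferral to the computation of Maity--Tucsnak--Zuazua \cite{dy}, and your duality argument (integration by parts in $a$, $s$ and $x$, absorption of the $a=0$ boundary term through the non-local birth condition via Fubini, and identification of the sharp domain by concentrating test functions along $\{a=A\}$, $\{s=S\}$ and $\partial\Omega$) is precisely that computation carried out explicitly. Your handling of the sign convention and of the $s=0$ boundary term (which vanishes either because $u(x,a,0)=0$ or because $g(0)=0$) is consistent with the statement, so no gap remains.
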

	\begin{proof}
    As demonstrated in \cite{dy}, the same technique can be effectively utilized to establish this proposition.
	 
	\end{proof}
	According to the Proposition \ref{a7} the adjoint system of $(\ref{2})$ is given by: 
\begin{equation}
\left\lbrace\begin{array}{ll}
\dfrac{\partial q}{\partial t}-\dfrac{\partial q}{\partial a}-g(s)\dfrac{\partial q}{\partial s}-L q+\mu(a,s)q=\displaystyle\int\limits_{0}^{S}\beta(a,s,u)q(x,0,u,t)du&\hbox{ in }Q ,\\ 
\dfrac{\partial q}{\partial\nu}=0&\hbox{ on }\Sigma,\\ 
q\left( x,A,s,t\right) =0&\hbox{ in } Q_{S,T} \\
q(x,a,S,t)=0& \hbox{ in } Q_{A,T}\\
q\left(x,a,s,0\right)=q_0(x,a,s)&
\hbox{ in }Q_{A,S};
\end{array}\right.
\label{112}
\end{equation}
Let $Q'$ be the domain $Q$ without the space variable.\smallbreak
We split the domain $Q'$ as follows: \[A_1=\left\{(a,s,t)\in Q' \text{ such that } 0< t< A-a \text{ and } 0< t< G(S)-G(s)\right\},\] \[A'_1=\left\{(a,s,t)\in Q' \text{ such that } G(S)-G(s)>t>A-a>0\text{ or } t>G(S)-G(s)>A-a>0\right\}\] and
\[A'_2=\left\{(a,s,t)\in Q' \text{ such that } A-a>t>G(S)-G(s)>0\text{ or } t>A-a>G(S)-G(s)>0\right\},\]  where \[G(s)=\int_{0}^{s}\frac{du}{g(u)}.\] To simplify, we denote by $A_2=A'_1\cup A'_2,$ then $Q'=A_1\cup A_2$.\smallbreak
Let $R$ be the operator defined in \begin{equation}
  D(\mathbf{A}^*) \text{ by } R\psi=(-\mu(a,s)+L)\psi.\label{33}
  \end{equation}
   The operator $R$ is an infinitesimal generator of a strongly continuous semigroup in $K$ and
we have the following result:\smallbreak
	\begin{proposition}
		For every $q_0\in K, $ under the assumptions {\bf H1}-{\bf H2}-{\bf H3} the system $(\ref{2})$ admits a unique solution $q.$ Moreover integrating along the characteristic lines, the solution $q$ of $(\ref{112})$ is given by:
		\begin{align}
		q(t) =\left\lbrace\begin{array}{l}
			 q_0(.,a+t,G^{-1}(G(s)+t))e^{t R}+\displaystyle\int\limits_{0}^{t}\left(e^{(t-l)R}\displaystyle\int\limits_{0}^{S}\beta(a+t-l,G^{-1}(G(s)+t-\lambda),u)q(x,0,u,l)du)\right)dl \hbox{ in } A_1, \\
			\displaystyle\int\limits_{\max\left\{t-G(S)+G(s),t-A+a\right\}}^{t}\left(e^{(t-l)R}\displaystyle\int\limits_{0}^{S}\beta(a+t-l,G^{-1}(G(s)+t-\lambda),u)q(x,0,u,l)du)\right)dl \hbox{ in } A_2;\label{alp}
		\end{array}\right.
		\end{align}
	\end{proposition}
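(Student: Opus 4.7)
My plan is to first invoke the semigroup framework already set up in the excerpt and then derive the explicit representation along characteristics. By Theorem \ref{resum1}, the operator $\mathbf{A}$ generates a $C_0$-semigroup $\mathbf{U}$ on the Hilbert space $K$; since $K$ is reflexive, the adjoint $\mathbf{A}^{*}$ generates the dual $C_0$-semigroup $\mathbf{U}^{*}$. Hence, for any $q_0\in K$, the Cauchy problem \eqref{112} admits a unique mild solution $q(t)=\mathbf{U}^{*}(t)q_0\in C([0,T];K)$. This settles the existence and uniqueness statement, and the remaining task is to identify this abstract solution with the right-hand side of \eqref{alp}.

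For the representation I would apply the method of characteristics to the transport operator $\partial_t-\partial_a-g(s)\partial_s$. Fix $(a,s,t)\in Q'$ and, following the curve $\gamma(\lambda)=\bigl(\lambda,\, a+t-\lambda,\, G^{-1}(G(s)+t-\lambda)\bigr)$ for $\lambda\in[0,t]$, set $\widetilde q(\lambda):=q(\gamma(\lambda))$ as an $L^{2}(\Omega)$-valued function. Since $G'(s)=1/g(s)$ implies $s'(\lambda)=-g(s(\lambda))$, the chain rule combined with \eqref{112} yields
\[
\widetilde q'(\lambda)=L\widetilde q-\mu\bigl(a(\lambda),s(\lambda)\bigr)\widetilde q+F(\lambda),\qquad F(\lambda):=\int_{0}^{S}\!\beta\bigl(a(\lambda),s(\lambda),u\bigr)\,q(x,0,u,\lambda)\,du.
\]
This is a linear inhomogeneous evolution equation in $L^{2}(\Omega)$ whose principal part is $R$ from \eqref{33} evaluated along the characteristic; since $L$ and multiplication by $\mu(a(\lambda),s(\lambda))$ commute, the associated evolution family collapses to $e^{(t-l)R}$ once the scalar $\mu$-factor is absorbed into the exponential.

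The two branches of \eqref{alp} then correspond to whether or not the backward characteristic reaches the hyperplane $\{\lambda=0\}$ inside the domain. On $A_1$, both $a+t<A$ and $G(s)+t<G(S)$, so $\gamma(0)=(0,\,a+t,\,G^{-1}(G(s)+t))\in\{0\}\times(0,A)\times(0,S)$ and one has $\widetilde q(0)=q_0(\cdot,\,a+t,\,G^{-1}(G(s)+t))$; the variation-of-constants formula between $0$ and $t$ returns the first branch of \eqref{alp}. On $A_2$ the characteristic exits through either $\{a=A\}$ or $\{s=S\}$ at $\lambda^{*}=\max\{t-(A-a),\,t-(G(S)-G(s))\}$, and the boundary conditions $q(\cdot,A,\cdot,\cdot)=0$ and $q(\cdot,\cdot,S,\cdot)=0$ give $\widetilde q(\lambda^{*})=0$, so Duhamel's formula between $\lambda^{*}$ and $t$ produces the second branch.

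The main technical obstacle I anticipate is that $F(\lambda)$ depends on the trace $q(\cdot,0,u,\lambda)$ of the unknown itself, so \eqref{alp} is really a Volterra-type fixed-point identity rather than an explicit formula. To make the characteristic computation rigorous for a merely $L^{2}$ initial datum, I would first carry it out on a dense subset of smooth data in $D(\mathbf{A}^{*})$, then extend by density using the continuity of $\beta$ from assumption \textbf{H4}, and finally close the argument via a Banach fixed-point iteration on $C([0,T];K)$ based on the Volterra-type structure, thereby identifying the solution of the integral equation with the abstract mild solution $\mathbf{U}^{*}(t)q_0$ constructed in the first step.
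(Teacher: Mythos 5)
Your proposal is correct and follows essentially the same route as the paper: both define $w(x,\lambda)=q\bigl(x,\,a+t-\lambda,\,G^{-1}(G(s)+t-\lambda),\,\lambda\bigr)$ along the same characteristic, reduce \eqref{112} to an inhomogeneous evolution equation governed by $R$, and apply Duhamel's formula from $\lambda=0$ on $A_1$ (picking up $q_0$) and from the exit value $\max\{t-A+a,\,t-G(S)+G(s)\}$ on $A_2$ (where the boundary conditions at $a=A$ and $s=S$ annihilate the homogeneous term). Your additional remarks on the Volterra-type self-reference of the trace $q(\cdot,0,u,\lambda)$ and the density/fixed-point closure are sound refinements of rigor rather than a different argument.
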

	where \[e^{tR}=\dfrac{\pi_1(a)}{\pi_1(a-t)}\dfrac{\pi_2(s)}{\pi_2(s-t)}e^{tL} \hbox{ with } \pi_1(a)=\exp\left(-\int\limits_{0}^{a}\mu_1(r)dr\right) \hbox{ and   }\pi_2(s)=\exp\left(-\int\limits_{0}^{s}\mu_2(r)dr\right).\]	
 See the proof in the appendix.
 \subsection{Proof of the Theorem \ref{MainT1}}
\subsubsection{An Observability Inequality}\label{Secb}
 In the context where we exclusively focus on the observability inequality problem without considering space, the desired observability inequality can be interpreted as follows: estimate the minimal time required for all the backward characteristics staring from $(a,s,T)$ enters the observation region or exit \((0, A) \times (0, S) \times (0, T)\) via points \((a, S, t)\), while adhering to the constraint \(a < \min\{a_2, \hat{a}\}\).\smallbreak
As mentioned above, the null controllability of a pair \((\mathbf{A}, \mathbf{B})\) is equivalent to the final-state observability of the pair \((\mathbf{A}^*, \mathbf{B}^*)\); see \cite{b8}. This equivalence is crucial for our analysis, as it allows us to transition from considering controllability to focusing on observability. Recall that the final-state observability of \((\mathbf{A}^*, \mathbf{B}^*)\) is defined as follows:
\begin{definition}
	\cite[\hbox{Definition 6.1.1}]{b8}\smallbreak
	The pair \((\mathbf{A}^*, \mathbf{B}^*)\) is final-state observable in time \(T\) if there exists a \(K_T > 0\) such that 
	\begin{equation}
		\int_{0}^{T} \|\mathbf{B}^* \mathbf{U}_{t}^{*} q_{0}\|^2 \geq K_{T}^{2} \|\mathbf{U}_{T}^{*} q_0\|^2 \quad (q_{0} \in D(\mathbf{A}^*)).
	\end{equation}
\end{definition}
Building on this definition, we will combine the classical observability inequality of the heat equation with the method of characteristics. This combination is essential for establishing the desired observability inequality, as it leverages both the well-established properties of the heat equation and the dynamic behavior of characteristics. By doing so, we aim to derive a robust framework that ensures the observability criteria are met within the specified constraints.
	We consider the following adjoint system of $(\ref{2})$ given by:
\begin{equation}
	\left\lbrace
\begin{array}{ll}
\dfrac{\partial q}{\partial t}-\dfrac{\partial q}{\partial a}-g(s)\dfrac{\partial q}{\partial s}-L q+\mu(a,s)q=\int\limits_{0}^{S}\beta(a,s,\hat{s})q(x,0,\hat{s},t)d\hat{s}&\hbox{ in }Q ,\\ 
\dfrac{\partial q}{\partial\nu}=0&\hbox{ on }\Sigma,\\ 
q\left( x,A,s,t\right) =0&\hbox{ in } Q_{S,T} \\
q(x,a,S,t)=0&\hbox{ in } Q_{A,T}\\
q\left(x,a,s,0\right)=q_0(x,a,s)&\hbox{ in }Q_{A,S}.
\end{array}\right.\label{3}
\end{equation} 
We recall that \[T_0=\max\left\{S^{*}_1,S^{*}_2\right\}\] and \[T_1=\max\left\{a_1+S^{*}_2,S^{*}_1\right\}.\]
Taking into account \cite[\hbox{ Theorem } 11.2.1]{b8},
the result of Theorem \ref{MainT1} is then reduced to the following theorem, which will be proved later.
\begin{theorem}\label{a2}
Under the assumption of Theorem \ref{MainT1}, the pair $(\mathbf{A}^{*},\mathbf{B}^{*})$ is final state-observable for every $T>A-a_2+T_1+T_0.$\smallbreak
In other words, for every $T>A-a_2+T_1+T_0$ there exist $K_T>0$ such that the solution $q$ of (\ref{3}) satisfies \begin{equation}\displaystyle\int\limits_{0}^{S}\int\limits_{0}^{A}\int_{\Omega}q^2(x,a,s,T)dxdads\leq K_T\displaystyle\int\limits_{0}^{T}\int\limits_{s_1}^{s_2}\int\limits_{a_1}^{a_2}\int_{\omega}q^2(x,a,s,t)dxdadsdt \quad \hbox{ for every } q_0\in D(\mathcal{A}^*).
\end{equation}
\end{theorem}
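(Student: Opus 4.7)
The plan is to establish the final-state observability inequality of Theorem \ref{a2} by combining the method of characteristics applied to the adjoint system (\ref{3}) with the classical final-state observability of the heat equation $\partial_t v-Lv=0$ with Neumann boundary conditions on $\omega\subset\Omega$, in the spirit of Maity--Tucsnak--Zuazua \cite{dy}. The new difficulty compared with \cite{dy} is the non-local source $\int_0^S\beta(a,s,u)q(x,0,u,t)\,du$, which couples different size slices through the trace at $a=0$; the overall strategy therefore has three stages mirroring the sketch announced in the introduction.

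\textbf{Stage 1 (bound on the non-local trace term).} Using the representation (\ref{alp}) applied to $q(x,0,u,t)$ along the adjoint characteristic $\gamma(\lambda)=(t-\lambda,\,G^{-1}(G(u)+t-\lambda),\,\lambda)$, $\lambda\in[0,t]$, I would show that $q(x,0,u,t)$ decomposes into contributions from the observation cylinder $Q_1$, the vanishing boundaries $\{a=A\}\cup\{s=S\}$, and the source itself, the latter being killed by assumption {\bf H5} whenever the traced-back age falls below $\hat a$. The conditions $S^*_1<\min\{a_2,\hat a\}$, $S^*_2<\min\{a_2-a_1,\hat a-a_1\}$ together with $a_1<\hat a$ precisely ensure that, for every starting size $u\in[0,S]$, the relevant characteristic either reaches the size window $(s_1,s_2)$ before the age exits $(0,\hat a)$, or exits through $\{s=S\}$ first. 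Combining this decomposition with the classical final-state observability of $\partial_t v-Lv=0$ on parabolic cylinders $\omega\times(\tau_1,\tau_2)\subset\omega\times(0,T)$ yields
\[
\int_0^T\!\int_\Omega\Bigl|\int_0^S\beta(a,s,u)q(x,0,u,t)\,du\Bigr|^2 dx\,dt\ \le\ C\int_0^T\!\int_{Q_1}q^2(x,a',s',t')\,dx\,da'\,ds'\,dt'.
\]

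\textbf{Stages 2--3 (estimate of $q(\cdot,T)$ by region).} Choose $a_0\in(a_1,a_2)$ and $a_0^\ast\in(0,a_2)$ so that $A-a_0+T_1+T_0<T$ and the analogous strict inequality with $a_0^\ast$. For $(a,s,T)$ in the slices $[0,a_0]\times[s_1,S]$ or $[0,a_0^\ast]\times[0,s_1]$, the backward characteristic ending at $(a,s,T)$ either enters $Q_1$ on an interval of positive length (so heat observability in $\omega$ applies), reaches the trace $\{a=0\}$ (controlled by Stage~1), or exits through $\{a=A\}$/$\{s=S\}$ (where $q$ vanishes); substituting into (\ref{alp}) and summing gives an estimate of the corresponding $L^2$-integrals of $q(\cdot,T)$ by $K\int_0^T\!\int_{Q_1}q^2$. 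For the complementary slices $[a_0,A]\times[s_1,S]$ and $[a_0^\ast,A]\times[0,s_1]$, the inequality $A-a\le A-a_0<T-T_1-T_0$ forces the backward characteristic to exit through $\{a=A\}$ or $\{s=S\}$ strictly before time $T-T_1-T_0$, so $(a,s,T)\in A_2$ and (\ref{alp}) expresses $q(x,a,s,T)$ purely as an integral of the non-local source already bounded by Stage~1. The explicit factorization $e^{tR}=\tfrac{\pi_1(a)}{\pi_1(a-t)}\tfrac{\pi_2(s)}{\pi_2(s-t)}e^{tL}$ provides a uniform-in-$(a,s)$ bound on the appropriate time window, and summing all contributions closes the observability inequality.

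\textbf{Main obstacle.} The central technical difficulty is aligning the three distinct time budgets --- $A-a_2$ for ageing out, $T_0=\max\{S^*_1,S^*_2\}$ for size adjustment of a newborn, and $T_1=\max\{a_1+S^*_2,S^*_1\}$ for combined ageing and growth of offspring --- so that \emph{every} point $(a,s)$ at time $T$ is traced back either to the observation set, the vanishing boundaries, or a trace already controlled by Stage~1. This delicate geometric and temporal bookkeeping is precisely what yields the sharp threshold $T>A-a_2+T_1+T_0$ and sharpens the weaker bound $T>a_1+A-a_2+2T_0$ obtainable from the abstract result of \cite{abst}; it also clarifies why the size constraints on $s_1$ and $s_2$ in the hypotheses cannot be relaxed. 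Once this analysis on characteristics is pinned down, the remaining ingredients --- classical heat equation final-state observability and the $L^\infty$ bounds on the semigroup $e^{tR}$ --- combine routinely.
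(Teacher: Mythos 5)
Your overall strategy --- characteristics plus final--state observability of the heat equation plus a separate estimate of the non-local trace --- is the one the paper follows, but two of your steps do not survive closer inspection. First, your Stage~1 inequality is stated with the time integral over all of $(0,T)$; this is false in general. For $t<T_1$ the backward characteristics emanating from $(0,u,t)$ reach $t=0$ (or leave the age window $(a_1,a_2)$) before they can spend any time in the observation cylinder, so $q(x,0,u,t)$ is then determined by the uncontrolled initial datum $q_0$. The paper's Proposition \ref{Prop1} accordingly only bounds $\int_{\eta}^{T}\int_0^S\int_\Omega q^2(x,0,s,t)$ for $\eta>T_1$, and this restriction is not cosmetic: it is precisely why the admissible time contains the summand $T_1$ and why the whole proof must be organised around a cut at an intermediate time $\eta\in(T_1,T)$.

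Second, in Stages 2--3 you propose to conclude by ``substituting into (\ref{alp})'', but the representation (\ref{alp}) expresses $q(\cdot,T)$ only in terms of the initial datum and the non-local source; it contains no term supported in $Q_1$, so for points of $A_1$ whose characteristics reach $t=0$ it cannot by itself produce an estimate by the observation. What is missing is the Duhamel splitting $q=u_1+u_2$ on $(\eta,T)$ used in the paper: $u_1$ solves the homogeneous cascade system \eqref{3a22} with datum $q(\cdot,\eta)$ and is estimated at time $T$ by Proposition \ref{a4} (observability of the source-free system, where the constants stay uniform as the characteristic's sojourn in $(a_1,a_2)\times(s_1,s_2)$ shrinks) together with the vanishing of $u_1(\cdot,T)$ for $a>a_2-S_2^*-\delta$; $u_2$ carries the non-local source and is controlled by the semigroup estimate of Proposition \ref{a5} combined with the trace bound on $(\eta,T)$; finally one writes $u_1=q-u_2$ to return to $q$ in the observation term. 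Your alternative partition at $a_0$ close to $a_2$ does handle the slice $a\in(a_0,A)$ correctly (there the lower limit of the Duhamel integral exceeds $T_1$), but it leaves the slice $a\in(a_1,a_0)$, $s\in(s_1,s_2)$ --- where the source must be subtracted before heat observability can be applied along the characteristic --- without an argument. (A minor further point: backward characteristics of the adjoint \emph{increase} in age, so they never ``reach the trace $\{a=0\}$''; the trace enters only through the source term.)
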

In the following subsection, we will establish the necessary results to prove the observability inequality.
\subsubsection{Necessary Propositions for the Proof of the Observability Inequality}
The principle is based on the estimation of the non-local term $\displaystyle\int\limits_{0}^{S}\beta(a,s,\hat{s})q(x,0,\hat{s},t)d\hat{s}$. Hence, the following result:
  \begin{proposition}\label{Prop1}
		Let the assumptions of Theorem \ref{MainT1} be satisfied and let \[a_1<\hat{a}\hbox{, }S^{*}_2<\min\{\hat{a}-a_1,a_2-a_1\}\hbox{, }S^{*}_1<\min\{a_2,\hat{a}\}\hbox{ and } T_1<\eta<T.\] Then there exists a constant $C_{\eta,T}>0$ dependent on $\eta\hbox{ and }$ such that for every $q_{0}\in K$, the solution $q$ of \eqref{112} satisfies the following inequality:
	\begin{equation}\label{Bq}
		\int\limits_{\eta}^{T}\int\limits_{0}^{S}\int_{\Omega}q^2(x,0,s,t)dxdsdt\leq C_{\eta,T}\displaystyle\int\limits_{0}^{T}\int\limits_{s_1}^{s_2} \int\limits_{a_1}^{a_2}\int_{\omega}q^2(x,a,s,t)dxdadsdt .
	\end{equation}
	More explicitly, 
If $s_1<\alpha^*<s_2$ or $s_1>\alpha^*> 0$, then for every $\eta$, with $T_1<\eta<T$ and for every $\delta>0$ such that $0<\alpha^*-\delta$, there exists a constant $C_{\delta,T}>0$ dependent on $\eta,$ such that for every $q_{0}\in K,$ the solution $q$ of \eqref{112} satisfies the following inequality:
		\begin{equation}\label{Bq12}
			\int\limits_{\eta}^{T}\int\limits_{0}^{\alpha^*-\delta}\int_{\Omega}q^2(x,0,s,t)dxdsdt\leq C_{\delta,T}\displaystyle\int\limits_{0}^{T}\int\limits_{s_1}^{s_2} \int\limits_{a_1}^{a_2}\int_{\omega}q^2(x,a,s,t)dxdadsdt ;
		\end{equation} 
		and, $$q(x,0,s,t)=0\hbox{ a.e. in }\Omega\times (\alpha^*,S)\times \left(S^{*}_{2}+a_1,T\right).$$ 
\end{proposition}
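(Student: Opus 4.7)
The plan is to represent $q(x,0,s,t)$ via the characteristic formula (\ref{alp}) and reduce the estimate to the classical final-state observability of the heat equation with observation set $\omega$. I would fix $s\in(0,\alpha^*-\delta)$ and $t\in(\eta,T)$, and consider the backward characteristic $\lambda\mapsto\gamma_{s,t}(\lambda):=(t-\lambda,\,G^{-1}(G(s)+t-\lambda))$ passing through $(0,s)$ at $\lambda=t$. Setting $\widetilde q(x,\lambda):=q(x,\gamma_{s,t}(\lambda),\lambda)$, differentiating along the flow, and using (\ref{112}), one obtains the parabolic equation
\begin{equation*}
\partial_\lambda\widetilde q-L\widetilde q+\mu(\gamma_{s,t}(\lambda))\,\widetilde q=\int_0^S\beta(\gamma_{s,t}(\lambda),\hat s)\,q(x,0,\hat s,\lambda)\,d\hat s\quad\text{in }\Omega,
\end{equation*}
with Neumann boundary condition, and $\widetilde q(x,t)=q(x,0,s,t)$.

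Next, I would use the geometric hypotheses to locate the set $J(s,t):=\{\lambda\in(0,t):\gamma_{s,t}(\lambda)\in(a_1,a_2)\times(s_1,s_2)\}$. Writing the two conditions $a_1<t-\lambda<a_2$ and $s_1<G^{-1}(G(s)+t-\lambda)<s_2$ as explicit intervals in $\lambda$, the assumptions $S_1^*<\min\{a_2,\hat a\}$, $S_2^*<\min\{a_2-a_1,\hat a-a_1\}$, combined with $t>\eta>T_1$, ensure that $J(s,t)$ is a non-empty open interval whose length is bounded below by a constant $\delta_0>0$, uniformly for $s$ in each of the two slabs $(0,s_1)$ and $(s_1,\alpha^*-\delta)$. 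Applying the classical final-state observability for the heat equation with Neumann boundary on $\Omega\times J(s,t)$ with observation in $\omega$ then yields
\begin{equation*}
\|q(\cdot,0,s,t)\|_{L^2(\Omega)}^2\leq C\!\int_{J(s,t)}\!\int_\omega\widetilde q^{\,2}\,dx\,d\lambda+C\!\int_{J(s,t)}\!\int_\Omega\Big(\int_0^S\beta\,q(x,0,\hat s,\lambda)\,d\hat s\Big)^{\!2}dx\,d\lambda,
\end{equation*}
with $C$ independent of $(s,t)$ thanks to the uniform length bound.

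Finally, I would integrate in $(s,t)\in(0,\alpha^*-\delta)\times(\eta,T)$ and, in the first right-hand term, perform the change of variables $(s,\lambda)\mapsto(a,\tilde s)=(t-\lambda,G^{-1}(G(s)+t-\lambda))$; under ({\bf H3}) the Jacobian $g(\tilde s)/g(s)$ is bounded above and below, and the image lies in $(a_1,a_2)\times(s_1,s_2)$, so this term is controlled by the right-hand side of (\ref{Bq12}). The remaining source term still contains $q(x,0,\hat s,\lambda)$; since $\beta\in L^\infty$ and $\lambda\in(0,T)$, the function $F(t):=\int_0^t\!\int_0^{\alpha^*-\delta}\!\int_\Omega q^{\,2}(x,0,s,\tau)\,dx\,ds\,d\tau$ satisfies a linear Grönwall-type inequality that closes (\ref{Bq12}). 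For the explicit vanishing statement, for $s\in(\alpha^*,S)$ the backward characteristic from $(0,s,t)$ exits $Q$ through $\{s=S\}$ at age $a^*=G(S)-G(s)<S_2^*+a_1<\hat a$, where $q\equiv 0$ by the boundary condition in (\ref{112}); the representation (\ref{alp}) then reduces $q(x,0,s,t)$ to the source integral over $\lambda\in(t-a^*,t)$, whose $\beta$-argument lies in $(0,a^*)\subset(0,\hat a)$, so ({\bf H5}) forces the integrand to vanish, proving $q(x,0,s,t)=0$ on $\Omega\times(\alpha^*,S)\times(S_2^*+a_1,T)$. The main technical obstacle will be the geometric book-keeping needed to ensure $|J(s,t)|\geq\delta_0$ uniformly across the two slabs, which is precisely what forces the thresholds appearing in $T_1$ and $\alpha^*$; the parabolic observability and Grönwall closure are standard, but must be applied with constants independent of $(s,t)$.
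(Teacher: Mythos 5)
Your geometric set-up (backward characteristics $\gamma_{s,t}$, reduction to a parabolic problem in $\lambda$, the uniform window $J(s,t)$, the change of variables with Jacobian $g(\tilde s)/g(s)$) and your proof of the vanishing statement on $\Omega\times(\alpha^*,S)\times(S_2^*+a_1,T)$ both match the paper. The gap is in how you close the main estimate. You carry the nonlocal source $\int_0^S\beta(\gamma_{s,t}(\lambda),\hat s)\,q(x,0,\hat s,\lambda)\,d\hat s$ through the observability inequality and propose to absorb it by a Gr\"onwall argument on $F(t)=\int_0^t\int_0^{\alpha^*-\delta}\int_\Omega q^2(x,0,s,\tau)$. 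This does not close: the source couples to the trace $q(x,0,\hat s,\lambda)$ for \emph{all} $\hat s\in(0,S)$, whereas $F$ only controls $\hat s\in(0,\alpha^*-\delta)$, and the vanishing statement only covers $\hat s\in(\alpha^*,S)$ for $\lambda>S_2^*+a_1$. The slab $\hat s\in(\alpha^*-\delta,\alpha^*)$ and the initial layer $\lambda\le S_2^*+a_1$ for $\hat s\in(\alpha^*,S)$ are controlled by neither, and in an observability inequality you are not allowed to bound them by the initial datum. So the linear Volterra inequality you invoke has an inhomogeneity that is not dominated by the observation term.

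The observation that repairs this --- and that is the linchpin of the paper's proof --- is that hypothesis \textbf{H5} makes the source \emph{identically zero} along the entire portion of the characteristic you actually use. After the harmless reduction $a_2\le\hat a$ (the paper takes $a_2=\hat a$ WLOG), the age coordinate $t-\lambda$ lies in $(a_1,a_2)\subset(0,\hat a)$ on the observation window and in $(0,a_1)\subset(0,\hat a)$ on the terminal propagation segment from $\sup J(s,t)$ to $t$; hence $\beta(t-\lambda,\cdot,\cdot)=0$ there and $w(\lambda)=\tilde q(x,\gamma_{s,t}(\lambda),\lambda)$ satisfies the \emph{homogeneous} Neumann heat equation (after stripping the mortality by the factor $\exp(-\int_0^a\mu_1-\int_0^s\mu_2)$). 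The paper then applies the Fursikov--Imanuvilov observability estimate (its Proposition on the heat equation) directly, with no source and no Gr\"onwall, on a window $(t_0,t_1)=(t-a_1-\delta_1,t-a_1)$ (or $(t-G(s_1+\kappa),t-G(s_1))$ for the small-size slab), together with the $L^2$-contraction from $t_1$ to $t$. Once you insert this remark your argument collapses onto the paper's; without it, the closure as you wrote it is not a valid observability proof.
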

The proof of this fundamental result is provided in the appendix of the document.
\begin{remark}
    We notice that for every $X_0\in [s_1,S)$  and for every $\eta$ such that \[a_1+S^*_2<\eta<T.\] there exists $C_{\eta,T}>0$
    \begin{equation}\label{Bq}
		\int\limits_{\eta}^{T}\int\limits_{X_0}^{S}\int_{\Omega}q^2(x,0,s,t)dxdsdt\leq C_{\eta,T}\displaystyle\int\limits_{0}^{T}\int\limits_{s_1}^{s_2} \int\limits_{a_1}^{a_2}\int_{\omega}q^2(x,a,s,t)dxdadsdt .
	\end{equation} 

\end{remark}
 Therefore, the proof strategy involves estimating the adjoint \( q \) at the final time \( T \) over a portion of the domain \([0, a_0] \times [0, S]\) and expressing \( q(a, s, T) \) in terms of the non-local term \( \int_{0}^{S} g(u)\beta(a, s, u) q(x, 0, u, t) \, du \) (referred to here as the renewal term), the estimation of which is provided by Proposition \ref{Prop1}. In fact, we need these results for the next steps.
\begin{proposition}\label{a3}
	Let us assume the hypothesis of Theorem \ref{a2}. Let, $T>T_1.$ Then for every $q_0\in L^2(\Omega\times (0,A)\times (0,S)),$ the solution $q$ of the system $(\ref{3}),$ obeys
	\begin{equation}
\displaystyle\int\limits_{s_1}^{S}\int\limits_{0}^{a_1}\int_{\Omega}q^2(x,a,s,T)dxdads\leq C_T\displaystyle\int\limits_{0}^{T}\int\limits_{s_1}^{s_2}\int\limits_{a_1}^{a_2}\int_{\omega}q^2(x,a,s,t)dxdadsdt \label{Cbb1}
	\end{equation}
 Moreover, let $a^{*}_0\in (0,\min\{a_2,\hat{a}\}-S^{*}_1),$ then for every $q_0\in L^2(\Omega\times (0,A)\times (0,S)),$ the solution $q$ of the system $(\ref{3}),$ obeys
	\begin{equation}
		\displaystyle\int\limits_{0}^{a^{*}_0}\int\limits_{0}^{s_1}\int_{\Omega}q^2(x,a,s,T)dxdads\leq C_T\displaystyle\int\limits_{0}^{T}\int\limits_{s_1}^{s_2}\int\limits_{a_1}^{a_2}\int_{\omega}q^2(x,a,s,t)dxdadsdt. \label{C8aa}
  \end{equation}
  Let $b_0\in (a^*_0,\min\{a_2,\hat{a}\})$ such that $G(s_1)=b_0-a^*_0;$ then we have in addition to the above
  \begin{equation}
\displaystyle\int\limits_{a^*_0}^{b_0}\int\limits_{G^{-1}(a-a^*_0)}^{s_1}\int_{\Omega}q^2(x,a,s,T)dxdads\leq C_T\displaystyle\int\limits_{0}^{T}\int\limits_{s_1}^{s_2}\int\limits_{a_1}^{a_2}\int_{\omega}q^2(x,a,s,t)dxdadsdt \label{xcxa}
  \end{equation}
\end{proposition}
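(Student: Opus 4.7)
My plan is to use the characteristic representation \eqref{alp} of the adjoint solution $q$ along the backward trajectory $\gamma(\lambda) = \bigl(a + (T-\lambda),\, G^{-1}(G(s) + (T-\lambda)),\, \lambda\bigr)$ emanating from $(a, s, T)$, combined with hypothesis H5 (which forces $\beta(a', \cdot, \cdot) = 0$ for $a' < \hat{a}$) and Proposition \ref{Prop1} (which bounds the trace $q(x, 0, \cdot, t)$ by the observation for $t > T_1$). The $L^2(\Omega)$-boundedness of the semigroup $e^{tR}$ and the continuity of $\beta$ are used via Cauchy--Schwarz to pass from pointwise to integrated bounds.

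To prove \eqref{Cbb1}, fix $(a, s) \in (0, a_1) \times (s_1, S)$. Since $s > s_1$, we have $G(S) - G(s) \leq S_2^* \leq T_0$, so $T - (G(S) - G(s)) > (A - a_2) + T_1 \geq T_1$, placing $(a, s, T)$ in the region $A_2$ with exit time $\tau^* = \max\bigl\{T-(A-a),\, T-(G(S)-G(s))\bigr\} > T_1$. The representation \eqref{alp} reduces to the nonlocal integral on $[\tau^*, T]$, and H5 further restricts the nontrivial integrand to $l \leq T - \hat{a} + a$. Cauchy--Schwarz, together with the boundedness of $\beta$ and $e^{(T-l)R}$, yields
\begin{equation*}
\int_\Omega q(x, a, s, T)^2 \, dx \;\leq\; C \int_{\eta}^{T} \int_0^S \int_\Omega q(x, 0, u, l)^2 \, dx\, du\, dl
\end{equation*}
for any fixed $\eta \in (T_1, \tau^*)$. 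Integrating over $(a, s) \in (0, a_1) \times (s_1, S)$ and applying Proposition \ref{Prop1} delivers \eqref{Cbb1}.

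For \eqref{C8aa} and \eqref{xcxa}, the backward characteristic starts at $s < s_1$. I trace it until it first reaches size $s_1$, at time $\lambda_1 := T - (G(s_1) - G(s))$ and age $a_1' := a + G(s_1) - G(s)$. On the segment $\lambda \in [\lambda_1, T]$ the age $a(\lambda)$ is bounded by $a_1' < a_0^* + S_1^* < \min\{a_2, \hat{a}\}$ (for \eqref{C8aa} this follows from $a \leq a_0^*$; for \eqref{xcxa} from the defining constraint $s > G^{-1}(a - a_0^*)$, i.e., $a - G(s) < a_0^*$, which gives $a_1' < a_0^* + G(s_1) = b_0$). Since $a(\lambda) < \hat{a}$ throughout this segment, H5 kills the nonlocal integrand, and \eqref{alp} collapses to
\begin{equation*}
q(x, a, s, T) \;=\; e^{(T - \lambda_1) R}\, q\bigl(x, a_1', s_1, \lambda_1\bigr).
\end{equation*}
The task thus reduces to estimating $q(\cdot, a_1', s_1, \lambda_1)$. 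Since $\lambda_1 > T_1 \geq S_2^*$, a second application of the characteristic formula \eqref{alp} at the base point $(a_1', s_1, \lambda_1)$, which lies in $A_2$, combined with H5 and Proposition \ref{Prop1}, closes the estimate in both cases.

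The hard part will be the bookkeeping in this last step: for Proposition \ref{Prop1} to apply to the integrals produced by the second characteristic, the lower bound of integration --- essentially $\lambda_1 - S_2^*$ --- must still exceed $T_1$. Verifying this requires a careful combination of the hypotheses $T > A - a_2 + T_1 + T_0$, $a_0^* + S_1^* < \min\{a_2, \hat{a}\}$, $S_2^* < \min\{a_2 - a_1, \hat{a} - a_1\}$, and $a_1 < \hat{a}$, and may necessitate splitting the region into subcases according to the relative ordering of $a_0^* + S_1^*$ and $a_1$. The tightness of the sharp time threshold $T > A - a_2 + T_1 + T_0$ is precisely what makes this two-step characteristic argument go through.
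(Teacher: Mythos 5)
Your reduction of everything to the nonlocal term $q(x,0,\cdot,\cdot)$ and Proposition \ref{Prop1} does not cover the whole range of $(a,s)$, and the step that breaks is the very first inequality. For \eqref{Cbb1} you claim that $s>s_1$ implies $G(S)-G(s)\le S_2^*$; this is false for $s\in(s_1,s_2)$, where $G(S)-G(s)=\int_s^S\frac{du}{g(u)}>\int_{s_2}^S\frac{du}{g(u)}=S_2^*$ and can be arbitrarily large. Consequently, for $a$ near $0$ and $s\in(s_1,s_2)$ the exit time $\tau^*=\max\{T-(A-a),\,T-(G(S)-G(s))\}$ need not exceed $T_1$ --- indeed $(a,s,T)$ need not lie in $A_2$ at all, so the representation \eqref{alp} may contain the uncontrollable initial-data term $q_0$; and even when it does lie in $A_2$, the nonlocal integral extends over times $l\le T_1$ where Proposition \ref{Prop1} gives no information (hypothesis H5 only truncates the integral from above, at $l\le T+a-\hat a$, not from below). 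The same defect reappears in your second step for \eqref{C8aa} and \eqref{xcxa}: the base point $(a_1',s_1,\lambda_1)$ has exit time $\max\{\lambda_1-(A-a_1'),\,\lambda_1-(G(S)-G(s_1))\}$, and $G(S)-G(s_1)=S_2^*+(G(s_2)-G(s_1))$ is not $S_2^*$, so the ``bookkeeping'' you defer to the end is not a matter of combining the hypotheses more carefully: the quantity you propose to bound below by $T_1$ is not the actual lower limit of integration, and the true lower limit cannot in general be pushed above $T_1$.

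The missing idea is that these characteristics are controlled not through the renewal term but because their $L^2$-mass is captured by the observation region $(a_1,a_2)\times(s_1,s_2)\times\omega$ itself. The paper proves \eqref{Cbb1} and \eqref{C8aa} by viewing \eqref{3} as an age-structured abstract system whose ``spatial'' component is the size-diffusion operator on $(s_1,S)\times\Omega$, showing that this component is final-state observable in time $S_2^*$ with observation in $(s_1,s_2)\times\omega$ (Theorem \ref{resum}), and invoking Proposition 3.5 of \cite{abst}; this correctly accounts both for characteristics that exit through $s=S$ (zero boundary condition) and for those that remain inside for a long time. For \eqref{xcxa} the paper follows the characteristic from $(a,s,T)$ into the window where its age lies in $(a_2-\delta,a_2)$ and its size in $(s_1,s_2)$, and applies the parabolic observability estimate of Proposition \ref{a1} there directly. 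Your first reduction for \eqref{C8aa}/\eqref{xcxa} --- killing the nonlocal term while the age stays below $\hat a$ and writing $q(x,a,s,T)=e^{(T-\lambda_1)R}q(x,a_1',s_1,\lambda_1)$ --- is correct and consistent with the paper, but the second step must send the characteristic into the control region rather than to the renewal boundary.
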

The proof of this proposition is provided in detail in the appendix.
Now, consider the following cascade system:\smallbreak
\begin{equation}
	\left\lbrace
	\begin{array}{ll}
		\dfrac{\partial q}{\partial t}-\dfrac{\partial q}{\partial a}-g(s)\dfrac{\partial q}{\partial s}-L q+\mu(a,s)q=0&\hbox{ in }Q ,\\
		\dfrac{\partial q}{\partial\nu}=0&\hbox{ on }\Sigma,\\ 
		q\left( x,A,s,t\right) =0&\hbox{ in } Q_{S,T} \\
		q(x,a,S,t)=0&\hbox{ in } Q_{A,T}\\
		q\left(x,a,s,0\right)=q_0(x,a,s)&\hbox{ in }Q_{A,S}.
	\end{array}\right.\label{3a22}
\end{equation} 
We also need the following result for the Proof of the Theorem \ref{a2}.
\begin{proposition}\label{a4}
We assume the assumptions of Theorem \ref{a2}. Let $T>S^{*}_2$ and $a_1<a_0<a_2-S^{*}_2.$ There exists $C_T>0$ such that the solution $q$ of the system $(\ref{3a22})$ verifies the following inequality.
\begin{equation}
	\displaystyle\int\limits_{s_1}^{S}\int\limits_{a_1}^{a_0}\int_{\Omega}q^2(x,a,s,T)dxdads\leq C_T\displaystyle\int\limits_{0}^{T}\int\limits_{s_1}^{s_2}\int\limits_{a_1}^{a_2}\int_{\omega}q^2(x,a,s,t)dxdadsdt \label{Ccca}
\end{equation}
\end{proposition}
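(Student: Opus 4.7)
The plan is to combine the method of characteristics with the classical final-state observability estimate for the Neumann heat semigroup. Because \eqref{3a22} has no renewal term, the representation formula \eqref{alp} reduces, along any backward characteristic that remains inside $(0, A) \times (0, S)$, to the Neumann heat semigroup acting on a translate of the initial datum times a mortality factor; if the characteristic from $(a, s, T)$ exits through $\{a = A\}$ or $\{s = S\}$ before reaching $t = 0$, then $q(x, a, s, T) = 0$ by the homogeneous boundary data. Setting $s^* := G^{-1}(G(S) - T)$, the hypothesis $T > S^*_2 = G(S) - G(s_2)$ gives $s^* < s_2$, and for every $s \in (s^*, S]$ the backward characteristic reaches $\{s = S\}$ at the positive time $T - (G(S) - G(s)) > 0$, so $q(\cdot, a, s, T) = 0$. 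Hence the left-hand side of \eqref{Ccca} reduces to the integral over $(a, s) \in (a_1, a_0) \times (s_1, s^*)$.

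On this reduced range one has the uniform lower bound $G(s_2) - G(s) \geq G(s_2) - G(s^*) = T - S^*_2 > 0$, which, combined with $a_2 - a \geq a_2 - a_0 > S^*_2 > 0$ from $a_0 < a_2 - S^*_2$, justifies the choice of a single admissible backward time $\tau_0 := \tfrac{1}{2}\min\{a_2 - a_0,\ T - S^*_2\} > 0$ such that the segment $\{(a + \lambda,\, G^{-1}(G(s) + \lambda)) : \lambda \in (0, \tau_0)\}$ lies entirely in the control window $(a_1, a_2) \times (s_1, s_2)$ for every $(a, s) \in (a_1, a_0) \times (s_1, s^*)$.

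For such $(a, s)$ I set $p(x, \sigma) := q(x,\, a + T - \sigma,\, G^{-1}(G(s) + T - \sigma),\, \sigma)$ on $\sigma \in [T - \tau_0, T]$; along the characteristic $p$ satisfies $\partial_\sigma p = Lp - \mu(a(\sigma), s(\sigma))\,p$ with $\mu$ independent of $x$, and rescaling by the $x$-independent exponential factor $\exp\!\bigl(\int \mu\bigr)$ reduces this to the pure Neumann heat equation. The classical final-state observability of the latter on an interval of length $\tau_0$ with observation on $\omega$, together with the boundedness of the rescaling factor (the characteristic stays in a compact subset of $\{a < A,\ s < S\}$ where $\mu$ is bounded), yields a constant $K_{\tau_0} > 0$, uniform in $(a, s)$, such that
\begin{equation*}
\int_\Omega q^2(x, a, s, T)\,dx \;\leq\; K_{\tau_0}\int_{T - \tau_0}^{T}\int_\omega q^2\bigl(x,\, a + T - \sigma,\, G^{-1}(G(s) + T - \sigma),\, \sigma\bigr)\,dx\,d\sigma.
\end{equation*}

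Integrating this pointwise estimate over $(a, s) \in (a_1, a_0) \times (s_1, s^*)$, applying Fubini, and performing at each fixed $\sigma \in (T - \tau_0, T)$ the change of variables $(a, s) \mapsto (a', s') := (a + T - \sigma,\, G^{-1}(G(s) + T - \sigma))$ of Jacobian $g(s')/g(s)$ (uniformly bounded on $[s_1, s_2]$ by continuity and strict positivity of $g$), and noting that the image under this change of variables lies in $(a_1, a_2) \times (s_1, s_2)$ by the admissibility established above, lands the integration on a subset of $\omega \times (a_1, a_2) \times (s_1, s_2) \times (0, T)$ and yields \eqref{Ccca}. The principal obstacle is the uniform admissibility of $\tau_0$: the naive constraint $\lambda < G(s_2) - G(s)$ degenerates as $s \to s_2^-$ and would drive the heat-observability constant to infinity; it is precisely the vanishing of $q(\cdot, a, s, T)$ on $(s^*, S)$, powered by $T > S^*_2$, that restricts the $s$-integration to $(s_1, s^*)$ and keeps this constraint uniformly nondegenerate.
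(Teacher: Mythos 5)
Your proof is correct, and it takes a genuinely different route from the paper. The paper disposes of Proposition \ref{a4} in one line by invoking Proposition 3.6 of \cite{abst} applied to the adjoint system restricted to $\Omega\times(0,A)\times(s_1,S)\times(0,T)$: there the age variable alone carries the transport, and the ``diffusion'' block is the joint $(x,s)$-operator $-\partial_s(g\,\cdot)+L$, whose final-state observability in time $>S^*_2$ is what Theorem \ref{resum} supplies. You instead run the characteristics through $(a,s,t)$ simultaneously and reduce directly to the Neumann heat equation in $x$, which is self-contained, matches the strategy the paper uses in its appendix for Propositions \ref{Prop1} and \ref{a3}, and makes the geometry explicit: the vanishing of $q(\cdot,a,s,T)$ for $s>s^*=G^{-1}(G(S)-T)$ is exactly what keeps the backward time window $\tau_0$ uniformly nondegenerate, and your conditions $T>S^*_2$ and $a_0<a_2-S^*_2$ are used precisely where the statement requires them. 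Two small points of rigor, neither of which is held to a higher standard by the paper itself: the mortality $\mu_1,\mu_2$ is only locally $L^1$, not bounded, so the clean way to remove it is the substitution $\tilde q=q\,\pi_1(a)\pi_2(s)$ at the level of the PDE (as in the paper's appendix), after which only the finiteness of $\int_0^{a_0+\tau_0}\mu_1$ and $\int_0^{\bar s}\mu_2$ on the compact range swept by the characteristics is needed; and the lower bound on $g$ over $[s_1,s_2]$ used for the Jacobian $g(s')/g(s)$ is not literally in {\bf H3} (which only asserts $g\geq 0$), but the paper relies on it implicitly throughout.
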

\begin{proof}{Proposition \ref{a4} }\smallbreak
By adopting the same strategy as above, the results of Proposition 3.6. of \cite{abst} considering the adjoint system on $\Omega\times(0,A)\times (s_1,S)\times (0,T)$ give the inequality (\ref{Ccca}).
\end{proof}
Let us now give preliminary results for the proof of Theorem \ref{a2}.\smallbreak
Let $q=u_1+u_2$ where $u_1$ and $u_2$ verify
\begin{equation}
	\left\lbrace
	\begin{array}{ll}
		\dfrac{\partial u_1}{\partial t}-\dfrac{\partial u_1}{\partial a}-g(s)\dfrac{\partial u_1}{\partial s}-L u_1+\mu(a,s)u_1=0&\hbox{ in }Q_{A.S}\times (\eta,T) ,\\ 
		\dfrac{\partial u_1}{\partial\nu}=0&\hbox{ on }\partial\Omega\times (0,A)\times(0,S)\times (\eta,T),,\\ 
		u_1\left( x,A,s,t\right) =0&\hbox{ in } \Omega\times(0,S)\times (\eta,T) \\
		u_1(x,a,S,t)=0&\hbox{ in } Q_{A,T}\\
		u_1\left(x,a,s,\eta\right)=q_{\eta}&\hbox{ in }Q_{A,S}.
	\end{array}\right.\label{322aa}
\end{equation} 
where $q_{\eta}=q(x,a,s,\eta)$ in $Q_{A.S}$ and
\begin{equation}
	\left\lbrace
	\begin{array}{ll}
		\dfrac{\partial u_2}{\partial t}-\dfrac{\partial u_2}{\partial a}-g(s)\dfrac{\partial u_2}{\partial s}-L u_2+\mu(a,s)u_2=\int\limits_{0}^{S}\beta(a,s,\hat{s})q(x,0,\hat{s},t)d\hat{s}&\hbox{ in }Q_{A.S}\times (\eta,T) ,\\ 
		\dfrac{\partial u_2}{\partial\nu}=0&\hbox{ on }\partial\Omega\times (0,A)\times(0,S)\times (\eta,T),\\ 
		u_2\left( x,A,s,t\right) =0&\hbox{ in } \Omega\times(0,S)\times (\eta,T) \\
		u_2(x,a,S,t)=0&\hbox{ in } Q_{A,T}\\
		u_2\left(x,a,s,\eta\right)=0&\hbox{ in }Q_{A,S}.
	\end{array}\right.\label{322bb}
\end{equation}
where $V(x,a,s,t)=\int\limits_{0}^{S}\beta(a,s,\hat{s})q(x,0,\hat{s},t)d\hat{s}.$\smallbreak 
Using Duhamel's formula, we can write \[u_2(x,a,s,t)=\int\limits_{\eta}^{t}\mathbb{T}_{t-f}V(.,.,.,f)df\] where $\mathbb{T}$ is the semigroup generated by the operator $\mathbf{A}^{*}.$ Furthermore, the solution $u_2$ of the system $(\ref{322bb})$ verifies the following estimates:
\begin{proposition}\label{a5}
	Under the assumptions $(H_1)$, $(H_2)$ and $(h_1)$, there exists $C=e^{\frac{T}{2}\left(1+\|\frac{g'}{2}\|_{\infty}\right)}\|\beta\|^{2}_{\infty}A$ such that the solution $u_2$ of the system $(\ref{322bb})$ verifies the following estimate.
	\begin{equation}
	\int\limits_{\eta}^{T}\int\limits_{s_1}^{s_2}\int\limits_{a_1}^{a_2}\int_{\omega}u_{2}^2(x,a,s,t)dxdadsdt\leq \int\limits_{\eta}^{T}\int\limits_{0}^{S}\int\limits_{0}^{A}\int_{\Omega}u_{2}^2(x,a,s,t)dxdadsdt\leq C \int\limits_{\eta}^{T}\int\limits_{0}^{S}\int_{\Omega}q^2(x,0,s,t)dxdsdt.	
	\end{equation}
\end{proposition}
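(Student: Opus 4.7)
The first inequality is immediate from the inclusion $(s_1,s_2)\times(a_1,a_2)\times\omega\subset(0,S)\times(0,A)\times\Omega$ and the positivity of $u_2^2$. The substantive claim is the second, and my plan is to combine the Duhamel representation $u_2(t)=\int_\eta^t \mathbb{T}_{t-f}V(\cdot,f)\,df$ already noted in the text with (i) a crude $L^2(K)$-bound on the semigroup $\mathbb{T}$ generated by $\mathbf{A}^*$, and (ii) a pointwise control of $V(x,a,s,t)=\int_0^S\beta(a,s,\hat s)q(x,0,\hat s,t)\,d\hat s$ in terms of the trace $q(\cdot,0,\cdot,\cdot)$.

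For (i), I would test the homogeneous adjoint equation against its own solution $w$ and integrate over $\Omega\times(0,A)\times(0,S)$. Integration by parts in $a$ yields $+\tfrac12\int_\Omega\int_0^S w^2(x,0,s,t)\,ds\,dx$ using $w(\cdot,A,\cdot)=0$; integration by parts in $s$ yields $+\tfrac12 g(0)\int_\Omega\int_0^A w^2(x,a,0,t)\,da\,dx+\tfrac12\int g'(s)w^2$ using $w(\cdot,\cdot,S)=0$ together with $g(0)\ge 0$ from \textbf{H3}; the elliptic contribution $-\int wLw$ is nonnegative by ellipticity of $(\sigma_{ij})$ combined with the Neumann condition $\partial_\nu w=0$; and $\int\mu w^2\ge0$. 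Discarding the nonnegative terms and estimating $g'\ge-\|g'\|_\infty$ leaves $\tfrac{d}{dt}\|w\|_K^2\le\|g'\|_\infty\|w\|_K^2$, which by Gronwall gives $\|\mathbb{T}_t\|_{K\to K}\le e^{ct}$ for some $c$ matching the exponent in the stated constant (an adjustable Cauchy--Schwarz splitting below recovers the precise $(1+\|g'/2\|_\infty)$ factor).

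For (ii), a direct Cauchy--Schwarz in the definition of $V$ gives $|V(x,a,s,t)|^2\le\|\beta\|_\infty^2 S\int_0^S q^2(x,0,\hat s,t)\,d\hat s$, whence $\|V(f)\|_K^2\le\|\beta\|_\infty^2 AS^2\int_\Omega\int_0^S q^2(x,0,s,f)\,ds\,dx$. Combining (i) and (ii) through Duhamel followed by Cauchy--Schwarz in the time integral produces $\|u_2(t)\|_K^2\le(T-\eta)e^{2cT}\int_\eta^T\|V(f)\|_K^2\,df$; integrating once more in $t\in[\eta,T]$ and inserting the pointwise bound on $\|V\|_K^2$ yields the stated inequality after absorbing the remaining numerical factors ($S^2$, $(T-\eta)^2$, etc.)\ into $C$.

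The only mildly delicate point I anticipate is verifying that the two boundary-type terms at $a=0$ and $s=0$ in the energy identity have favourable signs: the first is automatic since $w^2\ge 0$, and the second relies on $g(0)\ge 0$ from \textbf{H3}, so that no trace estimate or compatibility condition at $s=0$ is required. Beyond this, the argument is essentially a textbook semigroup/Duhamel computation and I do not foresee a serious obstacle.
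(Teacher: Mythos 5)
Your proposal is correct, and it rests on exactly the same integration-by-parts computation as the paper's proof: the favourable signs of the boundary terms at $a=0$ (from $w(\cdot,A,\cdot)=0$) and at $s=0$ (from $w(\cdot,\cdot,S)=0$ and $g(0)\ge 0$), the nonnegativity of $-\int wLw$ under the conormal Neumann condition, and the $\tfrac12\int g'w^2$ term controlled by an exponential factor. The only difference is packaging: the paper performs the energy estimate directly on the inhomogeneous equation after the substitution $u_2=\hat u_2e^{\lambda t}$ with $\lambda=\tfrac32+\tfrac12\|g'\|_\infty$, absorbing the source $V$ by Young's inequality, whereas you first extract an operator-norm bound $\|\mathbb{T}_t\|_{K\to K}\le e^{ct}$ for the homogeneous (local) part and then conclude by Duhamel plus Cauchy--Schwarz in time. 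Both yield the claimed estimate up to the value of the constant; neither your constant nor the paper's own computation (which produces $e^{\frac32 T}\|\beta\|_\infty^2A$, and in fact both arguments pick up additional harmless factors of $S$ and $T-\eta$) reproduces the constant literally displayed in the statement, but this is immaterial for the observability argument in which the proposition is used.
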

\begin{proof}{Proof of Proposition \ref{a5}.}
We denote by $$u_2=\hat{u}_{2}e^{\lambda t}\hbox{ with }\lambda>0.$$ The function $u_2$ verifies \begin{equation}\dfrac{\partial \hat{u}_2}{\partial t}-\dfrac{\partial \hat{u}_2}{\partial a}-g(s)\dfrac{\partial \hat{u}_2}{\partial s}-\Delta \hat{u}_2+(\mu(a,s)+\lambda)\hat{u}_2=e^{-\lambda t}\int\limits_{0}^{S}\beta(a,s,\hat{s})q(x,0,\hat{s},t)d\hat{s}\hbox{ in }Q_{A.S}\times (\eta,T). \label{humm}
\end{equation}
Multiplying $(\ref{humm})$ by $\hat{u}_2$ and integrating over $Q_{A,S}\times (\eta,T)$, we get 
\[\int\limits_{0}^{S}\int\limits_{0}^{A}\int_{\Omega}\hat{u}_{2}^2(x,a,s,T)dxdads+\int\limits_{0}^{T}\int\limits_{0}^{S}\int_{\Omega}\hat{u}_{2}^2(x,0,s,t)dxdsdt+\int\limits_{0}^{T}\int\limits_{0}^{A}\int_{\Omega}\frac{g(0)}{2}\hat{u}_{2}^2(x,a,0,t)dxdadt\]\[+\int\limits_{\eta}^{T}\int_{Q_{A,S}}|\nabla\hat{u}_{2}|^2dxdadsdt+\int\limits_{\eta}^{T}\int_{Q_{A,S}}(\frac{-g'(s)}{2}+\mu_1(a)+\mu_2(s)+\lambda)\hat{u}_{2}^2dxdadsdt\]\[=\int\limits_{\eta}^{T}\int_{Q_{A,S}}\left(e^{-\lambda t}\int\limits_{0}^{S}\beta(a,s,\hat{s})q(x,0,\hat{s},t)d\hat{s}\right)\hat{u}_2dxdadsdt.\]
Using Young inequality and choosing $\lambda=\frac{3}{2}+\frac{1}{2}\|g'\|_{\infty},$
 we obtain
\[ \int\limits_{\eta}^{T}\int_{Q_{A,S}}\hat{u}_{2}^2dxdadsdt\leq \|\beta\|^{2}_{\infty}A \int\limits_{\eta}^{T}\int\limits_{0}^{S}\int_{\Omega} q^2(x,0,s,t)dxdsdt.\]
Finally, we get 
\[ \int\limits_{\eta}^{T}\int_{Q_{A,S}}u_{2}^2dxdadsdt\leq e^{\frac{3}{2}T}\|\beta\|^{2}_{\infty}A \int\limits_{\eta}^{T}\int\limits_{0}^{S}\int_{\Omega} q^2(x,0,s,t)dxdsdt.\]
\end{proof}
We are now in a position to provide the proof of the main result of this section.
\subsubsection{Proof of the Theorem \ref{a2}: Observability inequality}
\begin{proof}{Proof of the Theorem \ref{a2}} \smallbreak
We will use hypothesis \(S^{*}_2 < \max\{a_2, \hat{a}\} - a_1\) and \(S_1^* < \min\{\hat{a},a_2\}\) by dividing \[(0,A) \times (0,S) \hbox{ into }(0,A) \times (0,s_1) \cup (0,A) \times (s_1,S).\]
The proof will be in two parts.\smallbreak
We split the term to be estimated as the following
\[\int\limits_{0}^{S}\int\limits_{0}^{A}\int_{\Omega}q^2(x,a,s,T)dxdads=\int\limits_{s_1}^{S}\int\limits_{0}^{A}\int_{\Omega}q^2(x,a,s,T)dxdads+\int\limits_{0}^{s_1}\int\limits_{0}^{A}\int_{\Omega}q^2(x,a,s,T)dxdads.\]
\textbf{Estimation of $ \int\limits_{s_1}^{S}\int\limits_{0}^{A}\int_{\Omega}q^2(x,a,s,T)dxdads$}\smallbreak
We have \[\int\limits_{s_1}^{S}\int\limits_{0}^{A}\int_{\Omega}q^2(x,a,s,T)dxdads=\int\limits_{s_1}^{S}\int\limits_{0}^{a_1}\int_{\Omega}q^2(x,a,s,T)dxdads+\int\limits_{s_1}^{S}\int\limits_{a_1}^{A}\int_{\Omega}q^2(x,a,s,T)dxdads.\]
Using Proposition 3.3. we obtain the estimate; 
\begin{equation}\int\limits_{s_1}^{S}\int\limits_{0}^{a_1}\int_{\Omega}q^2(x,a,s,T)dxdads\leq C\int\limits_{0}^{A}\int\limits_{0}^{S}\int\limits_{0}^{A}\int_{\Omega}q^2(x,a,s,t)dxdadsdt.\label{couppa}\end{equation}
We are now left with estimating \[\int\limits_{s_1}^{S}\int\limits_{a_1}^{A}\int_{\Omega}q^2(x,a,s,T)dxdads.\]
But since $q=u_1+u_2\hbox{ on }(\eta,T)$ we must therefore estimate
\[\int\limits_{s_1}^{S}\int\limits_{a_1}^{A}\int_{\Omega}u_{1}^{2}(x,a,s,T)dxdads+\int\limits_{s_1}^{S}\int\limits_{a_1}^{A}\int_{\Omega}u_{2}^{2}(x,a,s,T)dxdads.\]
We have \[\int\limits_{s_1}^{S}\int\limits_{a_1}^{A}\int_{\Omega}u_{2}^{2}(x,a,s,T)dxdads\leq C(A-a_2)\int\limits_{\eta}^{T}\int\limits_{s_1}^{S}\int_{\Omega}q^2(x,0,s,t)dxdsdt.\]
And then, using Proposition \ref{Prop1}., we obtain the following.
\begin{equation}
	\int\limits_{s_1}^{S}\int\limits_{a_1}^{A}\int_{\Omega}u_{2}^{2}(x,a,s,T)dxdads\leq C_1\int\limits_{0}^{T}\int\limits_{s_1}^{s_2}\int\limits_{a_1}^{a_2}\int_{\omega}q^2(x,a,s,t)dxdadsdt.\label{debaay}
\end{equation}
As in $T>A-a_2+T_1+S^{*}_2$, there exists $\delta>0$ such that $$T>A-a_2+T_1+S^{*}_2+2\delta;$$ therefore $$T-(T_1+\delta)>A-(a_2-S^{*}_2+\delta).$$
Moreover for 
\[a\in (a_2-S^{*}_2-\delta,A)\hbox{ we have }T-(T_1+\delta)>A-(a_2-S^{*}_2-\delta)>A-a.\]
Then \[u_1(x,a,s,T)=0\hbox{ a.e. }x\in \Omega\hbox{, }a\in(a_2-(S^{*}_2-\delta),A).\]
Therefore 
\[\int\limits_{s_1}^{S}\int\limits_{a_1}^{A}\int_{\Omega}u_{1}^2(x,a,s,T)dxdads=\int\limits_{s_1}^{S}\int\limits_{a_1}^{a_2-S^{*}_2-\delta}\int_{\Omega}u_{1}^{2}(x,a,s,T)dxdads.\]
As $$T-\eta>A-a_2+S^{*}_2,$$ then using Proposition \ref{a4}., we obtain the following.
\begin{equation}
	\int\limits_{s_1}^{S}\int\limits_{a_1}^{A}\int_{\Omega}u_{1}^{2}(x,a,s,T)dxdads\leq K\int\limits_{\eta}^{T}\int\limits_{s_1}^{s_2}\int\limits_{a_1}^{a_2}\int_{\omega}u_{1}^{2}(x,a,s,t)dxdadsdt.\label{debaa}
\end{equation}
As $$q=u_1+u_2\Longleftrightarrow u_1=q-u_2$$ then 
\begin{equation}
	\int\limits_{s_1}^{S}\int\limits_{a_1}^{A}\int_{\Omega}u_{1}^{2}(x,a,s,T)dxdads\leq 2\left(\int\limits_{\eta}^{T}\int\limits_{s_1}^{s_2}\int\limits_{a_1}^{a_2}\int_{\omega}q^{2}(x,a,s,t)dxdadsdt+\int\limits_{\eta}^{T}\int\limits_{s_1}^{s_2}\int\limits_{a_1}^{a_2}\int_{\omega}u_{2}^{2}(x,a,s,t)dxdadsdt\right)..\label{debaa1}
\end{equation}
From the Proposition \ref{a5}.; we get 
\begin{equation}
	\int\limits_{s_1}^{S}\int\limits_{a_1}^{A}\int_{\Omega}u_{1}^{2}(x,a,s,T)dxdads\leq K\int\limits_{0}^{T}\int\limits_{s_1}^{s_2}\int\limits_{a_1}^{a_2}\int_{\omega}q^{2}(x,a,s,t)dxdadsdt..\label{debaa11}
\end{equation}
Combining the inequalities $(\ref{debaay})$ and $(\ref{debaa11})$, we obtain
\begin{equation}
	\int\limits_{s_1}^{S}\int\limits_{a_1}^{A}\int_{\Omega}q^{2}(x,a,s,T)dxdads\leq K_1 \int\limits_{0}^{T}\int\limits_{s_1}^{s_2}\int\limits_{a_1}^{a_2}\int_{\Omega}q^{2}(x,a,s,t)dxdadsdt.\label{coup1a}
\end{equation}
Finally, $(\ref{couppa})$ and $(\ref{coup1a})$ give
\begin{equation}
	\int\limits_{s_1}^{S}\int\limits_{0}^{A}\int_{\Omega}q^{2}(x,a,s,T)dxdads\leq K_T \int\limits_{0}^{T}\int\limits_{s_1}^{s_2}\int\limits_{a_1}^{a_2}\int_{\Omega}q^{2}(x,a,s,t)dxdadsdt.\label{simple}
\end{equation}
\textbf{Estimation of $ \int\limits_{0}^{s_1}\int\limits_{0}^{A}\int_{\Omega}q^2(x,a,s,T)dxdads$}\smallbreak
By Proposition \ref{a3}, there exists $a_{0}^*\in (0,a_2)$ such that
\begin{equation}\int\limits_{0}^{s_1}\int\limits_{0}^{a^{*}_0}\int_{\Omega}q^2(x,a,s,T)dxdads\leq C\int\limits_{0}^{A}\int\limits_{s_1}^{s_2}\int\limits_{a_1}^{a_2}\int_{\Omega}q^2(x,a,s,t)dxdadsdt.\label{coupp}\end{equation}
We are now left with the estimate of \[\int\limits_{0}^{s_1}\int\limits_{a^{*}_0}^{A}\int_{\Omega}q^2(x,a,s,T)dxdads.\]
But since $q=u_1+u_2$ we must therefore estimate
\[\int\limits_{0}^{s_1}\int\limits_{a^{*}_{0}}^{A}\int_{\Omega}u_{1}^{2}(x,a,s,T)dxdads+\int\limits_{0}^{s_1}\int\limits_{a^{*}_0}^{A}\int_{\Omega}u_{2}^{2}(x,a,s,T)dxdads.\]
We have \[\int\limits_{0}^{s_1}\int\limits_{a^{*}_0}^{A}\int_{\Omega}u_{2}^{2}(x,a,s,T)dxdads\leq C(A-a_2)\int\limits_{\eta}^{T}\int\limits_{s_1}^{S}\int_{\Omega}q^2(x,0,s,t)dxdsdt.\]
And then, using Proposition \ref{Prop1}., we obtain
\begin{equation}
	\int\limits_{0}^{s_1}\int\limits_{a^{*}_0}^{A}\int_{\Omega}u_{2}^{2}(x,a,s,T)dxdads\leq C_1\int\limits_{0}^{T}\int\limits_{s_1}^{s_2}\int\limits_{a_1}^{a_2}\int_{\omega}q^2(x,a,s,t)dxdadsdt.
\end{equation}
As in $T>A-a_2+T_1+S^{*}_1$, there exists $\delta>0$ such that $$T>A-a_2+T_1+S^{*}_1+2\delta;$$ therefore $$T-(T_1+\delta)>A-(a_2-S^{*}_1+\delta).$$
Moreover for 
\[a\in (a_2-S^{*}_1-\delta,A)\hbox{ we have }T-(T_1+\delta)>A-(a_2-S^{*}_1-\delta)>A-a.\]
Then \[u_1(x,a,s,T)=0\hbox{ a.e. }x\in \Omega\hbox{, }a\in(a_2-(S^{*}_1-\delta),A).\]

Therefore 
\[u_1(x,a,s,T)=0\hbox{ a.e. }x\in \Omega\hbox{, }a\in(a^{*}_0,A)\hbox{ and }s\in(0,s_1).\] 
Then
\begin{equation}
	\int\limits_{0}^{s_1}\int\limits_{0}^{A}\int_{\Omega}q^{2}(x,a,s,T)dxdads\leq K\int\limits_{0}^{T}\int\limits_{s_1}^{s_2}\int\limits_{a_1}^{a_2}\int_{\omega}q^{2}(x,a,s,t)dxdadsdt.\label{simplo}
\end{equation}
Finally, $(\ref{simple})$ and $(\ref{simplo})$ give
\begin{equation}
	\int\limits_{0}^{S}\int\limits_{0}^{A}\int_{\Omega}q^{2}(x,a,s,T)dxdads\leq K_T \int\limits_{0}^{T}\int\limits_{s_1}^{s_2}\int\limits_{a_1}^{a_2}\int_{\Omega}q^{2}(x,a,s,t)dxdadsdt.\label{coup}
\end{equation}
A figure illustrating the concept of observability (the observability inequality). We consider a vector field propagating within a box $[0, A] \times [0, S] \times [0, T]$. The field starts from the top at points with coordinates $(a, s, T)$, where $(a, s) \in [0, A] \times [0, S]$, and moves downward along the line defined by the vector $(-1, -g(s), 1)$.
\begin{figure}[H]
\begin{adjustbox}{center,left}
			\begin{overpic}[scale=0.25]{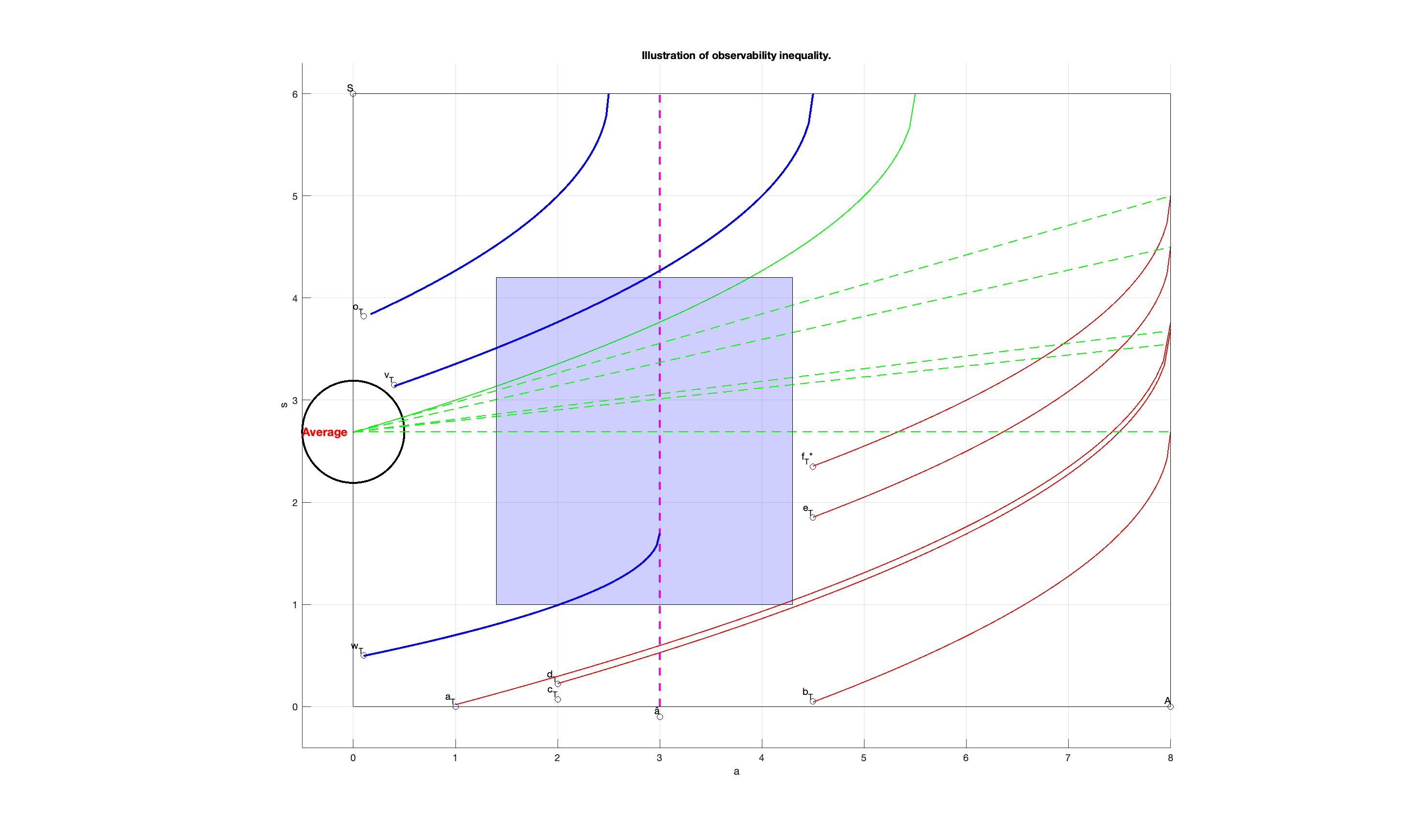}	
					\end{overpic}
\end{adjustbox}
\caption{The backward characteristics starting from \((a, s, T)\), are illustrated. The red lines correspond to characteristics (points \(a_T, b_T, c_T, d_T, e_T,\) and \(f_T\)) that reach the boundary at $a=A$ and are renewed via the renewal condition
$
\int\limits_{0}^{S} \beta(a, s, u) q(x, 0, u, t) \, du,
$
which represents an average value over the size variable of newborns. Once renewed, these characteristics can subsequently be observed as described in Proposition I.9.
The blue lines represent backward characteristics originating from \((a, s, T)\) (points \(o_T, v_T,\) and 
\(w_T\)) that either enter the control domain or leave the region \([0, A] \times [0, S] \times [0, T]\) through the section \(s = S\). 
In the latter case, the exit 
occurs at a point $(a, S, t),$
where $a\in(0,\hat{a})\hbox{ and } T-t>T_1$.}
\end{figure}
\end{proof}
\subsubsection{Outline of Proofs for the Introductory Comments}
In the case where the study of the system \ref{2} is reduced to the set $V$ define by
\[ V = \{(x,a,s,t) \in (0,A) \times (0,S) \times (0,T) \;|\;x\in \Omega;\; 0 \leq a \leq A;\; G^{-1}(a) \leq s \leq S;\; 0 \leq t \leq T \}. \] The proof of the result of null controllability will be to show the existence of $ K_T> $ 0 such that the state of the adjoint system \ref{3} verifies
\begin{equation}
	\int\limits_{0}^{A}\int\limits_{G^{-1}(a)}^{S}\int_{\Omega}q^{2}(x,a,s,T)dxdads\leq K_T \int\limits_{0}^{T}\int\limits_{a_1}^{a_2}\int\limits_{\max\{s_1,G^{-1}(a)\}}^{s_2}\int_{\Omega}q^{2}(x,a,s,t)dxdadsdt.
\end{equation}
for any $q_0\in L^2(L^2(Q_G))$  where \[Q_G= \{(x,a,s) \in (0,A) \times (0,S)  \;|\;x\in \Omega;\; 0 \leq a \leq A;\; G^{-1}(a) \leq s \leq S. \}\]
The following inequality,
\begin{equation}
	\int\limits_{s_1}^{S}\int\limits_{0}^{A}\int_{\Omega}q^{2}(x,a,s,T)dxdads\leq K_T \int\limits_{0}^{T}\int\limits_{s_1}^{s_2}\int\limits_{a_1}^{a_2}\int_{\Omega}q^{2}(x,a,s,t)dxdadsdt,
\end{equation}
established in the evidence of the main result allows us to lead to the following inequality
\begin{equation}
	\int\limits_{0}^{A}\int\limits_{s_1}^{S}\int_{\Omega}q^{2}(x,a,s,T)dxdads\leq K_T \int\limits_{0}^{T}\int\limits_{a_1}^{a_2}\int\limits_{\max\{s_1,G^{-1}(a)\}}\int_{\Omega}q^{2}(x,a,s,t)dxdadsdt,\label{wora}
\end{equation} 
for $T$ verifying \[T>A-a_2+T_1+S^*_2.\]
Now let $a^* \in (0, a_2)$ such that $G(s_1) = a^*$ which is equivalent to $G^{-1}(a^*) = s_1$. Then, we need to estimate $q(x,a,s,T)$ over $(x,a,s)\hbox{ such that }x\in\Omega \hbox{, }0\leq a\leq a^*\hbox{ and }G^{-1}(a)\leq s\leq s_1.$ represented in Figure 7-1 in green.\smallbreak
Considering the following characteristics $\gamma(\lambda)=(T+a-\lambda,G^{-1}(G(s)+T-\lambda),\lambda),$ and by setting \( w(\lambda, x) = \tilde{q}(x, \gamma(\lambda)) \) with \( \tilde{q} = e^{-\mu(a, s)} q \), we can show that for all \( T > T_1 \), we have
\begin{equation}
	\int\limits_{0}^{a^*}\int\limits_{G^{-1}(a)}^{s_1}\int_{\Omega}q^{2}(x,a,s,T)dxdads\leq K_T \int\limits_{0}^{T}\int\limits_{a_1}^{a_2}\int\limits_{\max\{s_1,G^{-1}(a)\}}^{s_2}\int_{\Omega}q^{2}(x,a,s,t)dxdadsdt,\label{worb}
\end{equation} 
Combining \ref{wora} and \ref{worb}
we obtain for every $T>A-a_2+T_1+S^*_2$
\begin{equation}
	\int\limits_{0}^{A}\int\limits_{G^{-1}(a)}^{S}\int_{\Omega}q^{2}(x,a,s,T)dxdads\leq K_T \int\limits_{0}^{T}\int\limits_{a_1}^{a_2}\int\limits_{\max\{s_1,G^{-1}(a)\}}^{s_2}\int_{\Omega}q^{2}(x,a,s,t)dxdadsdt.
\end{equation} 
\smallbreak
\smallbreak
\section{Degenerate Model}
\subsection{Presentation of the model}
In this second part, we consider the same type of model as the first, but with a degenerate diffusion term and a classical (simpler) birth term, which is less realistic than the first but mathematically provides a better controllability time. The controllability of this model was introduced by Fragnelli et al. In this work, we significantly improve the result obtained.
Indeed, this part is concerned with the null controllability of a degenerate, size, and age structured population model described as:
\begin{equation}
	\left\lbrace\begin{array}{ll}
		\dfrac{\partial y}{\partial t}+\dfrac{\partial y}{\partial a}+\dfrac{\partial g(s)y }{\partial s}-k(x)\dfrac{\partial^2 y}{\partial x^2}-b(x)\dfrac{\partial y}{\partial x}+\mu(a,s)y=mu, &\hbox{ in }Q ,\\ 
		y(0,a,s,t)=y(1,a,s,t)=0, &\hbox{ on }\Sigma,\\ 
		y\left( x,0,s,t\right) =\displaystyle\int\limits_{0}^{A}\beta(a)y(x,a,s,t)da, &\hbox{ in } Q_{S,T} \\
		y\left(x,a,s,0\right)=y_{0}\left(x,a,s\right),&
		\hbox{ in }Q_{A,S};\\
		y(x,a,0,t)=0,& \hbox{ in } Q_{A,T},
	\end{array}\right.
	\label{2cc}
\end{equation}
where $Q_{A,S}:=(0,1)\times (0,A)\times (0,S)$, $Q:=(0,1)\times (0,A)\times (0,S)\times(0,T)$, $\Sigma:=(0,A)\times (0,S)\times(0,T)$, $Q_{S,T}:=(0,1)\times (0,S)\times (0,T)$,  and $Q_{A,T}:=(0,1)\times (0,A)\times (0,T)$. Here $y(x,a,s,t)$ represents the population density of certain species of age $a\in (0,A)$ and size $s\in (0,S)$ at time $t\geq 0$ and location $x\in (0,1)$, where $A>0$ and $S>0$ are the maximum age of life and the maximum size of individuals, respectively. The age-dependent function $\beta$ denotes natural fertility, and therefore the formula $\int_{0}^{A}\beta(a)yda $ determines the density of newborn individuals of size $s$ at time $t$ and the location of the point $x$. The age and size-dependent function $\mu$ denotes the death rate and we assume that it satisfies $\mu(a,s)=\mu_1(a)+\mu_2(s)$. The space-dependent diffusion coefficient $k$ can degenerate at one point of the domain or at both extreme points. For $0\leq a_1<a_2\leq A$ and $\omega=(l_1,l_2)\subset (0,1)$, we denote by $m$ the characteristic function of $Q_{1}:=\omega\times (a_1,a_2)\times (s_1,s_2)$, which is the region where the control $u$ acts. The initial distribution of the population is $y_{0}\left(x,a,s\right)$. For more details on the modeling of such a system, we refer to G. Webb \cite{b9}.
In addition to the hypotheses about $\mu$, we also have the following.
\begin{align*}
(	{\bf H5}):&
	\left\lbrace\begin{array}{l}
     (g(x)>0\hbox{ for all }x\in [0,S]\hbox{ and }g\in C^1([0,S]);\\
     \int\limits_{a}^{b}\dfrac{ds}{g(s)}<+\infty \hbox{ for all }[a,b]\subset [0,S].
	\end{array}
	\right. \qquad
(	{\bf H6}):&
	\left\lbrace\begin{array}{l}
		\beta(\cdot)\in C\left([0,A]\right) 
		\beta(s) \ge 0 \quad {\rm  for \; a.e.} \quad a\in (0,A),\\
	\end{array}
	\right. \qquad .\cr
(	{\bf H7}):&
\begin{array}{c}
	\beta(a)=0 \quad \forall a\in (0,\hat{a}) \text{ for some } \hat{a}\in (0,A). 
\end{array}
\end{align*}
For more details on the modeling of such a system and the biological significance of the hypotheses, we refer to G. Webb \cite{b9}.
Before going further and stating the main result of this paper, we need to introduce some notation and assumptions. We start with the following assumptions about the diffusion coefficient $k$.
\begin{mainassumptions}\label{Main1}
To study the well-posedness of \eqref{2cc} we assume that $k\in C([0,1])$ and there exist $M_1\hbox{, }M_2\in (0,2)$ such that \[k\in C^1(0,1)\hbox{, }k>0\hbox{ in }(0,1)\hbox{, }k(0)=k(1)=0,\]
\[xk'(x)\leq M_1k(x)\hbox{ and }(x-1)k'(x)\leq M_2k(x)\]

for all $x\in[0,1],$ or \[k\in C^1((0,1])\hbox{, }k>0\hbox{ in }(0,1]\hbox{, }k(0)=0\hbox{ and }xk'(x)\leq M_1k(x)\]

for all $x\in[0,1],$ or

\[k\in C^1((0,1])\hbox{, }k>0\hbox{ in }[0,1)\hbox{, }k(1)=0\hbox{ and }(x-1)k'(x)\leq M_2k(x)\]

for all $x\in[0,1]$. On the other hand, \[b\in C([0,1])\cap C^2(0,1)\hbox{ and }\dfrac{b}{k}\in L^1(0,1).\] 

\end{mainassumptions}

Next, consider the weight function $\gamma$ defined by (cf. \cite{Feller})

$$ \gamma(x):=\exp\left(\int\limits_{x}^{\frac{3}{4}}\dfrac{b(y)}{k(y)}dy\right)\qquad x\in[0,1],$$

and define 

$$\sigma(x):=k(x)\dfrac{1}{\gamma(x)}, \qquad x\in[0,1].$$

We then select the following weighted Hilbert spaces.

\begin{align*}
L^{2}_{\frac{1}{\sigma}}(0,1)&:=\{u\in L^2(0,1)|\;\|u\|_{\frac{1}{\sigma}}<+\infty\},\qquad\|u\|^{2}_{\frac{1}{\sigma}}=\int\limits_{0}^{1}u^2\frac{1}{\sigma}dx,\cr
H^{1}_{\frac{1}{\sigma}}(0,1)&:=L^{2}_{\frac{1}{\sigma}}(0,1)\cap H^{1}_{0}(0,1) \qquad\|u\|^{2}_{1,\frac{1}{\sigma}}=\|u\|^{2}_{\frac{1}{\sigma}}+\int\limits_{0}^{1}u^{2}_{x}dx,
\end{align*}

For $u$ sufficiently smooth, e.g., $u\in W^{2,1}_{loc}(0,1)$, we set

$$L_1 u:=ku_{xx}-b(x)u_{x},$$ 

for almost every $x\in (0,1)$. Moreover, for all $(u,v)\in H^{2}_{\frac{1}{\sigma}}(0,1)\times H^{1}_{\frac{1}{\sigma}}(0,1),$ 

$$(L_1 u,v)_{\frac{1}{\sigma}}=-\int\limits_{0}^{1}\gamma u_x v_x dx,$$

where

\[H^{2}_{\frac{1}{\sigma}}(0,1):=\{u\in H^{1}_{\frac{1}{\sigma}}(0,1)|L u\in L^{2}_{\frac{1}{\sigma}}(0,1)  \} \quad \|u\|^{2}_{2,\frac{1}{\sigma}}=\|u\|^{2}_{1,\frac{1}{\sigma}}+\|L u\|^{2}_{\frac{1}{\sigma}}.\]
On the other hand, to establish the null controllability result of \eqref{2cc}, we additionally assume that $k$ satisfies the following assumptions (see \cite{canard1}).
\begin{mainassumptions}\label{Main2}
		The function $k\in C^0([0,1])\cap C^3(0,1)$ is such that $k(0)=k(1)=0,$ and $k>0$ on $(0,1)$. Moreover,
  \begin{enumerate}
		\item there exists $\epsilon\in (0,1)\hbox{, }N_1\in (0,2)\hbox{ and }C_1>0$ such that the function \[\dfrac{x(b-k_x)}{k}\in L^{\infty}(0,\epsilon)\hbox{, }x(k)_x\leq N_1k(x)\hbox{ for all }x\in (0,\epsilon),\]

		there exists a function \[C_1=C_1(\epsilon)>0\hbox{ defined in }(0,\epsilon)\hbox{ such that }C_1(\epsilon')\longrightarrow 0\hbox{ as }\epsilon'\longrightarrow 0^+\]

		and

		\[\left|\left(\dfrac{x(b-k_x)}{k}\right)_{xx}-\dfrac{b}{a}\left(\dfrac{x(b-a_x)}{a}\right)\right|\leq \dfrac{C_1(\epsilon')}{x^2}\hbox{ for all }x\in(0,\epsilon').\]

		\item

		\[\dfrac{(x-1)(b-k)_x}{k}\in L^{\infty}(1-\epsilon,1)\hbox{ and there exists } N_2\in(0,2) \hbox{ and }C_2>0\hbox{ such that },\]
		\[\left(\dfrac{(x-1)(k)_x}{k}\right)_{xx}<\dfrac{C_2}{k}\hbox{ for all }x\in (1-\epsilon,1),\]

		and there exists a function \[C_2=C_2(\epsilon)>0\hbox{ defined in }(1-\epsilon,1)\hbox{ such that }C_1(\epsilon')\longrightarrow 0\hbox{ as }\epsilon'\longrightarrow 0^+\]

		\[\left|\left(\dfrac{(x-1)(b-k_x)}{k}\right)_{xx}-\dfrac{b}{a}\left(\dfrac{(x-1)(b-a_x)}{a}\right)\right|\leq \dfrac{C_1(\epsilon')}{(x-1)^2}\hbox{ for all }x\in(1-\epsilon',1).\]
	\end{enumerate}
\end{mainassumptions}
After such a long but necessary preparation, we can now clearly state the main results of this paper. The well-posedness of \eqref{2cc} follows from the following result (see, e.g., \cite{PugT}).
\begin{theorem}\label{MainTb}
	Let conditions {\bf (H1)}-{\bf (H2)}-{\bf (H5)}-{\bf (H6)}-{\bf (H7)} be satisfied. Furthermore, we assume that Assumption \ref{Main1} holds. For all $u\in L^{2}_{\frac{1}{\sigma}}(Q)$ and $y_0\in L^{2}_{\frac{1}{\sigma}}(Q_{A,S})$, the system \eqref{2cc} has a unique solution \[y\in \mathcal{U}:=C\left([0,T];L^{2}_{\frac{1}{\sigma}}(Q_{A,S})\right) \cap L^2\left(0,T;H^{1}(0,A)\times H^1(0,S) \times H^{1}_{\frac{1}{\sigma}}(0,1)\right).\]
If, in addition, $u=0$, then 
		\begin{align*}
			y\in C^1\left([0,T];L^{2}_{\frac{1}{\sigma}}(Q_{A,S})\right).
		\end{align*}
\end{theorem}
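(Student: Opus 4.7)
The plan is to cast system~\eqref{2cc} as an abstract Cauchy problem on the weighted Hilbert space $X := L^{2}_{\frac{1}{\sigma}}(Q_{A,S})$ by introducing the unbounded operator
\[
\mathfrak{A} y := L_1 y - \frac{\partial y}{\partial a} - \frac{\partial(g(s)y)}{\partial s} - \mu(a,s) y,
\]
whose domain incorporates the Dirichlet conditions at $x=0,1$, the condition $y(x,a,0)=0$, and the non-local renewal $y(x,0,s) = \int_0^A \beta(a) y(x,a,s)\,da$ at $a=0$. Once $\mathfrak{A}$ is shown to generate a $C_0$-semigroup $(\mathbb{S}(t))_{t\ge 0}$ on $X$, the variation-of-constants formula $y(t) = \mathbb{S}(t) y_0 + \int_0^t \mathbb{S}(t-\tau)\, m\,u(\tau)\, d\tau$ furnishes the candidate solution, and the regularity $y\in\mathcal{U}$ will follow from energy estimates tailored to the weighted setting.

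For the generation property I would argue in three layers. First, under Assumption~\ref{Main1}, the degenerate operator $L_1$ with Dirichlet data generates an analytic $C_0$-semigroup on $L^{2}_{\frac{1}{\sigma}}(0,1)$: the bilinear form $(u,v)\mapsto \int_0^1 \gamma u_x v_x\,dx$ is continuous and coercive on $H^{1}_{\frac{1}{\sigma}}(0,1)$ thanks to the identity $(L_1 u,v)_{\frac{1}{\sigma}} = -\int_0^1 \gamma u_x v_x\,dx$ together with a Hardy-type inequality, and Lumer--Phillips then applies; this is essentially the framework of~\cite{canard1}. Second, coupling $L_1$ with the first-order transport parts $\partial_a$ and $\partial_s(g\cdot)$ and the multiplicative term $\mu$, with homogeneous boundary data in $a$ and $s$, yields a $C_0$-semigroup on $X$ by combining a tensor-product construction with the method of characteristics already used in Section~\ref{Seca}. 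Third, I would reinstate the renewal condition at $a=0$ as a non-local boundary perturbation on the domain; hypotheses H6--H7 ensure $\beta\in L^\infty$ vanishing near zero, so a Webb-type bootstrap~\cite{b9} (checking dissipativity of $\mathfrak{A}-\omega I$ for $\omega$ large enough) preserves the generation property.

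The regularity $y\in\mathcal{U}$ would follow from the usual energy method: multiplying \eqref{2cc} by $y/\sigma$ and integrating over $Q_{A,S}$ produces the dissipation $\int_0^1 \gamma y_x^2\,dx$, which controls the $H^{1}_{\frac{1}{\sigma}}(0,1)$-seminorm; the transport terms yield boundary contributions at $a=0,A$ and $s=0,S$ that are either nonnegative (using $g(0)\ge 0$ and the Dirichlet condition $y(x,a,0,t)=0$) or controlled using $g'\in L^\infty$ and Young's inequality on the renewal trace. Gronwall's lemma then delivers
\[
\|y\|_{C([0,T];X)}^{2} + \|y\|_{L^{2}(0,T;H^{1})}^{2} \le C\bigl(\|y_0\|_{X}^{2} + \|u\|_{L^{2}_{\frac{1}{\sigma}}(Q)}^{2}\bigr),
\]
giving $y\in\mathcal{U}$, while uniqueness is obtained by applying the same inequality to the difference of two solutions. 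For the $C^{1}$ claim when $u=0$, I would upgrade the generation step to analyticity: the transport and mortality parts are $L_1$-relatively bounded with relative bound zero on the joint domain, so Kato's perturbation theorem shows $\mathfrak{A}$ is sectorial on $X$; consequently $t\mapsto \mathbb{S}(t) y_0 \in C^{1}((0,T];X)$ for every $y_0\in X$, with the equation satisfied classically.

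The main obstacle I anticipate is the generation step itself, and specifically the interaction between the degeneracy of $k$ at $x=0,1$ and the non-local renewal at $a=0$: solving the resolvent equation $(\lambda I - \mathfrak{A}) y = f$ requires a combined fixed-point argument that simultaneously controls the Hardy-weighted trace of $y$ at $a=0$, the degenerate spatial behaviour encoded by Assumption~\ref{Main2}, and the size-transport flux across $s=0$. Verifying the range condition without losing any of the three couplings---transport, degenerate diffusion, and non-local birth---is where I expect most of the technical work to lie; the presence of Assumption~\ref{Main2} suggests the argument may ultimately be reduced, as in~\cite{PugT}, to the abstract functional framework developed there, with the age-size transport coupling handled by a perturbation/characteristics argument as in Section~\ref{Seca}.
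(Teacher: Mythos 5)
The paper itself does not supply a proof of this theorem: it is stated with a pointer to \cite{PugT}, and the underlying generation property is recorded separately in Lemma \ref{generation-result} as following from ``an argument similar to \cite{Genni}''. Your overall framework --- realize \eqref{2cc} as an abstract Cauchy problem on $L^{2}_{\frac{1}{\sigma}}(Q_{A,S})$, obtain generation by combining the form/Lumer--Phillips treatment of the degenerate operator $L_1$ with the age--size transport and a bounded non-local perturbation at $a=0$, then use Duhamel and weighted energy estimates for existence, uniqueness and the $\mathcal{U}$-regularity --- is exactly the route those references take, so up to that point your proposal is consistent with what the paper relies on.

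There is, however, a genuine gap in your argument for the final $C^1$ statement. You claim that the transport and mortality parts are $L_1$-relatively bounded with relative bound zero, so that $\mathfrak{A}$ is sectorial and the semigroup analytic. This is false: $L_1$ acts only in the spatial variable $x$, so no estimate of the form $\|\partial_a y\|\le \varepsilon\|L_1 y\|+C_\varepsilon\|y\|$ can hold --- the age and size derivatives are simply invisible to $L_1$. The age--size part of the dynamics is a genuine translation along characteristics, and the resulting semigroup (a composition of translation with $e^{tL_1}$ along those characteristics, as in the representation \eqref{alp}) is not analytic; there is no smoothing in $a$ or $s$. Consequently the differentiability of $t\mapsto y(t)$ cannot be obtained by sectoriality; for a general $C_0$-semigroup one gets $y\in C^1$ only for initial data in $D(\mathcal{A})$ (or, as in \cite{PugT}, by exploiting the specific structure of the renewal equation after time $t$ exceeds suitable thresholds). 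You need to either restrict the $C^1$ conclusion to $y_0\in D(\mathcal{A})$, or reproduce the bootstrap of \cite{PugT}/\cite{b9} that upgrades regularity through the Volterra (renewal) formulation; the Kato perturbation route as written would fail.
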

We select the following definition.
\begin{definition}

The real $\hat{a}$ is the minimal age at which individuals become fertile. We will call it \emph{ minimal age of fertility}.

\end{definition}

For $0 \leq s_1 < s_2 \leq S $, let it remain

\[S^{*}_{1}=\int\limits_{0}^{s_1}\frac{ds}{g(s)}\hbox{ ,}S^{*}_{2}=\int\limits_{s_2}^{S}\frac{ds}{g(s)}\hbox{ and }T_0=\max\left\{S^{*}_{1}, S^{*}_{2}\right\} \hbox{ and } T_1=\max\left\{S^{*}_{1},a_1+S^{*}_{2}\right\}.\]
\subsection{Main result and comments}
The following is the main result of this paper.
\begin{theorem}\label{MainTa}
Let conditions {\bf (H1)}-{\bf (H4)} be satisfied and assume that Assumptions \ref{Main1}-\ref{Main2} hold. Furthermore, we assume that $a_1< \hat{a}$ and $S^{*}_2<\min\{a_2-a_1,\hat{a}-a_1\}\hbox{, } S^{*}_1<\min\{a_2,\hat{a}\}$. Then, for every $T>A-a_2+a_1+S^{*}_1+S^{*}_2$ and for every $y_0\in L^{2}_{\frac{1}{\sigma}}(Q_{A,S}) $
there exists a control $u\in L^{2}_{\frac{1}{\sigma}}(Q_1)$ such that the solution $y$ of \eqref{2cc} satisfies 
\[ y(x,a,s,T)=0\hbox{ for a.e. }x \in (0,1)\hbox{ } a \in (0,A)\hbox{ and } s \in (0,S).\]
\end{theorem}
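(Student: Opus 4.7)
The plan is to reduce null controllability of \eqref{2cc} to the final-state observability of the adjoint system (via \cite[Thm.~11.2.1]{b8}) and to follow the scheme developed for Theorem \ref{MainT1}, exploiting the simpler structure of the degenerate model to obtain the sharper time $T>A-a_{2}+a_{1}+S^{*}_{1}+S^{*}_{2}$. Since the boundary condition $y(x,0,s,t)=\int_{0}^{A}\beta(a)y(x,a,s,t)\,da$ is size-preserving, the dual non-local source is $\beta(a)q(x,0,s,t)$ at the \emph{same} $s$, so every backward characteristic $\lambda\mapsto(t-\lambda,G^{-1}(G(s)+(t-\lambda)),\lambda)$ of the adjoint equation stays on a single rigid curve of the $(a,s)$-plane. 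This is the geometric reason for the sharper time: along one such characteristic one can \emph{add} the growth times $S^{*}_{1}$ (from $0$ to $s_{1}$) and $S^{*}_{2}$ (from $s_{2}$ to $S$) instead of taking their maximum $T_{0}$, as was needed in Part~I.

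Given this observation, the proof would proceed in three steps that closely parallel the proof of Theorem \ref{MainT1}. Step~1 is the analogue of Proposition \ref{Prop1}: under $a_{1}<\hat{a}$, $S^{*}_{2}<\min\{a_{2}-a_{1},\hat{a}-a_{1}\}$, $S^{*}_{1}<\min\{a_{2},\hat{a}\}$ and for $\eta>a_{1}+S^{*}_{2}$, each backward characteristic issued from $(0,s,t)$ must either enter the observation cylinder $\omega\times(a_{1},a_{2})\times(s_{1},s_{2})$ or exit through $\{s=S\}$ (where $q\equiv 0$) before reaching $t=0$, which yields
\[
\int_{\eta}^{T}\!\int_{0}^{S}\!\int_{0}^{1}\frac{q^{2}(x,0,s,t)}{\sigma(x)}\,dx\,ds\,dt\;\leq\;C_{\eta,T}\int_{0}^{T}\!\int_{s_{1}}^{s_{2}}\!\int_{a_{1}}^{a_{2}}\!\int_{\omega}\frac{q^{2}(x,a,s,t)}{\sigma(x)}\,dx\,da\,ds\,dt.
\]
Step~2 performs the Duhamel decomposition $q=u_{1}+u_{2}$ on $(\eta,T)$ exactly as in \eqref{322aa}--\eqref{322bb}, and bounds $u_{2}$ via Step~1 by a weighted energy identity mirroring Proposition \ref{a5}. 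Step~3 treats $u_{1}$: by backward characteristics $u_{1}(\cdot,\cdot,\cdot,T)$ vanishes on the zone where the trajectory exits through $\{a=A\}\cup\{s=S\}$, while on the complementary slab, where trajectories cross the control region, the final-state observability of the degenerate parabolic operator $L_{1}$ in $L^{2}_{\frac{1}{\sigma}}(0,1)$ from \cite{canard1} (applied with frozen $(a,s)$ parameters under Assumptions \ref{Main1}--\ref{Main2}) plays the role that the classical Neumann heat observability plays in Part~I. Summing the three contributions delivers the required observability inequality, and duality yields the control $u\in L^{2}_{\frac{1}{\sigma}}(Q_{1})$.

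The main obstacle I anticipate lies in Step~3, namely in the uniform-in-$(a,s)$ application of the degenerate parabolic observability of \cite{canard1}. One must freeze $(a,s)$ while preserving the weight $1/\sigma$ and must check that the observability constant obtained is integrable in $(a,s)$, so that no blow-up occurs near $s=S$ (where the growth $g$ is delicate) or near $a=A$. A secondary, essentially geometric, issue is to orchestrate the sharp additive budget $A-a_{2}+a_{1}+S^{*}_{1}+S^{*}_{2}$: one has to identify the worst-case backward trajectory, which concatenates, along one $(a,s)$-characteristic, the $A-a_{2}$ aging segment of the parent, the delay $a_{1}$ of her offspring before entering the control age window, the growth $S^{*}_{2}$ from $s_{2}$ to $S$, and the growth $S^{*}_{1}$ from $0$ to $s_{1}$. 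The absence of size mixing in the kernel $\beta(a)$ is precisely what permits these four quantities to be summed rather than maximised; making this bookkeeping precise via suitable intermediate ages $a_{0}\in(a_{1},a_{2})$ and $a^{*}_{0}\in(0,a_{2})$, in the spirit of Proposition \ref{a3}, is the delicate technical point.
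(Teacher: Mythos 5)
Your proposal follows essentially the same route as the paper: reduction to final-state observability, estimation of the non-local term $\beta(a)q(x,0,s,t)$ along backward characteristics, the Duhamel splitting $q=v_1+v_2$ with the weighted energy bound for $v_2$ and degenerate parabolic observability (from \cite{canard1}) plus characteristic-exit vanishing for $v_1$, and you correctly identify that the size-locality of the kernel $\beta(a)$ is what turns the $\max$ in Part~I into the sum $a_1+S^*_1+S^*_2$. The ``delicate bookkeeping'' you flag is precisely where the paper works hardest — it splits into the cases $S^*_1\lessgtr a_1+S^*_2$, introduces $\beta^*$ with $G(s_1)-G(\beta^*)=a_1$, and partitions the small-size region into slabs $S^1_{T,i}$ so that the observation window $\eta$ can be chosen adaptively (note that your uniform choice $\eta>a_1+S^*_2$ in Step~1 does not suffice when $S^*_1>a_1+S^*_2$; one needs $\eta$ near $S^*_1$ there, which is exactly what that partition delivers).
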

The remaining section of this work is organized as follows: In Subsection \ref{Sec2}, we establish the final state observability for the adjoint system and in our proof we combine characteristics method with observability estimates for degenerate parabolic equations (see for instance \cite{canard1,canard}). Thanks to such estimates, we obtain the proof of our main result (Theorem \ref{MainT2})
\subsection{An Observability Inequality}\label{Sec2}
It is well known that the system \eqref{2cc} can equivalently be rewritten as an abstract evolution system:

\begin{align}\label{Abstract}
\begin{cases}
\frac{\partial}{\partial t} y(t)=\mathcal{A}y(t)+\mathcal{B}u(t),& t\geq 0,\cr
y(0)=y_0,
\end{cases}
\end{align}

also to be referred to as $(\mathcal{A},\mathcal{B})$ in the sequel, where we can identify the operators $\mathcal{A}$ and $\mathcal{B}$ through their adjoints by formally taking the inner product of \eqref{Abstract} with a smooth function $\varphi$ see, for instance, \cite[Section 11.2]{b8}. The state and control spaces are

\begin{align*}
	K:=L^{2}_{\frac{1}{\sigma}}(Q_{A,S}),\qquad U:= L^{2}_{\frac{1}{\sigma}}(Q_1).
\end{align*}

The unbounded linear operator $\mathcal{A}:D(\mathcal{A})\subset K\longrightarrow K$, is defined for every $y\in D(\mathcal{A})$ by
$$\mathcal{A}y=L_1 y-\dfrac{\partial y}{\partial a}-\dfrac{\partial g(s) y}{\partial s}-\left(\mu_1(a)+\mu_2(s)\right)y,$$
 with domain
 $$D(\mathcal{A}):= \begin{cases}
 	y\in L^2((0,A)\times (0,S);H^{2}_{\frac{1}{\sigma}}(0,1)):\quad  \mathcal{A}y\in L^{2}_{\frac{1}{\sigma}}(Q_{A,S}),\cr
 	\dfrac{\partial y}{\partial a},\dfrac{\partial y}{\partial s} \in L^2((0,A)\times (0,S);H^{1}_{\frac{1}{\sigma}}(0,1)),\cr
 	y(1,\cdot,\cdot)=y(0,\cdot,\cdot)=0,\cr
 	y(x,a,0)=0,\quad y(x,0,s)=\int_{0}^{A}\beta(a)y(x,a,s,t)da\quad {\rm for \; a.e}\quad x\in (0,1).
 \end{cases}$$
An argument similar to \cite{Genni} yields the following generation result.
\begin{lemma}\label{generation-result}
	Let conditions {\bf (H1)}-{\bf (H3)} be satisfied, and assume that assumption \ref{Main1} holds. Then the operator $(\mathcal{A},D(\mathcal{A}))$ generates a C$_0$-semigroup $\mathfrak{T}:=(e^{t\mathcal{A}})_{t\geq 0}$ on $K$.
\end{lemma}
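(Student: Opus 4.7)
The plan is to follow the strategy of \cite{Genni}: decompose the operator $\mathcal{A}$ into a maximal dissipative ``sub-part'' $\mathcal{A}_0$ carrying the transport in $(a,s)$, the degenerate diffusion in $x$, and the mortality $\mu_1(a)+\mu_2(s)$ under the \emph{homogeneous} renewal condition $y(x,0,s)=0$, and then to reinstate the nonlocal renewal $y(x,0,s)=\int_0^A\beta(a) y(x,a,s,\cdot) da$ as a perturbation.

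First I would define $\mathcal{A}_0$ on the domain obtained from $D(\mathcal{A})$ by replacing the renewal boundary condition by $y(x,0,s)=0$, and verify that $(\mathcal{A}_0,D(\mathcal{A}_0))$ generates a $C_0$-semigroup on $K=L^{2}_{1/\sigma}(Q_{A,S})$. For this, I would invoke Lumer--Phillips: a direct integration by parts in the weighted space $L^{2}_{1/\sigma}(0,1)$ together with the identity $(L_1 u,u)_{1/\sigma}=-\int_0^1\gamma u_x^2 dx$ provided in the excerpt shows that the $x$-part is dissipative; the $a$-transport yields a nonpositive boundary contribution at $a=0$, the $s$-transport produces a term $\tfrac12 g'(s) y^2$ controlled by $\|g'\|_{L^\infty}$ from {\bf H3}/{\bf H5}, and the mortality $\mu_1+\mu_2\geq 0$ is favourable. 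Hence $\mathcal{A}_0-\omega I$ is dissipative for some $\omega>0$. The range condition $(\lambda I-\mathcal{A}_0) y=f$ for $\lambda$ large is then solved by integrating along the $(a,s)$-characteristics $\tau\mapsto(a+\tau,G^{-1}(G(s)+\tau))$, which reduces the problem, at each ``arrival time'', to a family of degenerate elliptic equations for $\lambda-L_1$ on $(0,1)$ with Dirichlet data; their solvability in $H^{2}_{1/\sigma}(0,1)$ is guaranteed by Assumption \ref{Main1} (see \cite{canard1}).

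The nonlocal renewal is then included by perturbation. Because $\beta\in L^\infty(0,A)$ (from {\bf H6}), the map $P\colon y\mapsto \int_0^A\beta(a) y(\cdot,a,\cdot) da$ is continuous from $K$ into the trace space at $a=0$, and its composition with the boundary trace of $\mathcal{A}_0$ enters the Desch--Schappacher framework of admissible boundary perturbations. Equivalently, one can mimic the standard McKendrick iteration in the Duhamel formula associated to $\mathcal{A}_0$: for fixed $T>0$ the inhomogeneous Cauchy problem with renewal data $\phi(x,s,t)$ defines a bounded solution map $\phi\mapsto y$, and closing the loop $\phi=Py$ yields a Volterra equation in $t$, solvable by contraction since $\beta$ is bounded and the characteristic time along $a$ is finite. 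This produces the semigroup $\mathfrak{T}=(e^{t\mathcal{A}})_{t\ge 0}$ on $K$.

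The main obstacle is the compatibility between the weighted $L^{2}_{1/\sigma}$-structure imposed by the degenerate diffusion and the $(a,s)$-transport: the Lumer--Phillips computation has to be performed with the weight $\tfrac1\sigma$ without losing the favourable signs of the boundary terms at $a=0$, $s=0$, and one must check that the characteristic flow of the $(a,s)$-transport does not interact with the spatial degeneracy of $k$ at $x=0,1$ (so that the Dirichlet condition is preserved fiberwise). Once these weighted identities and the degenerate resolvent solvability from \cite{canard1} are in hand, the remainder is perturbation-theoretic bookkeeping carried out exactly as in \cite{Genni}.
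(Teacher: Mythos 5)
Your proposal is correct and follows exactly the strategy the paper intends: the paper gives no proof of this lemma, only the remark that ``an argument similar to \cite{Genni} yields the result,'' and your sketch (Lumer--Phillips for the dissipative part with homogeneous renewal condition in the weighted space $L^2_{1/\sigma}$, resolvent solvability along the $(a,s)$-characteristics via the degenerate elliptic theory of \cite{canard1}, then reinstating the nonlocal birth condition by a Desch--Schappacher/Volterra boundary perturbation) is precisely the argument carried out in that reference and in Webb's framework. The only minor imprecision is that the favourable sign from the $a$-transport comes from the outflow term at $a=A$ (the term at $a=0$ vanishes by the homogeneous condition), but this does not affect the argument.
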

As such, a similar computation to that in \cite{dy} shows that
\begin{align*}
D(\mathcal{A}^*):= \begin{cases}
	\varphi\in K,\quad \varphi\in L^2((0,A)\times (0,S);H^{2}_{\frac{1}{\sigma}}(0,1)),\cr
	\dfrac{\partial y}{\partial a}+\dfrac{\partial g(s) y}{\partial s}+L_1\varphi-\mu(a,s)\varphi \in K,\cr
	\varphi(1,\cdot,\cdot)=\varphi(0,\cdot,\cdot)=0,\cr
	\varphi(\cdot,A,\cdot)=0,\quad \varphi(\cdot,S,\cdot)=0,
\end{cases}
\end{align*}
and we have, for every $\varphi\in D(\mathcal{A}^*)$,
\begin{align*}
\mathcal{A}^*\varphi=L_1 \varphi+\dfrac{\partial \varphi}{\partial a}+\dfrac{\partial g(s)\varphi}{\partial s}-\mu(a,s)\varphi+\beta(a)\varphi(x,0,s),
\end{align*}
On the other hand, the control operator $\mathcal{B}\in \mathcal{L}(U,K)$ is given for every $u\in U$
$$\mathcal{B}u=mu.$$
With the above notation, one can see that the adjoint problem of \eqref{2cc} is given by:
\begin{equation}
	\left\lbrace\begin{array}{ll}
		\dfrac{\partial q}{\partial t}-\dfrac{\partial q}{\partial a}-\dfrac{\partial g(s)q}{\partial s}-L_1 q+\mu(a,s)q=\beta(a)q(x,0,s,t)&\hbox{ in }Q ,\\ 
		q(0,a,s,t)=q(1,a,s,t)=0&\hbox{ on }\Sigma,\\ 
		q\left( x,A,s,t\right) =0&\hbox{ in } Q_{S,T} \\
		q(x,a,S,t)=0& \hbox{ in } Q_{S,T}\\
		q\left(x,a,s,0\right)=q_0(x,a,s)&
		\hbox{ in }Q_{A,S};
	\end{array}\right.
	\label{112aa}
\end{equation}
where we recall that the operator $(L_1,D(L_1))$ is defined by 
\begin{align*}
L_1y:=k(x)\dfrac{\partial^2 y}{\partial x^2}+b(x)\dfrac{\partial y}{\partial x},\qquad y\in D(L_1):=H^{2}_{\frac{1}{\sigma}}(0,1)\cap H^{1}_{\frac{1}{\sigma}}(0,1).
\end{align*}
Next, consider the operator $P$ defined by
\begin{equation}\label{33}
	P:=-\mu_1(a)-\mu_2(s)+L_1, \qquad {\rm on }\qquad D(\mathcal{A}^*),
\end{equation}
and denotes by $(e^{tP})_{t\geq 0}$ its associated C$_0$-semigroup. 
Since the operator $\mathcal{A}$ generates a C$_0$-semigroup on $K$ (see Lemma \ref{generation-result}) and that $\mathcal{B}$ is a bounded operator. It also follows that the abstract system \eqref{Abstract} is well-posed in the sense that: for every $y_0\in K$ and every $u\in L^2(0,+\infty;U)$, there exists a unique solution $y\in C(0,+\infty;K)$ to \eqref{Abstract} given by the Duhamel formula (see e.g. \cite[Proposition 4.2.5]{b8})
\begin{align}
y(T)=e^{T\mathcal{A}}y_0+\Phi_Tu,\qquad T\geq 0, 
\end{align}
where $\Phi_T$ is the so-called input map of $(\mathcal{A},\mathcal{B})$, that is the linear operator defined for every $u\in L^2(0,+\infty;U)$ by 
\begin{align*}
\Phi_Tu=\int_{0}^{T} e^{(T-s)\mathcal{A}}\mathcal{B}u(s)ds.
\end{align*}
We recall that $(\mathcal{A},\mathcal{B})$ (and hence \eqref{2cc}) is null controllable in time $T>0$ if ${\rm Ran}\, e^{T\mathcal{A}}\subset {\rm Ran}\,\Phi_T$ (see, e.g. \cite[Definition 11.1.1]{b8}). It is also well known that the null controllability of the pair $(\mathcal{A},\mathcal{B})$ is equivalent to the final-state observability of the pair $(\mathcal{A}^*,\mathcal{B}^*)$. More precisely (see e.g. \cite[Theorem 11.2.1]{b8}), $(\mathcal{A},\mathcal{B})$ is null controllable in time $T$ if, and only if, there exists $C_T>0$ such that
\begin{equation}
	\displaystyle\int\limits_{0}^{T}\|\mathcal{B}^*e^{t\mathcal{A}^*}q_{0}\|^2\geq C_{T}^{2}\|e^{T\mathcal{A}^*}q_0\|^2, \qquad \forall\, q_{0}\in D(\mathcal{A}^*),
\end{equation}
where $\mathfrak{T}^{*}:=(e^{t\mathcal{A}^*})_{t\geq 0}$ is the adjoint semigroup of $\mathfrak{T}$ generated by $\mathcal{A}^{*}$. 
\subsubsection{Proof of the main result}
In this part we provide the proof of the main result of this paper, namely Theorem \ref{Main2}. Indeed, in view of \cite[Theorem 11.2.1]{b8} we will mainly perform computations on the adjoint system \eqref{112aa} in the sequel. Then, it is more convenient to restate the result of Theorem \ref{Main2} as follows:

\begin{theorem}\label{Main3}
	Let the assumptions of Theorem \ref{MainTa} be satisfied. Then, for every $q_0\in D(\mathcal{A}^*)$, the pair $(\mathcal{A}^{*},\mathcal{B}^{*})$ is final-state observable in any time $T>A-a_2+a_1+S^{*}_1+S^{*}_2$. In particular, for every $T>A-a_2+a_1+S^{*}_1+S^{*}_2$ there exists $C_T>0$ such that the solution $q$ of \eqref{112aa} satisfies 
	\begin{equation}
		\int\limits_{0}^{S}\int\limits_{0}^{A}\int_{0}^1\dfrac{1}{\sigma(x)}q^2(x,a,s,T)dxdads\leq C_T\displaystyle\int\limits_{0}^{T}\int\limits_{s_1}^{s_2}\int\limits_{a_1}^{a_2}\int_{\omega}\dfrac{1}{\sigma(x)}q^2(x,a,s,t)dxdadsdt.
	\end{equation}
\end{theorem}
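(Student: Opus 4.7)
The strategy parallels that used for Theorem \ref{a2} in Part~I, with the essential simplification that here the renewal term $\beta(a)q(x,0,s,t)$ is \emph{local} in the size variable $s$: newborns inherit the size coordinate of their parent. Consequently, along the backward characteristic $\lambda \mapsto (a-\lambda, G^{-1}(G(s)-\lambda), \lambda)$, the renewal contribution at the boundary $a=0$ keeps the same size coordinate, which is precisely what permits the improved time estimate $T > A - a_2 + a_1 + S_1^* + S_2^*$ in place of $T > A - a_2 + T_1 + T_0$.

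First, I would write the Duhamel--characteristics representation for the adjoint system \eqref{112aa} in direct analogy with \eqref{alp}, with $e^{tR}$ replaced by the semigroup $e^{tP}$ associated with the degenerate spatial operator $P$ defined in \eqref{33}. The fundamental input is the final-state observability inequality for the degenerate parabolic equation localized on $\omega$, available in the weighted space $L^2_{1/\sigma}$ under Assumption \ref{Main2} from the degenerate Carleman estimates of \cite{canard1,canard}. Following the lines of Proposition \ref{Prop1}, by running the characteristics backward from $(x,0,s,t)$ until they either enter the observation region $\omega \times (a_1,a_2) \times (s_1,s_2) \times (0,T)$ or exit through $a=A$ or $s=S$ (where they vanish by the homogeneous adjoint boundary conditions), one would obtain, for $\eta > a_1 + S_2^*$,
\begin{equation*}
\int_{\eta}^{T}\!\int_{0}^{S}\!\int_{0}^{1}\frac{1}{\sigma(x)}q^2(x,0,s,t)\,dx\,ds\,dt \leq C_{\eta,T}\int_{0}^{T}\!\int_{s_1}^{s_2}\!\int_{a_1}^{a_2}\!\int_{\omega}\frac{1}{\sigma(x)}q^2(x,a,s,t)\,dx\,da\,ds\,dt,
\end{equation*}
exploiting the locality of the kernel in $s$: only the growth times between the current size and the thresholds $s_1$, $s_2$ need to be absorbed, rather than a full mixing across sizes as in Part~I.

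Next, I would split $q = u_1 + u_2$ on $(\eta, T)$ exactly as in the proof of Theorem \ref{a2}, with $u_1$ solving the homogeneous adjoint system with initial datum $q(\cdot, \eta)$ and $u_2$ carrying the renewal source $\beta(a) q(x,0,s,t)$ starting from zero. Weighted energy estimates against $1/\sigma$, which are the analogue of Proposition \ref{a5} and rely on the dissipativity of $L_1$ in $L^2_{1/\sigma}$, yield
\begin{equation*}
\int_{\eta}^{T}\!\int_{0}^{S}\!\int_{0}^{A}\!\int_{0}^{1}\frac{1}{\sigma(x)}u_2^2\,dx\,da\,ds\,dt \leq C \int_{\eta}^{T}\!\int_{0}^{S}\!\int_{0}^{1}\frac{1}{\sigma(x)}q^2(x,0,s,t)\,dx\,ds\,dt,
\end{equation*}
which, combined with the previous bound, controls $u_2(\cdot, T)$ by the observation integral. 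For $u_1$, the backward characteristics starting from $(a,s,T)$ with $a$ sufficiently close to $A$ (namely $a > a_2 - \min\{S_1^*,S_2^*\} - \delta$, chosen so that $T - \eta > A - a$) exit the strip $(0,A) \times (0,S)$ before reaching time $\eta$, so $u_1(\cdot, a, s, T) = 0$ on this region. On the complementary slab, split into the pieces with $s \in (s_1, S)$ and with $s \in (0, s_1)$, one invokes the direct analogue of Proposition \ref{a4}, that is, final-state observability of the homogeneous degenerate parabolic--transport system restricted to these slabs, again provided by \cite{canard1,canard}.

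The main technical obstacle will be the careful adaptation of the degenerate Carleman observability estimate of \cite{canard1} to the weighted space $L^2_{1/\sigma}$ and to the sliced problems produced by the characteristic reduction in $(a,s,t)$: the weight $1/\sigma$ must be preserved throughout, and the boundary behaviour at the degeneracy points $x = 0, 1$ has to be reconciled with the transport-induced characteristic traces at $s = s_1$, $s = s_2$ and $a = a_1$, $a = a_2$. Once this is in place, the threshold $T > A - a_2 + a_1 + S_1^* + S_2^*$ is saturated heuristically as follows: $A - a_2$ for an initially almost-maximal-age individual, $a_1$ further units before its size-$0$ descendant enters the fertile age window where it must be observed, and then $S_1^*$ and $S_2^*$ respectively to cover the size gaps between $0$ and $s_1$ and between $s_2$ and $S$ for subsequent generations.
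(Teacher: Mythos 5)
Your overall architecture matches the paper's: the decomposition $q=v_1+v_2$ on $(\eta,T)$, the weighted energy estimate for the source part (Proposition \ref{Propo1}), the estimate of the renewal trace $q(x,0,s,t)$ by backward characteristics feeding into the degenerate-parabolic observability of \cite{canard1,canard}, and the vanishing of the homogeneous part at final time for ages close to $A$. The heuristic for the threshold $T>A-a_2+a_1+S_1^*+S_2^*$ is also the right one.

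There is, however, a genuine gap in your treatment of small sizes. You assert a uniform bound
\[
\int_{\eta}^{T}\int_{0}^{S}\int_{0}^{1}\tfrac{1}{\sigma(x)}q^2(x,0,s,t)\,dx\,ds\,dt
\leq C_{\eta,T}\int_{0}^{T}\int_{s_1}^{s_2}\int_{a_1}^{a_2}\int_{\omega}\tfrac{1}{\sigma(x)}q^2\,dx\,da\,ds\,dt
\]
valid for every $\eta>a_1+S_2^*$. This is false when $S_1^*>a_1+S_2^*$: the backward characteristic issued from $(0,s,t)$ with $s$ close to $0$ must travel a time of order $G(s_1)-G(s)\approx S_1^*$ before its size enters $(s_1,s_2)$, so the trace at age $0$ can only be estimated for $t>\max\{a_1+S_2^*,\,S_1^*-G(s)\}$; a single $\eta$ slightly above $a_1+S_2^*$ does not suffice on all of $(0,S)$. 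This is exactly why the paper's proof splits into the two cases $S_1^*\lessgtr a_1+S_2^*$ and, in the harder case, introduces $\beta^*<s_1$ with $G(s_1)-G(\beta^*)=a_1$, treats $(\beta^*,s_1)$ as you do, but handles $(a_0^*,A)\times(0,\beta^*)$ by a separate argument: it represents $q(x,a,s,T)$ directly through the renewal term along characteristics hitting $a=A$ (the regions $S_T^1$, $S_T^2$), and partitions $(0,\beta^*)$ into finitely many subintervals $(\delta_i,\delta_{i+1})$ with $G(\delta_{i+1})-G(\delta_i)<\delta$ so that on each piece an adapted $\eta=S_1^*+\delta$ works. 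Without this step your estimate of $u_1(\cdot,T)$ on $(a_0^*,A)\times(0,s_1)$ does not close, because you cannot simultaneously take $\eta$ large enough for the trace estimate at small sizes and small enough that $T-\eta>A-a$ on the required age range. (A minor additional point: your backward characteristic should read $\lambda\mapsto(a+T-\lambda,\,G^{-1}(G(s)+T-\lambda),\,\lambda)$; age and size increase as one goes backward in time for the adjoint system.)
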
	 
As in Part 2, we complete the necessary results for the proof of the main result in this part.
\begin{proposition}\label{p2}
Let the assumptions of Theorem \ref{MainTa} be satisfied and let $T>a_1+T_1$. Then for every $q_{0}\in K$, the solution $q$ of the system \eqref{112aa} obeys
	\begin{equation}
\displaystyle\int\limits_{s_1}^{S}\int\limits_{0}^{a_1}\int\limits_{0}^1\dfrac{1}{\sigma(x)}q^2(x,a,s,T)dxdads\leq C_T\displaystyle\int\limits_{0}^{T}\int\limits_{s_1}^{s_2}\int\limits_{a_1}^{a_2}\int\limits_{\l_1}^{l_2}\dfrac{1}{\sigma(x)}q^2(x,a,s,t)dxdadsdt \label{C81}
	\end{equation}
  Moreover, there exists $a^{*}_0\in (0,a_2-S^{*}_1)$ such that for every $q_0\in L^2(\Omega\times (0,A)\times (0,S)),$ the solution $q$ of the system $(\ref{112aa}),$ verifies
  \begin{equation}
\displaystyle\int\limits_{0}^{s_1}\int\limits_{0}^{a^{*}_0}\int\limits_{0}^1\dfrac{1}{\sigma(x)}q^2(x,a,s,T)dxdads\leq C_T\displaystyle\int\limits_{0}^{T}\int\limits_{s_1}^{s_2}\int\limits_{a_1}^{a_2}\int\limits_{\l_1}^{l_2}\dfrac{1}{\sigma(x)}q^2(x,a,s,t)dxdadsdt \label{C81}
	\end{equation}
\end{proposition}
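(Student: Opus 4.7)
The plan is to mimic, step by step, the strategy used for Proposition \ref{a3} in Part I, replacing only the classical parabolic final-state observability by its degenerate counterpart for the operator $L_1$ (available in the weighted space $L^2_{1/\sigma}$ from \cite{canard1,canard}). The key point is that the transport part of \eqref{112aa} does not act on the space variable $x$, so the weight $1/\sigma(x)$ is untouched by the changes of variables below.

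\textbf{Setup.} Introduce the reduced unknown $\tilde{q}(x,a,s,t):=\pi_1(a)\pi_2(s)q(x,a,s,t)$ with $\pi_1(a)=\exp(-\int_0^a\mu_1)$ and $\pi_2(s)=\exp(-\int_0^s\mu_2)$, which removes the zero-order term $\mu_1(a)+\mu_2(s)$. For a target point $(a,s,T)$ and $\lambda\in(0,T)$, consider the backward characteristic of the transport part
\[
\gamma(\lambda) := \bigl(a+T-\lambda,\; G^{-1}(G(s)+T-\lambda)\bigr),
\]
and set $w(\lambda,x):=\tilde{q}(x,\gamma(\lambda),\lambda)$. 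Wherever the nonlocal term $\beta(a+T-\lambda)q(x,0,\cdot,\lambda)$ vanishes along $\gamma$, the function $w$ solves the purely degenerate parabolic equation $\partial_\lambda w - L_1 w = 0$ on $(0,1)$ with Dirichlet boundary data at $x=0,1$.

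\textbf{Proof of \eqref{C81}, part 1 (the region $s\in(s_1,S)$, $a\in(0,a_1)$).} Because $a<a_1<\hat{a}$, hypothesis \textbf{(H7)} gives $\beta(a+T-\lambda)=0$ as long as $a+T-\lambda<\hat{a}$, i.e.\ for $\lambda\in(T-(\hat{a}-a),T)$. The condition $a(\lambda)\in(a_1,a_2)$ translates into $\lambda\in(T-a_2+a,T-a_1+a)$, while $G^{-1}(G(s)+T-\lambda)\in(s_1,s_2)$ translates into $\lambda\in(T-G(s_2)+G(s),T-G(s_1)+G(s))$. Under the standing assumptions $S^*_2<\min\{a_2-a_1,\hat{a}-a_1\}$, $S^*_1<\min\{a_2,\hat{a}\}$, and $T>a_1+T_1$, these two intervals intersect, inside $(T-(\hat{a}-a_1),T)$, in an open subinterval $[\tau_1(a,s),\tau_2(a,s)]$ of length uniformly bounded below for a.e.\ $(a,s)\in(0,a_1)\times(s_1,S)$. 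On this interval $w$ obeys $\partial_\lambda w - L_1 w=0$, so the degenerate parabolic observability of \cite{canard1} applied on $[\tau_1,\tau_2]\times\omega$ yields
\[
\int_0^1 \frac{1}{\sigma(x)} w(T,x)^2\,dx \;\leq\; C\int_{\tau_1}^{\tau_2}\int_{l_1}^{l_2}\frac{1}{\sigma(x)}w(\lambda,x)^2\,dx\,d\lambda.
\]
Integrating in $(a,s)\in(0,a_1)\times(s_1,S)$ and undoing the change of variables $(a,s,\lambda)\mapsto(a+T-\lambda,G^{-1}(G(s)+T-\lambda),\lambda)$ (whose Jacobian is bounded above and below thanks to \textbf{(H5)}) produces \eqref{C81}.

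\textbf{Proof of \eqref{C81}, part 2 (the region $s\in(0,s_1)$, $a\in(0,a^*_0)$).} Choose $a^*_0\in(0,a_2-S^*_1)$ small enough to ensure, in addition, that $a^*_0+S^*_1<\hat{a}$, which is possible since $S^*_1<\hat{a}$ by hypothesis. Repeating the window analysis of the previous step with $s<s_1$ (so that the $s$-coordinate of $\gamma$ crosses $s_1$ at $T-\lambda=G(s_1)-G(s)\le S^*_1$ and exits $(s_1,s_2)$ at $T-\lambda=G(s_2)-G(s)$), one finds again a subinterval $[\tau_1(a,s),\tau_2(a,s)]$ on which $\gamma(\lambda)\in(a_1,a_2)\times(s_1,s_2)$ and $a+T-\lambda<\hat{a}$ (so that $\beta$ vanishes along the characteristic, and $w$ again solves $\partial_\lambda w-L_1 w=0$). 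The same application of the degenerate observability of \cite{canard1} and the same change of variables yield the second inequality.

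\textbf{Main obstacle.} The delicate point is bookkeeping, not a conceptual one: one must check, under the time condition $T>a_1+T_1$ together with the size constraints $S^*_2<\min\{a_2-a_1,\hat{a}-a_1\}$ and $S^*_1<\min\{a_2,\hat{a}\}$, that the $\lambda$-window $[\tau_1(a,s),\tau_2(a,s)]$ has length bounded below uniformly in $(a,s)$ in each of the two regions, and simultaneously lies inside $(T-(\hat{a}-a),T)$ so that the nonlocal term drops out. Once this window is secured, the degenerate parabolic observability of \cite{canard1,canard} in $L^2_{1/\sigma}$ takes over verbatim, because the weight $1/\sigma(x)$ is unaffected by the transport change of variables.
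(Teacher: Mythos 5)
Your overall strategy (reduce along the backward characteristics of the transport part and invoke the degenerate observability of \cite{canard1,canard} in $L^2_{1/\sigma}$ on a time window where the characteristic sits inside $\omega\times(a_1,a_2)\times(s_1,s_2)$) is in the spirit of the paper, but the central claim on which your argument rests is false as stated: the observation window $[\tau_1(a,s),\tau_2(a,s)]$ is \emph{not} nonempty, let alone of uniformly positive length, for a.e.\ $(a,s)$ in the regions considered. Going backward in time both the age $a+T-\lambda$ and the size $G^{-1}(G(s)+T-\lambda)$ increase, so for every $s\in(s_2,S)$ the size coordinate never re-enters $(s_1,s_2)$, and for $s\in(s_1,s_2)$ with $G(s_2)-G(s)<a_1-a$ the characteristic leaves the size band before the age reaches $a_1$; in both cases your window is empty. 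For those points the operative mechanism is entirely different: the characteristic exits the domain through $s=S$ at backward time $G(S)-G(s)$, where the boundary condition $q(x,a,S,t)=0$ applies, and since the age at exit is at most $a_1+S_2^*<\hat a$ the source $\beta(a)q(x,0,s,t)$ vanishes along the whole trajectory, forcing $q(x,a,s,T)=0$ there (this is exactly the content of \eqref{C9} in Proposition \ref{1p}). A complete proof must therefore split each region into characteristics that enter the control box and characteristics that exit through $s=S$; moreover, in the transition zone the window length tends to zero and the constant $c_1e^{c_2/(\tau_2-\tau_1)}$ from the parabolic observability blows up, so one needs either the dichotomy ``window of length at least $\delta$, or exit through $s=S$ with age below $\hat a$'', or a partition argument of the type used in the appendix (the subdivision $\delta_0<\dots<\delta_M$). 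The same issue appears in your second region: when $a$ is near $0$ and $a_1>G(s_2)-G(s_1)$ the characteristic crosses the whole size band before reaching age $a_1$ and your window is again empty; the paper handles this by the case analysis involving $\beta^*$ and $\gamma^*$.

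It is also worth noting that for the first inequality the paper's own route (in the proof of the Part~I analogue, Proposition \ref{a3}) is different: it does not chase characteristics at all, but treats \eqref{112aa} as an abstract age-structured system and combines the final-state observability of the auxiliary size--diffusion system (Theorem \ref{resum}) with the abstract observability result of \cite{abst}. Your direct characteristics argument could be made to work, but only after the case analysis above is supplied. A minor further point: in \eqref{112aa} the size transport is in divergence form, $\partial_s(g(s)q)=g\,\partial_sq+g'q$, so your reduction $\tilde q=\pi_1\pi_2 q$ leaves a bounded zero-order term $g'(s)$ along the characteristic; this is harmless but should be absorbed into the exponential weight rather than ignored.
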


Now, let us consider the following system
\begin{equation}\label{322}
	\left\lbrace
\begin{array}{ll}
		\dfrac{\partial w}{\partial t}-\dfrac{\partial w}{\partial a}-\dfrac{\partial g(s) w}{\partial s}-L_1 w+\mu(a,s)w=0, &\hbox{ in }Q ,\\ 
		w(0,a,s,t)=w(1,a,s,t)=0, &\hbox{ on }\Sigma,\\ 
		w\left( x,A,s,t\right) =0, &\hbox{ in } Q_{S,T}, \\
		w(x,a,S,t)=0, &\hbox{ in } Q_{S,T},\\
	w\left(x,a,s,0\right)=w_0,&\hbox{ in }Q_{A,S}.
	\end{array}\right.
\end{equation} 
We also need the following result for the proof of Theorem \ref{Main3}.
\begin{proposition}\label{1p}
	Let the assumption of Theorem \ref{MainTa} be satisfied. Let $s_0\in(s_1,s_2)$, $T>S^{*}_{2}$ and $a_1<a_0<a_2-S^{*}_{2}$ Then, there exists a constant $C_T>0$ such that the solution $w$ of the system \eqref{322} satisfies the following inequality
	\begin{equation}\label{C8}
		\displaystyle\int\limits_{s_1}^{S}\int\limits_{a_1}^{a_0}\int\limits_{0}^1\dfrac{1}{\sigma(x)}w^2(x,a,s,T)dxdads\leq C_T\displaystyle\int\limits_{0}^{T}\int\limits_{s_1}^{s_2}\int\limits_{a_1}^{a_2}\int_{\omega}\dfrac{1}{\sigma(x)}w^2(x,a,s,t)dxdadsdt.
	\end{equation}
	Moreover,
	\begin{align}\label{C9}
		w(x,a,s,T)=0,\quad\hbox{ a.e. in }(0,1)\times (a_1,A)\times (s_{2},S).
	\end{align}
\end{proposition}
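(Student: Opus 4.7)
The plan is to mirror the strategy of Proposition \ref{a4} from Part I, replacing the classical parabolic observability inequality by its counterpart for the degenerate operator $L_1$ in the weighted space $L^{2}_{1/\sigma}$ proved in \cite{canard1, canard}. The argument splits into two steps matching the two assertions of the statement.

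For the vanishing claim \eqref{C9}, I would fix $(a,s)\in(a_1,A)\times(s_2,S)$ and follow the backward characteristic
\[
\gamma(\tau) := \bigl(a+\tau,\; G^{-1}(G(s)+\tau),\; T-\tau\bigr),\qquad \tau \ge 0.
\]
Since $s > s_2$ gives $G(S) - G(s) < S^{*}_{2} < T$, the curve $\gamma$ exits the cylinder $(0,A)\times(0,S)\times(0,T)$ through $\{a=A\}$ or $\{s=S\}$ at some positive time, where $w$ vanishes by the boundary conditions of \eqref{322}. Setting $\hat{W}(x,t) := w(x,\gamma(T-t))$, a direct computation from \eqref{322} shows that $\hat{W}$ satisfies a linear degenerate parabolic equation in $(x,t)$ of the form
\[
\partial_t \hat{W} - L_1 \hat{W} + \bigl(\mu(\gamma(T-t)) - g'(\gamma(T-t))\bigr)\hat{W} = 0
\]
with Dirichlet boundary conditions at $x=0,1$ and zero datum at the exit time. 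Uniqueness for this problem, which follows from Theorem \ref{MainTb} applied with $u=0$, forces $w(x,a,s,T) = \hat{W}(x,T) = 0$, proving \eqref{C9}.

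For the observability estimate \eqref{C8}, Step 1 reduces the left-hand side to an integral over $(a,s)\in(a_1,a_0)\times(s_1,s_2)$. For such $(a,s)$ I would introduce the window
\[
\tau^{*}(a,s) := \min\bigl(a_2 - a,\; G(s_2) - G(s)\bigr) > 0,
\]
during which the backward characteristic from $(a,s,T)$ lies in the age--size control cube $(a_1,a_2)\times(s_1,s_2)$; the assumption $a_0 < a_2 - S^{*}_{2}$ forces $a_2 - a > S^{*}_{2}$ on the integration range. On the corresponding time interval, $\hat{W}$ again solves the above degenerate parabolic equation in $(x,t)$, and the weighted observability estimate for $L_1$ from \cite{canard1, canard} yields a constant $C>0$ with
\[
\int\limits_{0}^{1}\frac{1}{\sigma(x)}\hat{W}^{2}(x,T)\,dx \le C\int\limits_{T - \tau^{*}}^{T}\int\limits_{\omega}\frac{1}{\sigma(x)}\hat{W}^{2}(x,t)\,dx\,dt.
\]
Translating back to the variables $(x,a,s,t)$, integrating over $(a_1,a_0)\times(s_1,s_2)$, and applying the change of variables $(a,s)\mapsto(a+T-t,\,G^{-1}(G(s)+T-t))$ whose Jacobian is bounded above and below thanks to \textbf{H5}, produces \eqref{C8}.

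The main obstacle is securing a constant $C_T$ uniform in $(a,s)$, since $\tau^{*}(a,s)\to 0$ as $s\to s_2$ and the observability constant for the degenerate heat operator degenerates accordingly. I would handle this exactly as in Proposition 3.6 of \cite{abst}: partition $(s_1,s_2)$ into a compact sub-strip $(s_1,s_2 - \delta)$ on which $\tau^{*}$ is uniformly bounded below and the observability constant from \cite{canard1, canard} is uniform, and a thin residual strip $(s_2 - \delta,s_2)$ that is absorbed by iterating the same characteristic argument together with the terminal vanishing established in Step 1.
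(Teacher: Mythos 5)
Your argument is correct and follows the same route the paper intends: the vanishing \eqref{C9} is obtained from backward characteristics exiting through $\{s=S\}$ at a positive time (since $G(S)-G(s)<S^{*}_{2}<T$ for $s>s_2$), and the estimate \eqref{C8} from the characteristics method combined with the weighted observability inequality for the degenerate operator $L_1$ from \cite{canard1,canard}, exactly as the paper does for the non-degenerate analogue (Proposition \ref{a4}, which is deferred to Proposition 3.6 of \cite{abst}). You in fact supply more detail than the paper, whose written proof of \eqref{C8} consists only of a heading; your handling of the degenerating observation window near $s=s_2$ (a uniform lower bound on a compact sub-strip plus terminal vanishing on the thin residual strip) is a sound way to close that gap.
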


\begin{proof}[Proof of Proposition \ref{1p}]
	
The proof is provided in two parts:

\underline{\textbf{1)- Proof of the inequality $\eqref{C8}$:}}
\underline{\textbf{2)- Proof of the equality \eqref{C9}:}}
To this end, let $v$ a function so that 
\begin{equation}\label{xx1}
	\begin{cases}
		\dfrac{\partial v}{\partial t}-\dfrac{\partial v}{\partial a}-\dfrac{\partial g(s)v}{\partial s}-L_1 u+\mu(a,s)v=0,&\hbox{ in }(0,1)\times (0,A)\times(0,S)\times (\eta,T) ,\\ 
		v(0,a,s,t)=v(1,a,s,t)=0,&\hbox{ on }(0,A)\times(0,S)\times (\eta,T),\\ 
		v\left( x,A,s,t\right) =0,&\hbox{ in } (0,1)\times(0,S)\times (\eta,T) \\
		v(x,a,S,t)=0,&\hbox{ in }(0,1)\times (0,A)\times (\eta,T)\\
		v(x,a,s,\eta)=w_{\eta},&\hbox{ in }(0,1)\times (0,A)\times(0,S),
\end{cases}
\end{equation} 
where $w_{\eta}=w(x,a,s,\eta)$ in $(0,1)\times(0,A)\times(0,S)$. It is quite obvious that the solution $w$ of \eqref{322} satisfies \[w(x,a,s,t)=v(x,a,s,t),\quad\hbox{ a.e. in }(0,1)\times(0,A)\times(0,S)\times(0,T).\]
Since $T>\int_{s_2}^{S}\frac{ds}{g(s)}=S^*_2$.
Then, \[T>\int_{s}^{S}\frac{ds}{g(s)}, \quad \hbox{ for all } \quad s\in (s_2,S),\] thus, using the method of characteristics \[w(x,a,s,T)=0,\quad\hbox{ a.e. in }\quad(0,1)\times (0,A)\times (s_2,S).\] 
This complete the proof of Proposition \ref{1p}.
\end{proof}

After such a long but necessary preparation, we can now clearly prove Theorem \ref{Main3}. To this end, let $q=v_1+v_2$ with $v_1$ and $v_2$ are functions satisfying, respectively,
\begin{equation}\label{3220}
	\begin{cases}
		\dfrac{\partial v_1}{\partial t}-\dfrac{\partial v_1}{\partial a}-\dfrac{\partial g(s) v_1}{\partial s}-L_1 v_1+\mu(a,s)v_1=0,& \hbox{ in }\Omega\times(0,A)\times (s_1,S) \times(\eta,T) ,\\ 
		v_1(0,a,s,t)=v_1(1,a,s,t)=0,&\hbox{ on }(0,A)\times(s_1,S)\times (\eta,T),,\\ 
		v_1\left( x,A,s,t\right) =0,&\hbox{ in } (0,1)\times(s_1,S)\times (\eta,T) \\
		v_1(x,a,S,t)=0,&\hbox{ in }\Omega\times(0,A)\times (\eta,T)\\
		v_1\left(x,a,s,\eta\right)=q_{\eta},&\hbox{ in }\Omega\times(0,A)\times (s_1,S) ,
	\end{cases}
\end{equation} 
where $q_{\eta}=q(x,a,s,\eta)$ and
\begin{equation}\label{3221}
	\begin{cases}
		\dfrac{\partial v_2}{\partial t}-\dfrac{\partial v_2}{\partial a}-\dfrac{\partial g(s)v_2}{\partial s}-L v_2+\mu(a,s)v_2=V, &\hbox{ in }\Omega\times(0,A)\times (s_1,S)\times(\eta,T),\\ 
		v_2(0,a,s,t)=v_2(1,a,s,t)=0, &\hbox{ on } (0,A)\times(s_1,S)\times (\eta,T),\\
		v_2\left( x,A,s,t\right) =0, &\hbox{ in } (0,1)\times(s_1,S)\times (\eta,T) \\
		v_2(x,a,S,t)=0,&\hbox{ in } \Omega\times(0,A)\times (\eta,T)\\
		v_2\left(x,a,s,\eta\right)=0, &\hbox{ in }\Omega\times(0,A)\times (s_1,S) .
	\end{cases}
\end{equation}
where $V(x,a,s,t)=\beta(a)q(x,0,s,t)$. According to Duhamel’s formula we clearly have \[v_2(x,a,s,t)=\int_{\eta}^{t}e^{(t-r)\mathcal{A}^{*}}V(\cdot,\cdot,\cdot,r)dr,\]
where we recall that $\mathfrak{T}^{*}:=(e^{t\mathcal{A}^*})_{t\geq 0}$ is the adjoint semigroup of $\mathfrak{T}$ generated by $\mathcal{A}^{*}$. Moreover, the solution $v_2$ of the system \eqref{3221} satisfies the following estimate.
\begin{proposition}\label{Propo1}
	Let the assumption of Theorem \ref{MainTa} be satisfied. Then, the solution $v_2$ of the system \eqref{3221} satisfies
	\begin{align}
		\int\limits_{\eta}^{T}\int\limits_{s_1}^{s_2}\int\limits_{a_1}^{a_2}\int_{\omega}\dfrac{1}{\sigma(x)}v_{2}^2(x,a,s,t)dxdadsdt&\leq \int\limits_{\eta}^{T}\int\limits_{s_1}^{S}\int\limits_{0}^{A}\int\limits_{0}^{1}\dfrac{1}{\sigma(x)}v_{2}^2(x,a,s,t)dxdadsdt\nonumber\\
		&\leq C \int\limits_{\eta}^{T}\int\limits_{s_1}^{S}\int\limits_{0}^{1}\dfrac{1}{\sigma(x)}q^2(x,0,s,t)dxdsdt,
	\end{align}
where $C=e^{\frac{3}{2}T}\|\beta\|^{2}_{\infty}A$. 
\end{proposition}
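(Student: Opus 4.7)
The first inequality is trivial since $\omega \times (a_1,a_2) \times (s_1,s_2) \subset (0,1) \times (0,A) \times (s_1,S)$ and the integrand is nonnegative, so I focus on the second inequality, which follows the multiplier strategy of Proposition \ref{a5} adapted to the weighted degenerate setting.

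The plan is, first, to perform the change of unknown $v_2 = e^{\lambda t}\hat{v}_2$ for some $\lambda > 0$ to be chosen. Then $\hat{v}_2$ satisfies the system \eqref{3221} with $\mu(a,s)$ replaced by $\mu(a,s)+\lambda$ and the source replaced by $e^{-\lambda t}\beta(a)q(x,0,s,t)$. I then multiply this modified equation by $\hat{v}_2/\sigma(x)$ and integrate over $(0,1)\times(0,A)\times(s_1,S)\times(\eta,T)$, integrating each term by parts using the boundary data of \eqref{3221}.

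The time derivative produces $\frac{1}{2}\int \hat{v}_2^2(T)/\sigma$ (the value at $t=\eta$ vanishes). The age transport $-\partial_a$ contributes the nonnegative boundary term $\frac{1}{2}\int \hat{v}_2^2(x,0,s,t)/\sigma\,dx\,ds\,dt$ (the $a=A$ term vanishes). The size transport $-\partial_s(g(s)\hat{v}_2)$ splits into the nonnegative boundary term $\frac{1}{2}g(s_1)\int \hat{v}_2^2(x,a,s_1,t)/\sigma\,dx\,da\,dt$ (the $s=S$ term vanishes) plus the bulk contribution $-\frac{1}{2}\int (g'(s)/\sigma)\hat{v}_2^2$. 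The degenerate diffusion $-L_1\hat{v}_2$ produces the nonnegative term $\int \gamma(\hat{v}_{2,x})^2$ via the identity $(L_1 u,v)_{1/\sigma} = -\int_0^1 \gamma u_x v_x\,dx$ recalled before \eqref{3221}. Collecting everything, dropping all nonnegative contributions, and bounding the source through Young's inequality $|\beta(a)q\,\hat{v}_2| \le \frac{1}{2}\hat{v}_2^2 + \frac{1}{2}\|\beta\|_{\infty}^2 q^2$, the choice $\lambda = \frac{3}{2}+\frac{1}{2}\|g'\|_{\infty}$ guarantees that the effective coefficient of $\hat{v}_2^2/\sigma$ on the left is at least $1$. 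Since $q(x,0,s,t)$ does not depend on $a$, the $a$-integration yields the factor $A$, and reverting from $\hat{v}_2$ to $v_2$ gives the constant $C = e^{\frac{3}{2}T}\|\beta\|_{\infty}^2 A$ announced in the statement, exactly as in the non-degenerate Proposition \ref{a5}.

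The only non-routine step compared to Proposition \ref{a5} is the integration by parts for the degenerate operator $L_1$ in the $1/\sigma$-weighted inner product; this is precisely the identity $(L_1 u,v)_{1/\sigma} = -\int_0^1 \gamma u_x v_x\,dx$ already stated in the weighted framework, combined with the fact that $v_2(t,\cdot)\in D(\mathcal{A}^*)$ for a.e.\ $t$, so that the degenerate boundary contributions at $x=0,1$ vanish. Once this algebraic identity is invoked, the remainder is the same weighted $L^2$ bookkeeping as in Part I, and no additional ingredient beyond what has already been set up in Subsection \ref{Sec2} is required.
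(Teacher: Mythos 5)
Your proposal is correct and follows essentially the argument the paper intends: the paper gives no explicit proof of Proposition \ref{Propo1}, but your energy-estimate with the exponential shift $v_2=e^{\lambda t}\hat v_2$, the multiplier $\hat v_2/\sigma$, and the choice $\lambda=\tfrac32+\tfrac12\|g'\|_\infty$ is exactly the adaptation of the paper's proof of Proposition \ref{a5} to the weighted degenerate setting, with the only new ingredient being the identity $(L_1u,v)_{\frac{1}{\sigma}}=-\int_0^1\gamma u_xv_x\,dx$, which you invoke correctly. The minor discrepancy in the final constant ($e^{\frac32 T}$ versus the $e^{2\lambda T}$ that the computation literally produces) is inherited from the paper itself and is immaterial to the estimate.
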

\subsection{Proof of the Theorem \ref{MainTa}}
\begin{proof}[Proof of the Theorem \ref{MainTa}] We split the proof into two parts. 
	
	
    We split the term to be estimated as follows:
	\[\int\limits_{0}^{S}\int\limits_{0}^{A}\int\limits_{0}^{1}\dfrac{1}{\sigma(x)}q^2(x,a,s,T)dxdads=\int\limits_{0}^{S}\int\limits_{0}^{a_1}\int\limits_{0}^{1}\dfrac{1}{\sigma(x)}q^2(x,a,s,T)dxdads+\int\limits_{0}^{S}\int\limits_{a_1}^{A}\int\limits_{0}^{1}\dfrac{1}{\sigma(x)}q^2(x,a,s,T)dxdads.\]
 \underline{	\textbf{Upper bound on $(s_1,S)$:}}
According to Proposition \ref{p2}, we have 
	\begin{equation}\label{coupp}
		\int\limits_{s_1}^{S}\int\limits_{0}^{a_1}\int\limits_{0}^{1}\dfrac{1}{\sigma(x)}q^2(x,a,s,T)dxdads\leq C_T\int\limits_{0}^{T}\int\limits_{s_1}^{s_2}\int\limits_{a_1}^{a_2}\int\limits_{l_1}^{l_2}\dfrac{1}{\sigma(x)}q^2(x,a,s,t)dxdadsdt.
	\end{equation}
	So it remains to estimate the following term \[\int\limits_{s_1}^{S}\int\limits_{a_1}^{A}\int\limits_{0}^{1}\dfrac{1}{\sigma(x)}q^2(x,a,s,T)dxdads=\int\limits_{s_1}^{S}\int\limits_{0}^{a_1}\int\limits_{0}^{1}\dfrac{1}{\sigma(x)}q^2(x,a,s,T)dxdads+\int\limits_{s_1}^{S}\int\limits_{a_1}^{A}\int\limits_{0}^{1}\dfrac{1}{\sigma(x)}q^2(x,a,s,T)dxdads.\]
Since $q=v_1+v_2$, we need to estimate 
	\[\int\limits_{s_1}^{S}\int\limits_{a_1}^{A}\int\limits_{0}^{1}\dfrac{1}{\sigma(x)}v_{1}^{2}(x,a,s,T)dxdads+\int\limits_{s_1}^{S}\int\limits_{a_1}^{A}\int\limits_{0}^{1}\dfrac{1}{\sigma(x)}v_{2}^{2}(x,a,s,T)dxdads;\]
	In view of Proposition \ref{Propo1}, we have    \[\int\limits_{s_1}^{S}\int\limits_{a_1}^{A}\int\limits_{0}^{1}\dfrac{1}{\sigma(x)}v_{2}^{2}(x,a,s,T)dxdads\leq C(A-a_2)\int\limits_{\eta}^{T}\int\limits_{s_1}^{S}\int\limits_{0}^{1}\dfrac{1}{\sigma(x)}q^2(x,0,s,t)dxdsdt.\]
	Proposition \ref{Prop1} further yields that
	\begin{equation}\label{debay}
		\int\limits_{s_1}^{S}\int\limits_{a_1}^{A}\int\limits_{0}^{1}\dfrac{1}{\sigma(x)}v_{2}^{2}(x,a,s,T)dxdads\leq C_{\eta,T}\int\limits_{0}^{T}\int\limits_{s_1}^{s_2}\int\limits_{a_1}^{a_2}\int_{\omega}\dfrac{1}{\sigma(x)}q^2(x,a,s,t)dxdadsdt.
	\end{equation}
	Since $T>A-a_2+a_1+S^{*}_{1}+S^{*}_{2}$, then there exists $\delta>0$ such that 
	$$T>A-a_2+a_1+S^{*}_{1}+S^{*}_{2}+2\delta.$$ 
	Therefore,
	 $$T-\left(a_1+S^{*}_{2}+\delta\right)>A-\left(a_2-\left(S^{*}_{1}+\delta\right)\right).$$
  By noting that for \( s \) in \([s_1, S]\), \(\eta\) can be taken in the form \( a_1 + S_{2}^* + \delta \).
	For every $a\in\left(a_2,A\right) $, we have\[T-\eta>A-\left(a_2-\left(S^{*}_{1}+\delta\right)\right)>A-a,\] where $\eta=a_1+S^{*}_{2}+\delta.$
	Then \[v_1(x,a,s,T)=0\hbox{ a.e. in }\quad(0,1)\times \left(a_2,A\right)\times (s_1,s_2).\]
	It follows that
	\[\int\limits_{s_1}^{s_2}\int\limits_{a_1}^{A}\int\limits_{0}^{1}\dfrac{1}{\sigma(x)}v_{1}^2(x,a,s,T)dxdads\leq C_{\eta,T}\int\limits_{\eta}^{T}\int\limits_{s_1}^{s_2}\int\limits_{a_1}^{a_2}\int_{\omega}\dfrac{1}{\sigma(x)}v_{1}^{2}(x,a,s,t)dxdadsdt.\]
	Moreover, as $T-S_{1}^*>S_{2}^*,$ then according to Proposition \ref{1p}, \[v_1(x,a,s,T)=0\hbox{ a.e. in }\quad(0,1)\times \left(a_2,A\right)\times (s_2,S).\]
	\begin{equation}
		\int\limits_{s_1}^{S}\int\limits_{a_1}^{A}\int\limits_{0}^{1}\dfrac{1}{\sigma(x)}v_{1}^{2}(x,a,s,T)dxdads\leq C\int\limits_{\eta}^{T}\int\limits_{s_1}^{s_2}\int\limits_{a_1}^{a_2}\int_{\omega}\dfrac{1}{\sigma(x)}v_{1}^{2}(x,a,s,t)dxdadsdt.\label{deba}
	\end{equation}
	The fact that $q=v_1+v_2\Longleftrightarrow v_1=q-v_2$ yields  
	\begin{align}\label{deba1}
		\int\limits_{s_1}^{S}\int\limits_{a_1}^{A}\int\limits_0^{1}\dfrac{1}{\sigma(x)}v_{1}^{2}(x,a,s,T)dxdads\leq 2\left(\int\limits_{\eta}^{T}\int\limits_{s_1}^{s_2}\int\limits_{a_1}^{a_2}\int\limits_{l_1}^{l_2}\dfrac{1}{\sigma(x)}q^{2}(x,a,s,t)dQ+\int\limits_{\eta}^{T}\int\limits_{s_1}^{s_2}\int\limits_{a_1}^{a_2}\int\limits_{l_1}^{l_2}\dfrac{1}{\sigma(x)}v_{2}^{2}(x,a,s,t)dQ\right),
	\end{align}
where we have set $dQ=dxdadsdt$. 
	Thus, by Proposition \ref{1p}, we get the following. 
	\begin{equation}\label{deba11}
		\int\limits_{s_1}^{S}\int\limits_{a_1}^{A}\int\limits_{0}^{1}\dfrac{1}{\sigma(x)}v_{1}^{2}(x,a,s,T)dxdads\leq C\int\limits_{0}^{T}\int\limits_{s_1}^{s_2}\int\limits_{a_1}^{a_2}\int_{\omega}\dfrac{1}{\sigma(x)}q^{2}(x,a,s,t)dxdadsdt.
	\end{equation}
for a constant $C_{\eta,T}>0$. Inequalities $(\ref{debay})$ and $(\ref{deba11})$ yield the following. 
	\begin{equation}
		\int\limits_{s_1}^{S}\int\limits_{a_1}^{A}\int\limits_{0}^{1}\dfrac{1}{\sigma(x)}q^{2}(x,a,s,T)dxdads\leq C_{\eta,T} \int\limits_{0}^{T}\int\limits_{s_1}^{s_2}\int\limits_{a_1}^{a_2}\int_{\omega}\dfrac{1}{\sigma(x)}q^{2}(x,a,s,t)dxdadsdt.\label{coup1}
	\end{equation}
for a constant $C_{T}>0$. Finally, $(\ref{coupp})$ and $(\ref{coup1})$ yield
	\begin{equation}
		\int\limits_{s_1}^{S}\int\limits_{0}^{A}\int\limits_{0}^{1}\dfrac{1}{\sigma(x)}q^{2}(x,a,s,T)dxdads\leq C_T \int\limits_{0}^{T}\int\limits_{s_1}^{s_2}\int\limits_{a_1}^{a_2}\int_{\omega}\dfrac{1}{\sigma(x)}q^{2}(x,a,s,t)dxdadsdt.\label{coup}
	\end{equation}
\underline{	\textbf{Upper bound on $(0,s_1)$: }}
	We split the term to be estimated as follows
	\[\int\limits_{0}^{s_1}\int\limits_{0}^{A}\int\limits_0^1\dfrac{1}{\sigma(x)}q^2(x,a,s,T)dxdads=\int\limits_{0}^{s_1}\int\limits_{0}^{a^{*}_0}\int\limits_0^1\dfrac{1}{\sigma(x)}q^2(x,a,s,T)dxdads+\int\limits_{0}^{s_1}\int\limits_{a^*_0}^{A}\int\limits_0^1\dfrac{1}{\sigma(x)}q^2(x,a,s,T)dxdads,\] with $a^*_0$ defined in Proposition \ref{p2}.\smallbreak
	According to Proposition \ref{p2}, we have
	\begin{equation}\int\limits_{0}^{s_1}\int\limits_{0}^{a_0^*}\int\limits_0^1\dfrac{1}{\sigma(x)}q^2(x,a,s,T)dxdads\leq C\int\limits_{0}^{T}\int\limits_{s_1}^{s_2}\int\limits_{a_1}^{a_2}\int_{\omega}\dfrac{1}{\sigma(x)}q^2(x,a,s,t)dxdadsdt.\label{coupp1}\end{equation}
	Since $q=v_1+v_2$ one need to estimate  
	\[\int\limits_{0}^{s_1}\int\limits_{a_0^*}^{A}\int_{\Omega}\dfrac{1}{\sigma(x)}v_{1}^{2}(x,a,s,T)dxdads+\int\limits_{0}^{s_1}\int\limits_{a^*_0}^{A}\int_{\Omega}\dfrac{1}{\sigma(x)}v_{2}^{2}(x,a,s,T)dxdads.\]
 From Proposition \ref{Propo1} with the variable $s\in (0,S)$, we obtain
 \[\int\limits_{0}^{s_1}\int\limits_{a^*_0}^{A}\int\limits_{0}^{1}\dfrac{1}{\sigma(x)}v_{2}^{2}(x,a,s,T)dxdads\leq C_{\eta,A}\int\limits_{\eta}^{T}\int\limits_{0}^{S}\int\limits_{0}^{1}\dfrac{1}{\sigma(x)}q^2(x,0,s,t)dxdsdt.\]
 Proposition \ref{Prop1} further yields that
	\begin{equation}
\int\limits_{0}^{s_1}\int\limits_{a^*_0}^{A}\int\limits_{0}^{1}\dfrac{1}{\sigma(x)}v_{2}^{2}(x,a,s,T)dxdads\leq C_{\eta,T}\int\limits_{0}^{T}\int\limits_{s_1}^{s_2}\int\limits_{a_1}^{a_2}\int_{\omega}\dfrac{1}{\sigma(x)}q^2(x,a,s,t)dxdadsdt.
  \label{zum1}
	\end{equation}
 Given the time $T>A-a_2+a_1+S^*_1+S^*_2$ and that $T_1=\max\{a_1+S^*_2,S^*_1\}$, two situations can arise for the estimation of \[\int\limits_{0}^{s_1}\int\limits_{a^*_0}^{A}\int\limits_{0}^{1}\dfrac{1}{\sigma(x)}v_{1}^{2}(x,a,s,T)dxdads.\]
 \underline{\textbf{Part 1: the case $S^{*}_{1}<a_1+S^{*}_{2}=T_1$.}}
In this case, the estimation is simpler because \[T-\eta>A-a_0^*>A-a\hbox{ for all }a\in (a_0^*,A) ;\] that imply \begin{equation} 
v_1(x,a,s,T)=0\hbox{ a.e. in }\Omega \times (a_0^*,A)\times(0,s_1).\label{zum}
\end{equation}.
The inequalities \eqref{zum1} and \eqref{zum} give \begin{equation}\int\limits_{0}^{s_1}\int\limits_{0}^{a_0^*}\int\limits_0^1\dfrac{1}{\sigma(x)}q^2(x,a,s,T)dxdads\leq C\int\limits_{0}^{T}\int\limits_{s_1}^{s_2}\int\limits_{a_1}^{a_2}\int_{\omega}\dfrac{1}{\sigma(x)}q^2(x,a,s,t)dxdadsdt.\label{cap}
\end{equation}
Finally, combining \eqref{coupp1} and  \eqref{cap} we obtain 
\begin{equation}\int\limits_{0}^{s_1}\int\limits_{0}^{A}\int\limits_0^1\dfrac{1}{\sigma(x)}q^2(x,a,s,T)dxdads\leq C\int\limits_{0}^{T}\int\limits_{s_1}^{s_2}\int\limits_{a_1}^{a_2}\int_{\omega}\dfrac{1}{\sigma(x)}q^2(x,a,s,t)dxdadsdt.\label{alpha}\end{equation}
\underline{\textbf{Part 2: the case $S^{*}_{1}>a_1+S^{*}_{2}$}}.
Then there exists $\beta^*< s_1$ such that $G(s_1)-G(\beta^*)=a_1.$ Therefore, we split $(0, s_1)$ as follows $ (0.s_1)= (0, \beta^*) \cup (\beta^*, s_1)$.\smallbreak 
We notice that, if \[(a,s)\in (a_0^*,A)\times (\beta^*,s_1)\hbox{ then }S^*_1-G(s)\leq a_1+S_2^*,\] and by proceeding as above, we arrive at the desired estimate.\smallbreak
Now, we will estimate in this part \[q(.,a,s,T)\hbox{ for } (a,s)\in \left(a_0^*,A\right)\times (0,\beta^*)\] in the case where $T>A-a_2+S^{*}_{2}+a_1+S^{*}_{1}.$\smallbreak For it, we split this section $(a_0^*,A)\times (0,\beta^*)$ as follows: \begin{align*}S_{T}^{1}=\{(a,s,T)\hbox{ such that }0<s<\beta^* \hbox{ and } G(s)+A-G(\beta^*)<a<A\}\end{align*} and
\begin{align*}S_{T}^{2}=\{(a,s,T)\hbox{ such that }0<s<\beta^* \hbox{ and } a^{*}_{0}<a<A-G(\beta^*)+G(s)\}.\end{align*}
All backward characteristics starting at $(a,s,T)$, where $(a,s,T) \in S_T^1$, are replaced by the renewal term with $a$ in $(A-\delta,A)$ and require a minimum of $S_1^* - \delta$ time to enter the observation domain.\smallbreak
In $S^{1}_{T}$ we have
\[q(x,a,s,T)=\int\limits_{T-A+a}^{T}\beta(a+T-l)q(x,0,G^{-1}(G(s)+T-l),l)dl.\]
Then
\[\int_{\Omega}\dfrac{1}{\sigma}q^2(x,a,s,T)\leq A\|\beta\|^{2}_{\infty}\int\limits_{T-A+a}^{T}\int_{\Omega}\dfrac{1}{\sigma}q^2(x,0,G^{-1}(G(s)+T-l),l)dl\]
Let $\delta>0$ be such that $T>S^{*}_{1}+2\delta.$ Subdividing the interval $(0,\beta^*)$ in $M$ intervals as follows:
\[0=\delta_0<\delta_1<\delta_2<\delta_3<...<\delta_M\hbox{ such that }G(\delta_{i+1})-G(\delta_{i})<\delta\hbox{, } i\in\{0,1,...,M-1\},\] with
\[\delta_0=0\hbox{ and }\delta_M=\beta^*\]
We denote by \[S_{T,i}^{1}=\{(a,s,T)\hbox{ such that }\delta_i<s<\delta_{i+1} \hbox{ and } G(s)+A-G(\delta_{i+1})<a<A-G(\delta_i)\}.\]
Moreover, in $S_{T,i},$ \[\max_{S_{T,i}^{1}}\{T-G(S)+G(s),T-A+a\}>T-G(\delta_{i+1})+G(s).\]
Therefore, if $(a,s,T)\in S^{1}_{T,i}\hbox{ and }T>S^{*}_{1}+2\delta,$ then \[\max_{S_{T,i}^{1}}\{T-G(S)+G(s),T-A+a\}>T-G(\delta_{i+1})+G(\delta_i)>S^*_1+\delta.\]  
Moreover, choosing $\eta=S^*_1+\delta$, then $S^{*}_{1}<\eta<T,$ and \[\int\limits_{\eta}^{T}\int\limits_{\delta_i}^{\delta_{i+1}}\int_{\Omega}\dfrac{1}{\sigma}q^2(x,0,s,t)dxdsdt\leq C_{T}\int\limits_{0}^{T}\int\limits_{s_1}^{s_2}\int\limits_{a_1}^{a_2}\int_{\omega}\dfrac{1}{\sigma}q^2(x,a,s,t)dxdadsdt.\] Therefore,
\[\int\limits_{0}^{T}\int\limits_{\delta_i}^{\delta_{i+1}}\dfrac{1}{\sigma}q^2(x,a,s,T)dxads\leq C_T\int\limits_{0}^{T}\int\limits_{s_1}^{s_2}\int\limits_{a_1}^{a_2}\int_{\omega}\dfrac{1}{\sigma}q^2(x,a,s,t)dxdadsdt.\] Then for $T>S^{*}_{1},$ we have
\[\int_{S_{T}^{1}}\dfrac{1}{\sigma}q^2(x,a,s,T)dxdadt\leq C_T\int\limits_{0}^{T}\int\limits_{s_1}^{s_2}\int\limits_{a_1}^{a_2}\int_{\omega}\dfrac{1}{\sigma}q^2(x,a,s,t)dxdadsdt.\]
The same concept will allow us to obtain an estimate of $S_T^2$. By combining the estimates of $S_T=S_T^1\cup S_T^2$ and $(a^*_0,A)\times (\beta^*,s_1),$ we can derive the desired estimate.
\end{proof}
  \begin{figure}[H]
    \begin{adjustbox}{center,left}
        \begin{overpic}[scale=0.25]{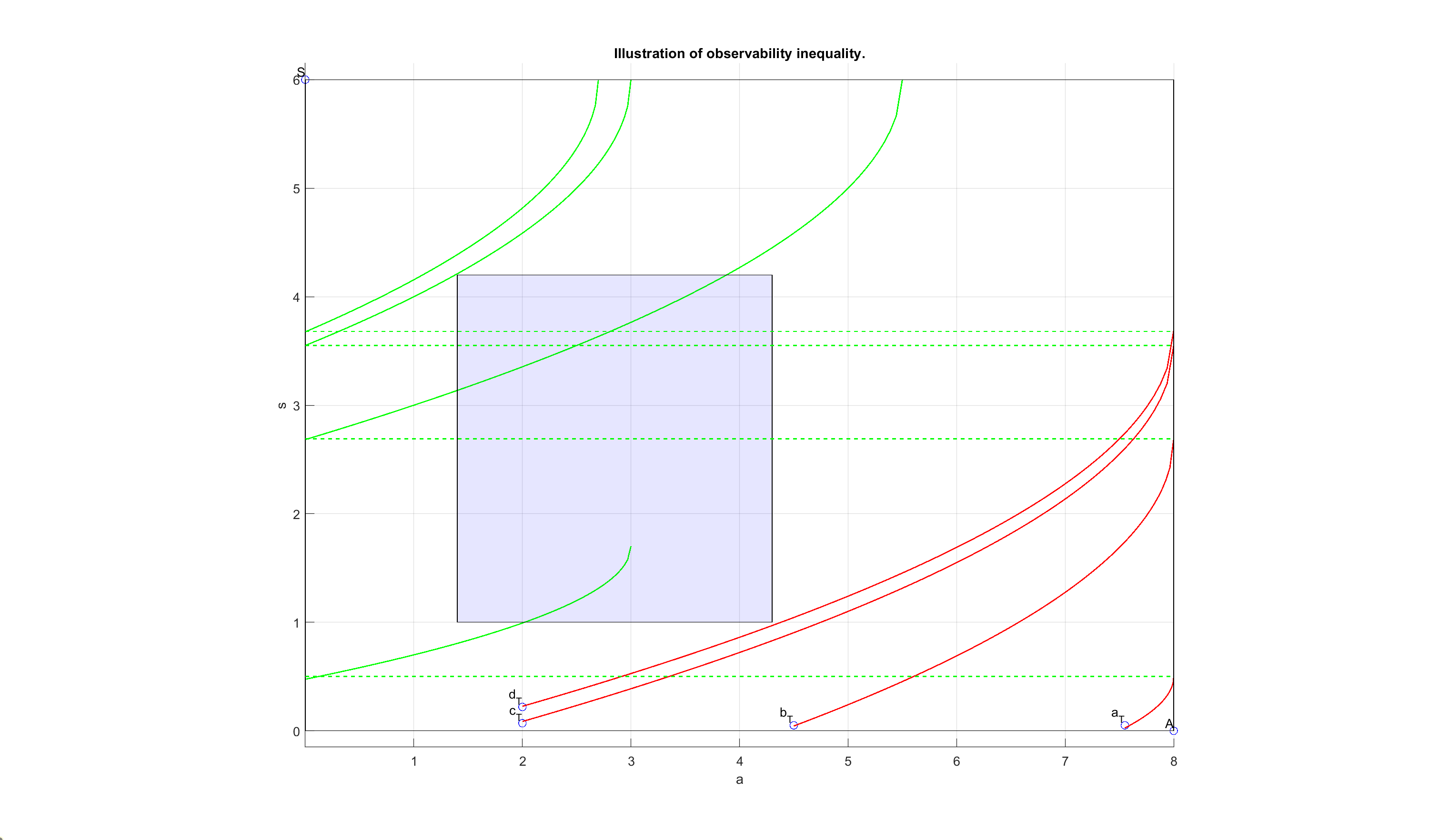}
            
        \end{overpic}
        \end{adjustbox}
    \caption{Here $\hat{a}=a_2$. The backward characteristics starting from $(a,s,T)$ with $a\in (a_0,A)$ (red line) hits the boundary $(a=A)$, gets renewed by the renewal condition $g(s)\beta(a)q(x,0,s,t)$ (who is local) and then enters the observation domain (red line) or without and leave the domain $\Sigma$ by the boundary $s=S$ (green line).	So, with the conditions $T>A-a_2+S^*_2+a_1+S^*_1$ all the characteristics starting at $(a,s,T)$ which hit the section $a=A$ without having encountered the control domain renewed by the renewal condition $g(s)\beta(a)q(x,0,s,t)$ with $t>a_1+S^*_2$ if $s>\beta^*$ and $t>G(s_1)-G(\beta^*)$ if $s<\beta^*$ and enter the observation domain or without all the domain by the boundary $s=S.$ }
\end{figure}
\section{appendix}
\begin{proof}{Proof of Proposition \ref{a7} } The proof of existence is given by Theorem \ref{a8}\smallbreak
	We recall by \[G(s)=\int_{0}^{s}\frac{du}{g(u)}.\]  Let
	\[w(x,\lambda)=q(x,a+t-\lambda,G^{-1}(G(s)+t-\lambda),\lambda)\] and 
	then the function $w$ verifies the following system
	\begin{align}
		\left\lbrace\begin{array}{l}
			w'(x,\lambda)=(-\mu_1(a+t-\lambda-\mu_2(G^{-1}(G(s)+t-\lambda))+L) w(\lambda,x)+ f(x,\lambda)\smallbreak
			\frac{\partial w}{\partial \nu}=0\smallbreak
			w (0,x) = q (x, a + t, G^{-1}(G(s)+t), 0),\end{array}\right.
	\end{align}
	with \[f(\lambda,x)=\int\limits_{0}^{S}\beta(a+t-\lambda,G^{-1}(G(s)+t-\lambda),\hat{s})q(x,0,\hat{s},\lambda)d\hat{s}.\]\smallbreak
	The solution of the homogeneous equation is given by
	\[w_H(\lambda,x)=Ce^{\lambda R}.\]
		We notice that $$q(x,a,s,t)=w(x,t).$$\smallbreak
		\textbf{For the taking into account the initial condition,} we consider the domain $A_1.$ Using the Duhamel formula, we obtain\[q(x,a,s,t)= w(x,t)=e^{tL}w(x,0)+\]\[\int\limits_{0}^{t}\left(e^{(t-\alpha)R}\int\limits_{0}^{S}\beta(a+t-\alpha,G^{-1}(G(s)+t-\alpha),\hat{s})q(x,0,\hat{s},\alpha)d\hat{s}\right)d\alpha.\]
As
	\[w(0,x)=q(x,a+t,G^{-1}(G(s)+t),0)=q_0(x,a+t,G^{-1}(G(s)+t)),\]
then
	\[q(x,a,s,t)=q_{0}(x,a+t,G^{-1}(G(s)+t))e^{t R}+\int\limits_{0}^{t}\left(e^{(t-\alpha)R}\int\limits_{0}^{S}\beta(a+t-\alpha,G^{-1}(G(s)+t-\lambda),\hat{s})q(x,0,\hat{s},\alpha)d\hat{s}\right)d\alpha\]	
	in $A_1.$\smallbreak
	\textbf{Taking into account the boundary condition in $a.$}\smallbreak
	 For the boundary condition in $\{a=A\}$ we use the set
	\[A'_1=\left\{(a,s,t)\in Q' \text{ such that } G(S)-G(s)>t>A-a> 0\text{ or } t>G(S)-G(s)>A-a> 0\right\},\]
	   then using the Duhamel formula (boundary condition in age is $q(x,A,s,t)$) we obtain 
	\begin{equation}
q(x,a,s,t)=e^{(A-a)R}w(t-(A-a),x)+
\int\limits_{t-A+a}^{t}\left(e^{(t-l)R}\int\limits_{0}^{S}\beta(a+t-l,G^{-1}(G(s)+t-\lambda),\hat{s})q(x,0,\hat{s},l)d\hat{s})\right)dl. \label{souff}
\end{equation}
	But $$w(t-(A-a),x)=q(x,A,,a+t-A)=0,$$ then 
	\[q(x,a,s,t)=\int\limits_{t-A+a}^{t}\left(e^{(t-l)R}\int\limits_{0}^{S}\beta(a+t-l,G^{-1}(G(s)+t-\lambda),\hat{s})q(x,0,\hat{s},l)d\hat{s})\right)dl .\]	
	\textbf{Taking into account of the boundary condition in $s.$}\smallbreak
	For the boundary condition in $\{s=S\}$ we use the set \[A'_2=\left\{(a,s,t)\in Q' \text{ such that } A-a>t>G(S)-G(s)> 0\text{ or } t>A-a>G(S)-G(s)> 0\right\},\] then using the Duhamel formula, we obtain:
	\[q(x,a,s,t)=e^{(S-s)R}w(t-G(S)+G(s),x)+\int\limits_{t-G(S)+G(s)}^{t}\left(e^{(t-l)R}\int\limits_{0}^{S}\beta(a+t-l,G^{-1}(G(s)+t-\lambda),\hat{s})q(x,0,\hat{s},l)d\hat{s})\right)dl .\]
	As before   $$w\left(t-G(S)+G(s),x\right)=q\left(x,a+G(S)-G(s),G^{-1}\left(G(s)+G(S)-G(s)\right),t-G(S)+G(s)\right),$$
 \[=q\left(x,a+G(S)-G(s),G^{-1}\left(G(S)\right),t-G(S)+G(s)\right)=0\]
 then
	\begin{equation}
	q(x,a,s,t)=\int\limits_{t-G(S)+G(s)}^{t}\left(e^{(t-l)R}\int\limits_{0}^{S}\beta(a+t-l,(G^{-1}(G(s)+t-\lambda),\hat{s})q(x,0,\hat{s},l)d\hat{s})\right)dl \hbox{ in }A'_2. \label{souf}
	\end{equation} 
	In the rest of the paper, we will adopt the following representation of the solution.
	\begin{align}
		q(t) = 
		\left\lbrace\begin{array}{l}
			 q_0(.,a+t,G^{-1}(G(s)+t))e^{t R}+\displaystyle\int\limits_{0}^{t}\left(e^{(t-l)R}\int\limits_{0}^{S}\beta(a+t-l,G^{-1}(G(s)+t-\lambda),\hat{s})q(x,0,\hat{s},l)d\hat{s})\right)dl \text{ in } A_1, \\
			\displaystyle\int\limits_{\max\left\{t-A+a,  t-G(S)+G(s)\right\}}^{t}\left(e^{(t-l)R}\int\limits_{0}^{S}\beta(a+t-l,G^{-1}(G(s)+t-\lambda),\hat{s})q(x,0,\hat{s},l)d\hat{s})\right)dl \text{ in } A_2.\label{alp}
		\end{array}\right.
		\end{align}
Indeed, we have:
	\[(a,s,t)\in A'_1\Leftrightarrow 0<t-G(S)+G(s)< t+a-A<t\text{ or } t-G(S)+G(s)<0<t+a-A,\] and
	\[(a,s,t)\in A'_2\Leftrightarrow 0<t+a-A<t-G(S)+G(s)<t  \text{ or } t+a-A<0<t-G(S)+G(s),\] So we notice that
\[\max\left\{t-A+a,  t-G(S)+G(s)\right\}=t-A+a \quad in\quad A^{'}_{1}.\] and\[\max\left\{t-A+a,  t-G(S)+G(s)\right\}=t-G(S)+G(s) \quad in\quad A^{'}_{2}.\] Then we obtain:  
	\begin{equation}
		q(x,a,s,t)=\displaystyle\int\limits_{\max\left\{t-A+a,  t-G(S)+G(s)\right\}}^{t}\left(e^{(t-l)R}\int\limits_{0}^{S}\beta(a+t-l,(G^{-1}(G(s)+t-\lambda),\hat{s})q(x,0,\hat{s},l)d\hat{s})\right)dl \text{ in } A_2
	\end{equation}
 \end{proof}
We also required this result, which allowed us in certain situations to significantly simplify the calculations by referring to existing results.
Therefore, in this subsection, we present the following statement regarding null controllability of a linearized Crocco-type model:
\subsection{Null Controllability of an auxiliary system}.
In this part, we assume that the assumptions $(H_3)$ hold. Therefore, we consider the following auxiliary system:
\begin{equation}
\left\{ 
\begin{array}{ccc}
\dfrac{\partial p}{\partial t}+\dfrac{\partial g(s)p}{\partial s} -L p&=\chi_{\Theta}U &\text{ in }Q_{S,T},\\  
\dfrac{\partial p}{\partial \nu}&=0&\text{ on }\bar{\Sigma},\\
p\left(x,0,t\right) &= 0 &\text{ in } \Omega\times (0,T), \\
p\left(x,s,0\right)&=p_{0}&
\text{ in }\Omega\times (0,S),
\end{array}
\right.
\label{3a}
\end{equation}
We define  his corresponding adjoint system given by:
\begin{equation}
\left\{ 
\begin{array}{ccc}
\dfrac{\partial h}{\partial t}-g(s)\dfrac{\partial h}{\partial s}-L h&=0 &\text{ in }Q_{S,T},\\  
\dfrac{\partial h}{\partial \nu}&=0&\text{ on } \bar{\Sigma},\\
h\left(x,S,t\right) &=0&\text{ in }  \Omega\times (0,T), \\
h\left(x,s,T\right)&=h_{T}&
\text{ in } \Omega\times (0,S).
\end{array}
\right.
\label{4a}
\end{equation}
where $h_T\in L^2(\Omega\times (0,S))$ and $\bar{\Sigma}=\partial\Omega\times (0,S)\times (0,T).$\smallbreak
Let $s_1\hbox{ and }s_2$ such that $0\leq s_1<s_2\leq S,$ we have follows
\begin{theorem}\label{resum}
 Assume that the assumptions $(H_3)$ hold.\smallbreak
Then for every $T>T_0$ and for every $p_0\in L^2(\Omega\times (0,S)\times(0,T)) $
 there exists a control $U\in L^2(\omega\times(s_1,s_2))$ such that the solution $p$ of $(\ref{3a})$ satisfies \[ p(x,s,T)=0\hbox{ a.e }x \in \Omega\hbox{ and } s \in (0,S).\]	
By equivalence, the solution $h$ of the system $(\ref{4a})$ is final-state observable for every \[
T > T_0.
\] In other words, for every \[
T > T_0,
\] and for every $h_0\in L^2(\Omega\times(0,S))$ there exists $K_T>0$ such that the solution $h$ of $(\ref{4a})$ satisfies:
\begin{equation}
	\int_{0}^{A}\int_{\Omega}h^{2}(x,s,T)dxds\leq K_T\int_{0}^{T}\int_{s_1}^{s_2}\int_{\omega}h^{2}(x,s,t)dxdsdt \label{A}
\end{equation}
\end{theorem}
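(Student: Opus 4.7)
The argument proceeds by establishing the final-state observability inequality \eqref{A} for the adjoint system \eqref{4a}, which is equivalent to null controllability of the pair associated with \eqref{3a} by the standard Tucsnak--Weiss duality \cite[Theorem~11.2.1]{b8}. The key idea is to combine the method of characteristics in the size variable $s$ with the classical observability estimate for the non-degenerate parabolic operator $\partial_t-L$ on $\Omega$ observed on $\omega$, a technique in the spirit of the approach of \cite{dy} adapted here to a $s$-transport rather than an $a$-transport.

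For each $s\in(0,S)$, introduce the characteristic $\gamma_s(\tau):=G^{-1}(G(s)+\tau)$, which solves $\gamma_s'(\tau)=g(\gamma_s(\tau))$, and set $\tau_*(s):=G(S)-G(s)$ for the time at which $\gamma_s$ reaches $s=S$. Defining the profile along the characteristic by $w_s(x,\tau):=h(x,\gamma_s(\tau),T-\tau)$ and using \eqref{4a}, a direct calculation yields that $w_s$ satisfies a heat-type equation $\partial_\tau w_s+Lw_s=0$ on $\Omega$ with Neumann boundary conditions on $\partial\Omega$, with $w_s(\cdot,0)=h(\cdot,s,T)$, and, whenever $\tau_*(s)<T$, with $w_s(\cdot,\tau_*(s))=0$ owing to $h(\cdot,S,\cdot)=0$. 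Estimating $\int_\Omega h^2(x,s,T)\,dx$ for each $s$ therefore reduces to a final-state estimate for a heat equation on $\Omega$ observed on $\omega$, over the sub-interval of $(0,T)$ during which $\gamma_s(\tau)\in(s_1,s_2)$.

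Splitting $(0,S)=(0,s_1)\cup(s_1,s_2)\cup(s_2,S)$, each piece is treated by a variant of the heat-observability argument: for $s\in(s_1,s_2)$ the characteristic starts in the control strip and the bound follows directly; for $s\in(s_2,S)$ the hypothesis $T>S_2^*$ gives $\tau_*(s)<T$, so that $w_s$ vanishes at $\tau_*(s)$ and Duhamel's formula along $\gamma_s$ combined with the observation during the nontrivial sub-interval when $\gamma_s(\tau)\in(s_1,s_2)$ closes the estimate; for $s\in(0,s_1)$ the hypothesis $T>S_1^*$ ensures that the backward characteristic enters $(s_1,s_2)$ at time $\tau=G(s_1)-G(s)<T$ and remains there for the full duration $G(s_2)-G(s_1)>0$, so that the same heat observability applies. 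Summing over $s$ and converting the line integrals along characteristics into the target integral over $\omega\times(s_1,s_2)\times(0,T)$ via the change of variable with Jacobian $1/g(\gamma_s(\tau))$ produces \eqref{A}. The main obstacle is to ensure that the heat-observability constant is uniformly controlled in $s$ as the width of the time window shrinks near the endpoints $s=s_1$ and $s=s_2$, and to justify the integrability of the Jacobian after the change of variable; both are delivered by assumption \textbf{(H3)}, specifically by $\int_0^S ds/g<\infty$, together with the choice $T>T_0$, which makes every effective time window bounded below by a positive constant.
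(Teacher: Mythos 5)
Your overall strategy --- reducing to final-state observability of \eqref{4a} via duality, following the backward characteristics $\lambda\mapsto G^{-1}(G(s)+T-\lambda)$ along which $h$ solves a heat equation in $\Omega$, applying the classical parabolic observability estimate on the time window during which the characteristic crosses $(s_1,s_2)$, and undoing the change of variables with Jacobian $1/g$ --- is exactly the paper's. The genuine gap lies in your treatment of the slab $s\in(s_1,s_2)$ and in the mechanism you invoke to keep the observability constant under control. For $s\in(s_1,s_2)$ the backward characteristic issued from $(s,T)$ only increases in size, so it leaves the observation strip after backward time $G(s_2)-G(s)$; the effective observation window therefore has length $\min\{T,\,G(s_2)-G(s)\}$, which tends to $0$ as $s\to s_2^{-}$. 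Your claimed resolution --- that {\bf (H3)} together with $T>T_0$ makes ``every effective time window bounded below by a positive constant'' --- is false precisely on this piece, and the constant $c_1e^{c_2/(t_1-t_0)}$ of Proposition \ref{a1} blows up there. The paper circumvents this by introducing the intermediate point $s_0\in(s_1,s_2)$ with $G(S)-G(s_0)=T$: for $s\in(s_0,S)$ the backward characteristic reaches the boundary $s=S$ at a positive time, so $h(\cdot,s,T)\equiv 0$ by the condition $h(\cdot,S,\cdot)=0$ and no observability estimate is needed; for $s\in(s_1,s_0)$ the window length is at least $G(s_2)-G(s_0)=T-S^{*}_{2}>0$, uniformly in $s$. (A symmetric threshold $b_0$ with $G(s_2)-G(b_0)=T$ is used on $(0,s_1)$ to distinguish whether the window is truncated at $t=0$; there the window length stays bounded below by $\min\{G(s_2)-G(s_1),\,T-S^{*}_{1}\}$, so your argument does close on that piece.) Without the splitting at $s_0$, your estimate on $(s_1,s_2)$ does not close.

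Two secondary inaccuracies. First, for $s\in(s_2,S)$ the backward characteristic never enters $(s_1,s_2)$, so there is no ``nontrivial sub-interval when $\gamma_s(\tau)\in(s_1,s_2)$''; the correct and sufficient argument there is solely that $w_s$ vanishes once the characteristic hits $s=S$ at $\tau_*(s)=G(S)-G(s)<S^{*}_{2}<T$, whence $h(\cdot,s,T)=0$ by uniqueness for the heat flow. Your conclusion on that piece is right, but the stated reason is not. Second, with your parametrization $w_s(x,\tau)=h(x,\gamma_s(\tau),T-\tau)$ the equation is $\partial_\tau w_s=-Lw_s$, so the well-posed direction is decreasing $\tau$ and $w_s(\cdot,0)=h(\cdot,s,T)$ is the \emph{final} state of the forward heat flow; this is consistent with applying Proposition \ref{a1}, but should be stated with the correct orientation (the paper's variable $\lambda=T-\tau$ avoids the sign confusion).
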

\begin{proof}
		We give below specific arguments for each of several cases.\smallbreak
		\textbf{Case 1}: \[T>\int_{0}^{S}\frac{ds}{g(s)}\hbox{ then } h(x,s,T)=0\hbox{ for all }(x,s)\in \Omega\times (0,S),\] and the estimate $\ref{A}$ is trivially valid.\smallbreak
\textbf{Case 2}: \[\int_{0}^{s_1}\dfrac{ds}{g(s)}\leq \int_{s_2}^{S}\dfrac{ds}{g(s)}<T\leq \int_{0}^{S}\frac{ds}{g(s)}\] and let \[b_0\in (0,s_1)\hbox{ and } s_0\in(s_1,s_2)\hbox{ such that }\int_{b_0}^{s_2}\frac{ds}{g(s)}=\int_{s_0}^{S}\frac{ds}{g(s)}=T.\] We also have other possibilities if \( b_0 \in [s_1, s_2) \) or \( s_0 \in [0, s_1] \).\smallbreak
Next, we split the interval $(0,S)$ as
\[(0,S)=(0,b_0)\cup (b_0,s_1)\cup (s_1,s_0)\cup (s_0,S).\]
Basically, this division depends on the point in which the trajectory  (or equivalently
 the backward characteristics staring from $(s,T)$) enters the observation region $(s_1,s_2)$ and exits
 from the same region (see figure .).\smallbreak
	
	\textbf{Upper bound on $(0,b_0)$}:\smallbreak
	For $s\in(0,b_0)$, we first set $w(x,\lambda)=h(x,G^{-1}(G(s)+T-\lambda),\lambda)\text{    } (\lambda\in (0,T)\text{ and } x\in\Omega).$ \smallbreak
	Then, $w$ verifies 
	\begin{equation}
		\left\{
		\begin{array}{ccc}
		\dfrac{\partial w(x,\lambda)}{\partial \lambda}-L w(x,\lambda)&=0&\text{ in } (0,T),\\
		\dfrac{\partial w}{\partial \nu}&=0&\text{ on } \partial \Omega\times (0,T)\\
		w(T)&=,h(x,G^{-1}(G(s)),T)&\text{ in }\Omega.
		\end{array}
		\right.,\label{ooo}
	\end{equation}
	By applying the Proposition \ref{a1} with $t_0=0$ and $t_1=G(s)+T-G(s_1),$ we obtain the following.\smallbreak
\[\int_{\Omega}w^2(x,T)dx\leq c_1e^{\dfrac{c_2}{G(s)+T-G(s_1)}}\int_{0}^{G(s)+T-G(s_1)}\int_{\omega}w^2(x,\lambda)dxd\lambda.\]
	Then, we have: 
\[\int_{\Omega}h^2(x,s,T)dx\leq c_1e^{\dfrac{c_2}{T-G(s_1)}}\int_{G^{-1}(G(s_1))}^{G^{-1}(G(s)+T)}\int_{\omega}\frac{1}{g(\alpha)}h(x,\alpha,T+G(s)-G(\alpha))dxd\lambda\]\[=C\int_{G^{-1}(G(s_1))}^{G^{-1}(G(s)+T)}\int_{\omega}h^2(x,\alpha,T+G(s)-G(\alpha))dxd\alpha\leq C\int_{s_1}^{G^{-1}(G(s)+T)}\int_{\omega}h(x,\alpha,T+G(s)-G(\alpha))dxd\alpha.\]	
Integrating with respect $s$ over $(0,b_0)$ we get:
\[\int_{0}^{b_0}\int_{\Omega}h^2(x,s,T)dxds\leq C\int_{0}^{b_0}\int_{s_1}^{G^{-1}(G(s)+T)}\int_{\omega}h^2(x,\alpha,T+G(s)-G(\alpha))dxd\alpha ds.\]
As
\[\int_{0}^{b_0}\int_{s_1}^{G^{-1}(G(s)+T)}\int_{\omega}h^2(x,\alpha,T+G(s)-G(\alpha))dxd\alpha ds=\int_{s_1}^{G^{-1}(G(s)+T)}\int_{G^{-1}(G(\alpha)-T)}^{b_0}\int_{\omega}h^2(x,\alpha,T+G(s)-G(\alpha))dxdsd\alpha ,\]
then,
\[\int_{0}^{b_0}\int_{\Omega}h^2(x,s,T)dxda\leq C\int_{s_1}^{s_2}\int_{0}^{T+G(b_0)-G(\alpha)}\int_{\omega}h^2(x,\alpha,t)dxdtd\alpha \]
Finally,
\begin{equation}
	\int_{0}^{b_0}\int_{\Omega}h^2(x,s,T)dxds\leq C\int_{0}^{T} \int_{s_1}^{s_2}\int_{\omega}h^2(x,s,t)dxdsdt \label{7}
\end{equation}
\textbf{Upper bound $(b_0,s_1)$}:\smallbreak
For $a\in (b_0,s_1)$ we always consider the system $(\ref{ooo})$ but $\lambda\in (T+G(s)-G(s_2),T).$\smallbreak
Applying the Proposition 2.3 with $t_0=G(s)+T-G(s_2)$ and $t_1=G(s)+T-G(s_1)$, we obtain
\[\int_{\Omega}h^2(x,s,T)dx\leq C\int_{s_1}^{s_2}\int_{\omega}\frac{1}{g(\alpha)}h^2(x,\alpha,\alpha-s)dxd\alpha.\]	
And as before, we get:
\begin{equation}
	\int_{b_0}^{s_1}\int_{\Omega}h^2(x,s,T)dxds\leq C\int_{0}^{T}\int_{s_1}^{s_2}\int_{\omega}h^2(x,s,t)dxdsdt \label{9}
	\end{equation}
	$\textbf{Upper bound} (s_1,s_0)$\smallbreak
	As above, for $t_0=T+G(s)-G(_2)$ and $t_1=T$ we obtain this inequality:
		\begin{equation}
	\int_{s_1}^{s_0}\int_{\Omega}h(x,s,T)dxd\leq C\int_{0}^{T}\int_{s_1}^{s_2}\int_{\omega}h^2(x,s,t)dxdsdt \label{10}
	\end{equation}
	Consequently, combining $(\ref{7})$, $(\ref{9})$ and $(\ref{10})$ we obtain:
	\[\int_{0}^{s_0}\int_{\Omega}h^2(x,s,T)dxds\leq C\int_{0}^{T}\int_{s_1}^{s_2}\int_{\omega}h(x,s,t)dxdsdt.\]	 
  \begin{figure}[H]
      \centering
\begin{tikzpicture}[scale=3]
    \begin{scope}
        \clip (-0.5,-0.5) rectangle (4.5,2.5);
        
        \draw[thick] (0,0) rectangle (4,2);
        
        \fill[gray!20] (1,0) rectangle (3,2);
        \draw[thick] (1,0) rectangle (3,2);
    
        \foreach \x in {1.4, 1.5, 1.6, 1.7, 1.8, 1.9, 2, 2.1, 2.2, 2.3, 2.4, 2.5, 2.6, 2.7, 2.8, 2.9, 3} {
            \draw[green, domain=0:2, samples=100, smooth, variable=\t] 
                plot ({\x - sqrt(\t)}, {\t});
        }
        \foreach \x in {3.2, 3.3, 3.4, 3.5, 3.6, 3.7, 3.8, 3.9, 4} {
            \draw[red, domain=0:2, samples=100, smooth, variable=\t] 
                plot ({\x - sqrt(\t)}, {\t});
        }
        \foreach \x in {4.2, 4.3, 4.4, 4.5, 4.6, 4.7, 4.8} {
            \draw[blue, domain=0:2, samples=100, smooth, variable=\t] 
                plot ({\x - sqrt(\t)}, {\t});
        }
    \end{scope}

    \draw[->] (-0.5,0) -- (4.5,0) node[right] {$x$};
    \draw[->] (0,-0.5) -- (0,2.5) node[above] {$y$};
    
    \node at (0,0) [below left] {$(0,0)$};
    \node at (4,0) [below right] {$(4,0)$};
    \node at (4,2) [above right] {$(4,2)$};
    \node at (0,2) [above left] {$(0,2)$};
    
    \node at (1,0) [below left] {$(1,0)$};
    \node at (3,0) [below right] {$(3,0)$};
    \node at (3,2) [above right] {$(3,2)$};
    \node at (1,2) [above left] {$(1,2)$};
\end{tikzpicture}
  \caption{Illustration of the observability inequality. The backward characteristic curves staring at $(s,T)$ in green and red have enough time to reach the observation region before reaching the boundary $t=0,$, while the backward characteristic curves $(s,T)$ in blue exit the domain through the boundary $s=S,$ (which is equivalent to $h(x,s,T)=0$)}
\end{figure}
\end{proof}
Before presenting the proof of Proposition \label{a1}, let us first recall the following observability inequality for the parabolic equation (see, for instance, Imanuvilov and Fursikov \cite{b10})
\begin{proposition}\label{a1}
	Let $T>0,$ $t_0$ and $t_1$ be such that $0<t_0<t_1<T.$ Then for every $w_0\in L^2(\Omega),$ the solution $w$ of the initial and boundary problem.
	\begin{equation}
		\left\lbrace
		\begin{array}{l}
		\dfrac{\partial w(x,\lambda)}{\partial \lambda}-L w(x,\lambda)=0\text{ in } \Omega\times(t_0,T)\\
		\dfrac{\partial w}{\partial \nu}=0 \text{ on } \partial\Omega\times(t_0,T)	\\
		w(x,t_0)=w_0(x)\text{ in }\Omega
		\end{array}\right.,
	\end{equation}
	satisfies the estimate
	\[\displaystyle\int_{\Omega}w^2(T,x)dx\leq\int_{\Omega}w^2(x,t_1)dx\leq c_1e^{\dfrac{c_2}{t_1-t_0}}\int\limits_{t_0}^{t_1}\int_{\omega}w^2(x,\lambda)dxd\lambda,\]
	where the constants $c_1$ and $c_2$ depend on $T$ and $\Omega.$\smallbreak
\end{proposition}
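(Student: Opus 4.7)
The plan is to treat the two inequalities separately. The first inequality $\int_{\Omega}w^{2}(T,x)\,dx\le\int_{\Omega}w^{2}(x,t_{1})\,dx$ is the standard $L^{2}$-dissipation estimate for the parabolic flow. The plan is to multiply the equation by $w$, integrate over $\Omega$, and use the Neumann condition $\partial w/\partial\nu=0$ to integrate by parts; combined with the uniform ellipticity of $L$, this yields
\[
\frac{1}{2}\frac{d}{d\lambda}\int_{\Omega}w^{2}(x,\lambda)\,dx+\int_{\Omega}\sum_{i,j=1}^{n}\sigma_{ij}(x)\,\partial_{x_{i}}w\,\partial_{x_{j}}w\,dx=0,
\]
so $\lambda\mapsto\|w(\cdot,\lambda)\|_{L^{2}(\Omega)}^{2}$ is non-increasing on $[t_{0},T]$, and since $t_{1}<T$ the first inequality follows at once.

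For the second (substantial) inequality, the plan is to invoke the global Carleman approach of Fursikov and Imanuvilov. First, construct an auxiliary function $\psi\in C^{2}(\overline{\Omega})$ with $\psi>0$ on $\overline{\Omega}$, $\partial\psi/\partial\nu\le 0$ on $\partial\Omega$, and $|\nabla\psi|\ge c_{0}>0$ on $\overline{\Omega}\setminus\omega_{0}$ for some $\omega_{0}\Subset\omega$. Setting $\theta(\lambda)=[(\lambda-t_{0})(t_{1}-\lambda)]^{-1}$, $\eta(x,\lambda)=\theta(\lambda)e^{\mu\psi(x)}$ and $\varphi(x,\lambda)=\theta(\lambda)\bigl(e^{\mu\|\psi\|_{\infty}}-e^{\mu\psi(x)}\bigr)$, a conjugation of the operator $\partial_{\lambda}-L$ by $e^{-s\varphi}$, followed by careful tracking of boundary terms (where the sign condition on $\partial\psi/\partial\nu$ enters), produces the global Carleman inequality
\[
\int_{t_{0}}^{t_{1}}\!\int_{\Omega}\bigl(s\eta|\nabla w|^{2}+s^{3}\eta^{3}w^{2}\bigr)e^{-2s\varphi}\,dx\,d\lambda\le C\int_{t_{0}}^{t_{1}}\!\int_{\omega}s^{3}\eta^{3}w^{2}e^{-2s\varphi}\,dx\,d\lambda,
\]
valid for all $s\ge s_{0}(t_{1}-t_{0})$ and $\mu\ge\mu_{0}$. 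Restricting the left-hand side to the central sub-interval $I_{0}=[(3t_{0}+t_{1})/4,(t_{0}+3t_{1})/4]$, on which both $\eta$ and the weight $e^{-2s\varphi}$ are controlled in terms of $t_{1}-t_{0}$, and then combining with the energy dissipation from the first step to pass from $\|w(\cdot,t_{1})\|_{L^{2}(\Omega)}$ to $\|w(\cdot,\lambda)\|_{L^{2}(\Omega)}$ for $\lambda\in I_{0}$, produces the stated observability inequality; the exponential factor $e^{c_{2}/(t_{1}-t_{0})}$ reflects the loss incurred by absorbing the weight $e^{-2s\varphi}$ on $I_{0}$, with $c_{1},c_{2}$ depending only on $\Omega$ and $T$.

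The main obstacle to a self-contained proof is the Carleman estimate itself: its derivation is technically lengthy, and the Neumann setting demands a more delicate handling of boundary terms than the Dirichlet one. Since this result is by now classical and fully carried out in \cite{b10}, the plan is to invoke it directly rather than reproduce the argument here; the remaining steps (energy dissipation plus absorption of the Carleman weights on $I_{0}$) are short and routine.
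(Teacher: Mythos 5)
Your proposal is correct and ultimately rests on the same ground as the paper, which states this proposition without proof as a classical observability inequality recalled from Fursikov--Imanuvilov \cite{b10}. Your added energy-dissipation argument for the first inequality and your outline of the Carleman-weight machinery behind the second are both accurate and standard, so invoking \cite{b10} for the technical core is exactly what the paper itself does.
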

\begin{proof}{Proof of the Proposition \ref{Prop1}}\smallbreak
Throughout the rest of the proof, without losing generality, we assume that $a_2=\hat{a}.$\smallbreak
For $a\in (0,\hat{a})$ (assumption $H_3$) we have $\beta(a,s,\hat{s})=0$, therefore the system (\ref{3}) is can be written by
\begin{equation}
\left\lbrace \begin{array}{l}
\dfrac{\partial q}{\partial t}-\dfrac{\partial q}{\partial a}-
g(s)\dfrac{\partial q}{\partial s}-L q+(\mu_1(a)+\mu_2(s)) q=0\text{ in }\Omega\times(0,\hat{a})\times(0,S)\times (0,T),\\ 
q(x,a,s,0)=q_0(x,a,s) \text{ in }\Omega\times(0,\hat{a})\times(0,S).
\end{array}
\right.
\label{ad11}
\end{equation}	
We denote by \[\tilde{q}(x,a,s,t)=q(x,a,s,t)\exp\left(-\int\limits_{0}^{a}\mu_1(\alpha)d\alpha-\int\limits_{0}^{s}\mu_2(r)dr\right).\] then $\tilde{q}$ satisfies
\begin{align}
	\dfrac{\partial \tilde{q}}{\partial t}-\dfrac{\partial \tilde{q}}{\partial a}-g(s)\dfrac{\partial \tilde{q}}{\partial s}-\Delta\tilde{q}=0\text{ in }\Omega\times(0,\hat{a})\times(0,S)\times (0,T). \label{ad1}
\end{align}
Let $S^*<S;$ (the real $S^*$ verifying $q(x,0,s,t)=0\hbox{ in }\Omega\times(S^*,S)\times(\eta,T)$ (to be explained later) proving the inequality (\ref{Bq}) leads also to show that,
there exits a constant $C>0$ such that the solution $\tilde{q}$ of (\ref{ad11}) satisfies 
\begin{equation}
\int\limits_{\eta}^{T}\int\limits_{0}^{S^*}\int\limits_{\Omega}q^2(x,0,s,t)dxdsdt\leq C\int\limits_{0}^{T}\int\limits_{s_1}^{s_2}\int\limits_{a_1}^{a_2}\int_{\omega}\tilde{q}^2(x,a,s,t)dxdadsdt. \label{cci}
\end{equation}
Indeed, we have 
\[
\int\limits_{\eta}^{T}\int\limits_{0}^{S^*}\int_{\Omega}q^2(x,0,s,t)dxdsdt\leq e^{2\int\limits_{0}^{\hat{a}}\mu_1(r)dr+2\int\limits_{0}^{S^*}\mu_2(r)dr}\int\limits_{\eta}^{T}\int\limits_{0}^{S}\int_{\Omega}\tilde{q}^2(x,0,s,t)dxdsdt\]
and
\[\int\limits_{0}^{T}\int\limits_{s_1}^{s_2}\int\limits_{a_1}^{a_2}\int_{\omega}\tilde{q}^2(x,a,s,t)dxdadsdt\leq C(e^{2\|\mu_1\|_{L^1(0,\hat{a})}+2\|\mu_2\|_{L^1(0,S^*)}})\int\limits_{0}^{T}\int\limits_{s_1}^{s_2}\int\limits_{a_1}^{a_2}\int_{\omega}q^2(x,a,s,t)dxdadsdt.
\]
We consider the following characteristic trajectory $\gamma(\lambda)=(t-\lambda,G^{-1}(G(s)+t-\lambda),\lambda).$ If $t-\lambda=0$ the backward characteristics starting from $(0,s,t).$ If $T<T_1,$ we cannot have information on all characteristics (see Figure 5 6). So we choose $T>T_1.$\smallbreak
	Without loss of generality, let us assume here $\eta< T\leq a_2.$\smallbreak
	The proof will be in two steps:\smallbreak
	\textbf{Step 1 : In this step, we show that the non local term $$q(x,0,s,t)=0\text{ for all }s\in (\alpha^*,S)\text{ and }t\in\left(a_1+S^{*}_{2},T\right).$$}\smallbreak
	First, we assume the existence of $\alpha^*>0$ such that $G(s_2)-G(\alpha^*)=a_1.$ This remains consistently true under the condition \( S^*_2 < \min\{\hat{a}, a_2\} \) and \( G(S) > \hat{a} \). \smallbreak
According to the assumptions $S_{2}^*<a_2-a_1$, 
 it's easy to proof (using the representation of $q$ by the semi group method) that $q(x,0,s,t)=0$ for all $$s\in (\alpha^*,S)\hbox{ and } t\in \left(a_1+S^{*}_2, T\right)$$ (see Figure 4).\smallbreak
Indeed, for every $$(s,t)\in (\alpha^*,S)\times \left(a_1+S^{*}_{2},T\right)\hbox{ and }S_{2}^*<a_2-a_1$$ we have, $$t-A<t-G(S)+G(s)\hbox{ and }G(S)-G(s)<S^{*}_{2}+a_1<t$$ then,
\[q(x,0,s,t)=\int\limits_{t-G(S)+G(s)}^{t}\left(e^{(t-l)L}\int\limits_{0}^{S}\beta(t-l,G^{-1}(G(s)+t-l),\hat{s})q(x,0,\hat{s},l)d\hat{s})\right)dl.\]
Moreover, we have $$0<t-l<G(S)-G(s)\hbox{ and as }s\in (\alpha^*,S)$$ then $$0<t-l<G(S)-G(s)<S^{*}_2+a_1<a_2.$$ The fertility $\beta$ being assumed to be zero on $(0,a_2),$ then $q(x,0,s,t)=0.$\smallbreak
Likewise, if $\alpha^*<0,$ the non local term verifies $$q(x,0,s,t)=0\hbox{ for all }s\in (0,S)\hbox{ and } t\in \left(G(S), T\right).$$\smallbreak
Let us then assume the existence of $\alpha^*>0$ such that $G(s_2)-G((\alpha^*)=a_1.$\smallbreak
\textbf{Step 2: Estimation of the non local term $$q(x,0,s,t)\hbox{ for }(s,t)\in (0,\alpha^*-\delta_2)\times (\eta,T),$$ where $\delta_2>0$ and $T_1<\eta<T$}.\smallbreak
In the case where \(\alpha^* > 0\), there can exist a real number \(s_1 > \beta^* > 0\) such that \(G(s_1)-G(\beta^*)= a_1\), or not.

\textbf{Case 1: no existence of $\beta^*>0$\hbox{ such that } $G(s_1)-G(\beta^*)=a_1$}.\smallbreak
We denote by:\smallbreak
\[w(x,\lambda)=\tilde{q}(x,t-\lambda,G^{-1}(G(s)+t-\lambda),\lambda) \text{ ; }(x\in\Omega\hbox{, }\lambda\in (0,t))\]
Then $w$ satisfies:
\begin{align}
	\left\lbrace
	\begin{array}{l}
		\dfrac{\partial w(x,\lambda)}{\partial \lambda}-L w(x,\lambda)=0\text{ in } ( \Omega\times (0,t))\\
		\dfrac{\partial w}{\partial \nu}=0 \text{ on } \partial\Omega\times (0,t)	\\
		w(x,0)	=\tilde{q}(x,t,G^{-1}(G(s)+t),0)\text{ in }\Omega
	\end{array}
	\right.,
\end{align}
Using the Proposition \ref{a1}  with $0<t_0<t_1<t$ we obtain:\smallbreak
\[\int_{\Omega}w^2(x,t)dx\leq\int_{\Omega}w^2(x,t_1)dx\leq c_1e^{\dfrac{c_2}{t_1-t_0}}\int\limits_{t_0}^{t_1}\int_{\omega}w^2(x,\lambda)dxd\lambda.\]
That is equivalent to
\[\int_{\Omega}\tilde{q}^2(x,0,s,t)dx\leq c_1e^{\frac{c_2}{t_1-t_0}}\int\limits_{t_0}^{t_1}\int_{\omega}\tilde{q}^2(x,t-\lambda,G^{-1}(G(s)+t-\lambda),\lambda)dxd\lambda\]\[=c_1e^{\frac{c_2}{t_1-t_0}}\int\limits_{t-t_1}^{t-t_0}\int_{\omega}\tilde{q}^2(x,\alpha,G^{-1}(G(s)+\alpha),t-\alpha)dxd\alpha.\]	
Then for $t_0=t-a_1-\delta_1$ and $t_1=t-a_1,$ we obtain\smallbreak
\[\int_{\Omega}\tilde{q}^2(x,0,s,t)dx\leq c_1e^{\frac{c_2}{\delta_1}}\int\limits_{a_1}^{a_1+\delta_1}\int_{\omega}\tilde{q}^2(x,\alpha,G^{-1}(G(s)+\alpha),t-\alpha)dxd\alpha.\]
Integrating with respect $s$ over $(0,\alpha^*-\delta_2)$ we get
\[\int\limits_{0}^{\alpha^*-\delta_2}\int_{\Omega}\tilde{q}^2(x,0,s,t)dxds\leq c_1e^{\frac{c_2}{\delta_1}}
\int\limits_{a_1}^{a_1+\delta_1}\int\limits_{G^{-1}(G(0)+a)}^{G^{-1}(G(\alpha^*-\delta_2)+a)}\int_{\omega}\frac{1}{g(l)}\tilde{q}^2(x,a,l,t-a)dxdlda.\]
But \[a_1>G(s_1)\] due to the non-existence of $\beta^*,$ then
\[\int\limits_{a_1}^{a_1+\delta_1}\int\limits_{G^{-1}(G(s_1))}^{G^{-1}\left(G(\alpha^*-\delta_2)+a\right)}\int_{\omega}\frac{1}{g(l)}\tilde{q}^2(x,a,l,t-a)dxdlda\]\[\leq C_{\delta_1,\delta_2}\int\limits_{a_1}^{a_1+\delta_1}\int\limits_{G^{-1}(G(s_1))}^{G^{-1}\left(\int\limits_{0}^{\alpha^*-\delta_2}\frac{ds}{g(s)}+\int\limits_{\alpha^*}^{s_2}\frac{ds}{g(s)}+\delta_1\right)}\int_{\omega}\frac{1}{g(l)}\tilde{q}^2(x,a,l,t-a)dxdlda.\]
Now, we choose $\delta_1>0\hbox{ and }\delta_2>0$ such that \[G(s_1)<\int\limits_{0}^{\alpha^*-\delta_2}\frac{ds}{g(s)}+\int\limits_{\alpha^*}^{s_2}\frac{ds}{g(s)}+\delta_1\leq G(s_2)\hbox{ and }a_1+\delta_1\leq a_2\] in order to have a lower cost.\smallbreak
The above inequality becomes
\[\int\limits_{a_1}^{a_1+\delta_1}\int\limits_{G^{-1}(G(s_1))}^{G^{-1}\left(\int\limits_{0}^{\alpha^*-\delta_2}\frac{ds}{g(s)}+\int\limits_{\alpha^*}^{s_2}\frac{ds}{g(s)}+\delta_1\right)}\int_{\omega}\frac{1}{g(l)}\tilde{q}^2(x,a,l,t-a)dxdlda\]\[\leq C_{\delta_1,\delta_2}\int\limits_{a_1}^{a_1+\delta_1}\int\limits_{G^{-1}(G(s_1))}^{G^{-1}\left(\int\limits_{0}^{\alpha^*}\frac{ds}{g(s)}+\int\limits_{\alpha^*}^{s_2}\frac{ds}{g(s)}\right)}\int_{\omega}\frac{1}{g(l)}(l)\tilde{q}^2(x,a,l,t-a)dxdlda.\]
Finaly, integrating with respect $t$ over $(\eta,T)$, we obtain
\[\int\limits_{\eta}^{T}\int\limits_{0}^{\alpha^*-\delta}\int_{\Omega}\tilde{q}^2(x,0,s,t)dxdsdt\]\[\leq C_{\delta_1,\delta_2} \int\limits_{a_1}^{a_1+\delta_1}\int\limits_{s_1}^{s_2}\int\limits_{\eta-a}^{T-a}\int_{\omega}\tilde{q}^2(x,a,s,t)dxdtdsda\]\[\leq C_{\delta_1,\delta_2,\eta}\int\limits_{0}^{T} \int\limits_{a_1}^{a_2}\int\limits_{s_1}^{s_2}\int_{\omega}\tilde{q}^2(x,a,s,t)dxdsdadt.\]
Then
\begin{equation}
	\int\limits_{\eta}^{T}\int\limits_{0}^{\alpha^*-\delta_2}\int_{\Omega}\tilde{q}^2(x,0,s,t)dxdsdt\leq C_{\delta_1,\delta_2,\eta}\int\limits_{0}^{T} \int\limits_{s_1}^{s_2}\int\limits_{a_1}^{a_2}\int_{\omega}\tilde{q}^2(x,a,s,t)dxdsdadt.
\end{equation}
\textbf{Case 2: there exists $\beta^*>0$\hbox{ such that } $G(s_1)-G(\beta^*)=a_1$}\smallbreak 
Then, we split the interval $(0,\alpha^*)=(0,\beta^*)\cup (\beta^*,\alpha^*).$
The case $s\in (\beta^*,\alpha^*-\delta_2)$ is similar to the above.\smallbreak
Indeed,
\[w(x,\lambda)=\tilde{q}(x,t-\lambda,G^{-1}(G(s)+t-\lambda),\lambda) \text{ ; }(x\in\Omega\text{ , }\lambda\in(0,t))\]
Then $w$ satisfies:
\begin{equation}
	\left\lbrace
	\begin{array}{l}
		\dfrac{\partial w(x,\lambda)}{\partial \lambda}-L w(x,\lambda)=0\text{ in } \Omega\times (0,t)\\
		\dfrac{\partial w}{\partial \nu}=0 \text{ on } \partial\Omega\times (0,t)	\\
		w(x,0)	=\tilde{q}(x,t,G^{-1}(G(s)+t),0)\text{ in }\Omega
	\end{array}\right.,
\end{equation}
Using the Proposition \ref{a1}  with $0<t_0<t_1<t$ we obtain:\smallbreak
\[\int_{\Omega}w^2(x,t)dx\leq\int_{\Omega}w^2(x,t_1)dx\leq c_1e^{\dfrac{c_2}{t_1-t_0}}\int\limits_{t_0}^{t_1}\int_{\omega}w^2(x,\lambda)dxd\lambda.\]
That is equivalent to
\[\int_{\Omega}\tilde{q}^2(x,0,s,t)dx\leq c_1e^{\frac{c_2}{t_1-t_0}}\int\limits_{t_0}^{t_1}\int_{\omega}\tilde{q}^2(x,t-\lambda,G^{-1}(G(s)+t-\lambda),\lambda)dxd\lambda\]\[=c_1e^{\frac{c_2}{t_1-t_0}}\int\limits_{t-t_1}^{t-t_0}\int_{\omega}\tilde{q}^2(x,\alpha,G^{-1}(G(s)+\alpha),t-\alpha)dxd\alpha.\]	
and we denote by $t_0=t-a_1-\delta_1$ and $t_1=t-a_1,$ we obtain\smallbreak
\[\int_{\Omega}\tilde{q}^2(x,0,s,t)dx\leq C_{\delta_1}\int\limits_{a_1}^{a_1+\delta_1}\int_{\omega}\tilde{q}^2(x,\alpha,G^{-1}(G(s)+\alpha),t-\alpha)dxd\alpha.\]
Integrating with respect to $s$ over $(\beta^*,\alpha^*-\delta_2)$ we get
\[\int\limits_{\beta^*}^{\alpha^*-\delta_2}\int_{\Omega}\tilde{q}^2(x,0,s,t)dxds\leq C_{\delta_1,\delta_2}
\int\limits_{a_1}^{a_1+\delta_1}\int\limits_{s_1}^{s_2}\int_{\omega}\frac{1}{g(l)}\tilde{q}^2(x,a,l,t-a)dxdlda.\]
Finally, integrating with respect to $t$ over $(\eta,T)$, we obtain
\[\int\limits_{\eta}^{T}\int\limits_{\beta^*}^{\alpha^*-\delta_2}\int_{\Omega}\tilde{q}^2(x,0,s,t)dxdsdt\leq C_{\delta_1,\delta_2,\eta} \int\limits_{a_1}^{a_1+\delta}\int\limits_{s_1}^{s_2}\int\limits_{\eta-a}^{T-a}\int_{\omega}\tilde{q}^2(x,a,s,t)dxdtdsda\]\[\leq C_{\delta_1,\delta_2,\eta}\int\limits_{0}^{T} \int\limits_{a_1}^{a_2}\int\limits_{s_1}^{s_2}\int_{\omega}\tilde{q}^2(x,a,s,t)dxdsdadt.\]
Then
\begin{equation}
	\int\limits_{\eta}^{T}\int\limits_{\beta^*}^{\alpha^*-\delta_2}\int_{\Omega}\tilde{q}^2(x,0,s,t)dxdsdt\leq C_{\delta_1,\delta_2,\eta}\int\limits_{0}^{T} \int\limits_{s_1}^{s_2}\int\limits_{a_1}^{a_2}\int_{\omega}\tilde{q}^2(x,a,s,t)dxdsdadt.\label{rr}
\end{equation}
\textbf{The case $s\in (0,\beta^*)$} \smallbreak 
Here again we have two situations:
\[G(s_2)-G(\beta^*)> a_2\hbox{ and }G(s_2)-G(\beta^*)< a_2.\]
If $G(s_2)-G(\beta^*)< a_2,$ we split $(0,\beta^*)$ as follows
\[(0,\beta^*)=(0,\gamma^*)\cup(\gamma^*,\beta^*),\] if there exists $\gamma^* $ such that $\int\limits_{\gamma^*}^{s_2}\frac{ds}{g(s)}=a_2,$ otherwise we estimate directly on $(0,\beta^*).$
Here we will perform the calculations only in the case $G(s_2)-G(\beta^*)> a_2.$\smallbreak
We denote by:\smallbreak
\[w(x,\lambda)=\tilde{q}(x,t-\lambda,G^{-1}(G(s)+t-\lambda),\lambda) \text{ ; }(x\in\Omega\hbox{, }\lambda\in (0,t))\]
Then $w$ satisfies:
\begin{align}
	\left\lbrace
	\begin{array}{l}
		\dfrac{\partial w(x,\lambda)}{\partial \lambda}-L w(x,\lambda)=0\text{ in } ( \Omega\times (0,t))\\
		\dfrac{\partial w}{\partial \nu}=0 \text{ on } \partial\Omega\times (0,t)	\\
		w(x,0)	=\tilde{q}(x,t,G^{-1}(G(s)+t),0)\text{ in }\Omega
	\end{array}
	\right.,
\end{align}
Using the Proposition \ref{a1}  with $0<t_0<t_1<t$ we obtain:\smallbreak
\[\int_{\Omega}w^2(x,t)dx\leq\int_{\Omega}w^2(x,t_1)dx\leq c_1e^{\dfrac{c_2}{t_1-t_0}}\int\limits_{t_0}^{t_1}\int_{\omega}w^2(x,\lambda)dxd\lambda.\]
That is equivalent to
\[\int_{\Omega}\tilde{q}^2(x,0,s,t)dx\leq c_1e^{\dfrac{c_2}{t_1-t_0}}\int\limits_{t_0}^{t_1}\int_{\omega}\tilde{q}^2(x,t-\lambda,G^{-1}(G(s)+t-\lambda),\lambda)dxd\lambda\]\[=c_1e^{\frac{c_2}{t_1-t_0}}\int\limits_{t-t_1}^{t-t_0}\int_{\omega}\tilde{q}^2(x,\alpha,G^{-1}(G(s)+\alpha),t-\alpha)dxd\alpha.\]
Then for $t_0=t-G(s_1+\kappa)$ where $\kappa>0$ and $t_1=t-G(s_1),$ we obtain the following.\smallbreak
\[\int_{\Omega}\tilde{q}^2(x,0,s,t)dx\leq C_{\kappa}\int\limits_{G(s_1)}^{G(s_1+\kappa)}\int_{\omega}\tilde{q}^2(x,\alpha,G^{-1}(G(s)+\alpha),t-\alpha)dxd\alpha.\]
We known $a_1<G(s_1).$\smallbreak
Integrating with respect to $s$ over $(0,\beta^*)$ we get
\[\int\limits_{0}^{\beta^*}\int_{\Omega}\tilde{q}^2(x,0,s,t)dxds\leq C_{\kappa}
\int\limits_{G(s_1)}^{G(s_1+\kappa)}\int\limits_{G^{-1}(G(s_1))}^{G^{-1}(G(\beta^*)+G(s_1+\kappa))}\int_{\omega}\frac{1}{g(l)}\tilde{q}^2(x,a,l,t-a)dxdlda.\] We obtain
\[\int\limits_{0}^{\beta^*}\int_{\Omega}\tilde{q}^2(x,0,s,t)dxds\leq C_{\kappa}
\int\limits_{a_1}^{a_2}\int\limits_{s_1}^{G^{-1}(G(\beta^*)+G(s_1+\kappa))}\int_{\omega}\tilde{q}^2(x,a,l,t-a)dxdlda.\]
 we choose $\kappa>0$ such that $G(s_1+\kappa)+G(s_1)<G(s_2)>a_2+G(\beta^*);$ which is possible because $a_2>G(s_1)+a_1$ and $a_1=G(s_1)-G(\beta^*)$, therefore $a_2+G(\beta^*)>2G(s_1). $
 Then \[\int\limits_{0}^{\beta^*}\int_{\Omega}\tilde{q}^2(x,0,s,t)dxds\leq C_{\kappa}
\int\limits_{a_1}^{a_2}\int\limits_{s_1}^{s_2}\int_{\omega}\tilde{q}^2(x,a,l,t-a)dxdlda.\]
Finally, integrating with respect to $t$ over $(\eta,T)$, we obtain
\[\int\limits_{\eta}^{T}\int\limits_{0}^{\beta^*}\int_{\Omega}\tilde{q}^2(x,0,s,t)dxdsdt\leq C_{\kappa,\eta} \int\limits_{s_1}^{s_1+\kappa}\int\limits_{s_1}^{G(s_1+\kappa)+G(s_1)}\int\limits_{\eta-a}^{T-a}\int_{\omega}\tilde{q}^2(x,a,s,t)dxdtdsda.\]
We choose $\kappa$ small enough such that \[G(s_1+\kappa)<T<a_2\]
Then, we get
\begin{equation}
	\int\limits_{\eta}^{T}\int\limits_{0}^{\beta^*}\int_{\Omega}\tilde{q}^2(x,0,s,t)dxdsdt\leq C_{\kappa,\eta}\int\limits_{0}^{T} \int\limits_{s_1}^{s_2}\int\limits_{a_1}^{a_2}\int_{\omega}\tilde{q}^2(x,a,s,t)dxdsdadt.
\end{equation}
Finally,
combining $(\ref{rr})$ and the fact that $$q(x,0,s,t)=0\hbox{ for } t\in \left(S^{*}_{2}+a_1+\delta_2,+\infty\right)\hbox{ } s\in (\alpha^*-\delta_2,S)$$, we obtain:
\[\int\limits_{\eta}^{T}\int\limits_{0}^{S}\int_{\Omega}\tilde{q}^2(x,0,s,t)dxdsdt\leq C_{\kappa,\eta,\delta_1,\delta_2}\int\limits_{0}^{T} \int\limits_{s_1}^{s_2}\int\limits_{a_1}^{a_2}\int_{\omega}\tilde{q}^2(x,a,s,t)dxdsdadt\] where $\max\left\{S^{*}_{2}+a_1+\delta_2,S^{*}_{1}\right\}<\eta<T.$
\begin{remark}
	In all the cases, when $\delta_1 \longrightarrow 0\hbox{ or }\kappa \longrightarrow0 \hbox{ or }\eta \longrightarrow T_1$ we have $C_{\eta,\kappa,\delta_1,\delta_2}\longrightarrow +\infty.$ 
\end{remark}
\end{proof}
In this illustration, we have taken the characteristics defined by \((- \lambda + a_0, -g(\lambda) + s_0, \lambda + t_0)\) with \(g(\lambda) = \sqrt{2\lambda}\).
 \begin{figure}[H]
 \begin{adjustbox}{center,left}
			\begin{overpic}[scale=0.2]{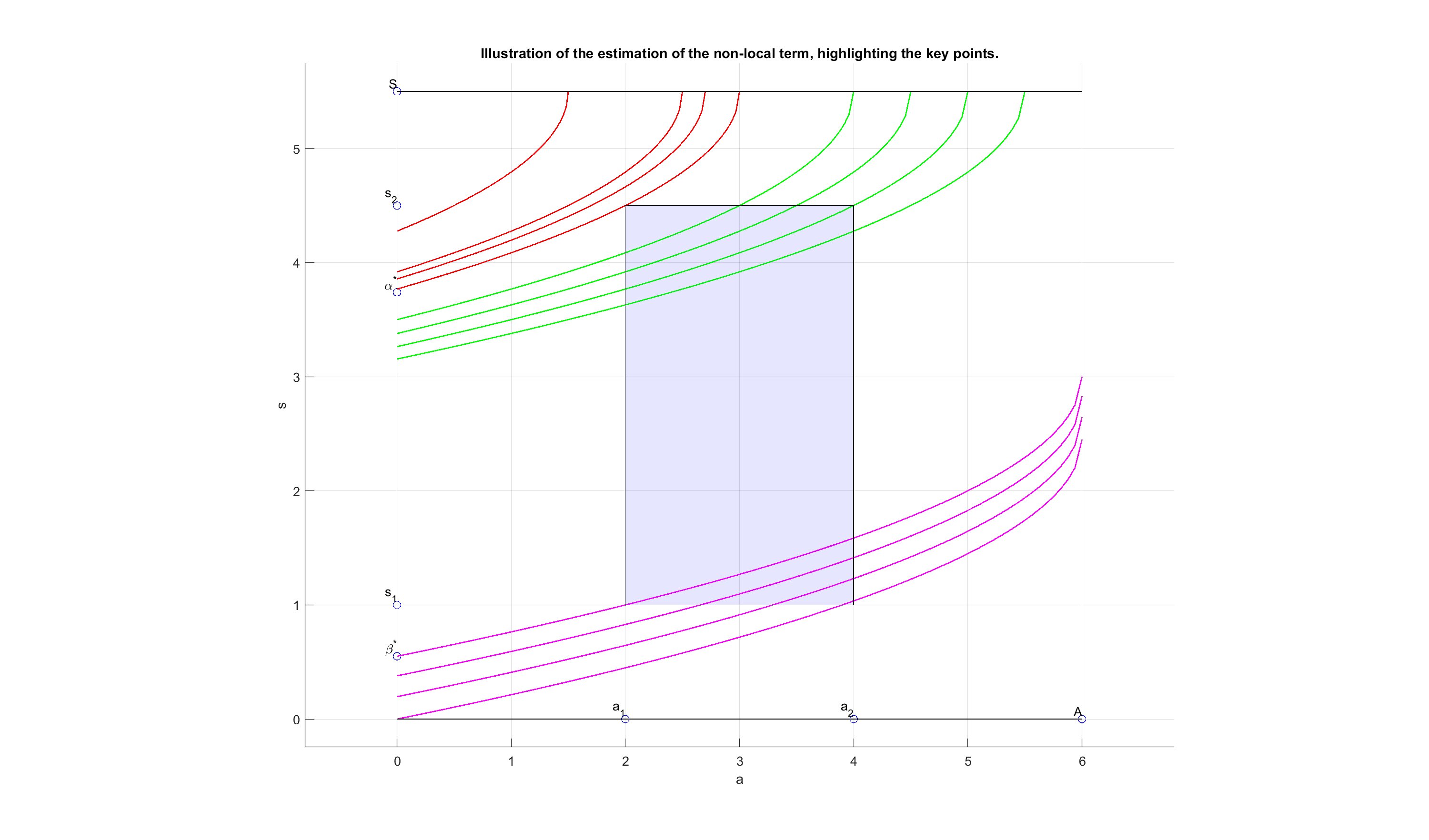}
\end{overpic}
\end{adjustbox}
\caption{An illustration of the estimate of $q(x,0,s,t)$\smallbreak Here we have chosen $a_2=\hat{a}$. Since $t>T_1$ all the backward characteristics starting from $(0,s,t)$ enters the observation domain (the green and blue lines), or without the domain by the boundary $s=S$ (red line)}
\end{figure}
\begin{figure}[H]
\begin{adjustbox}{center,left}
		\begin{overpic}[scale=0.2]{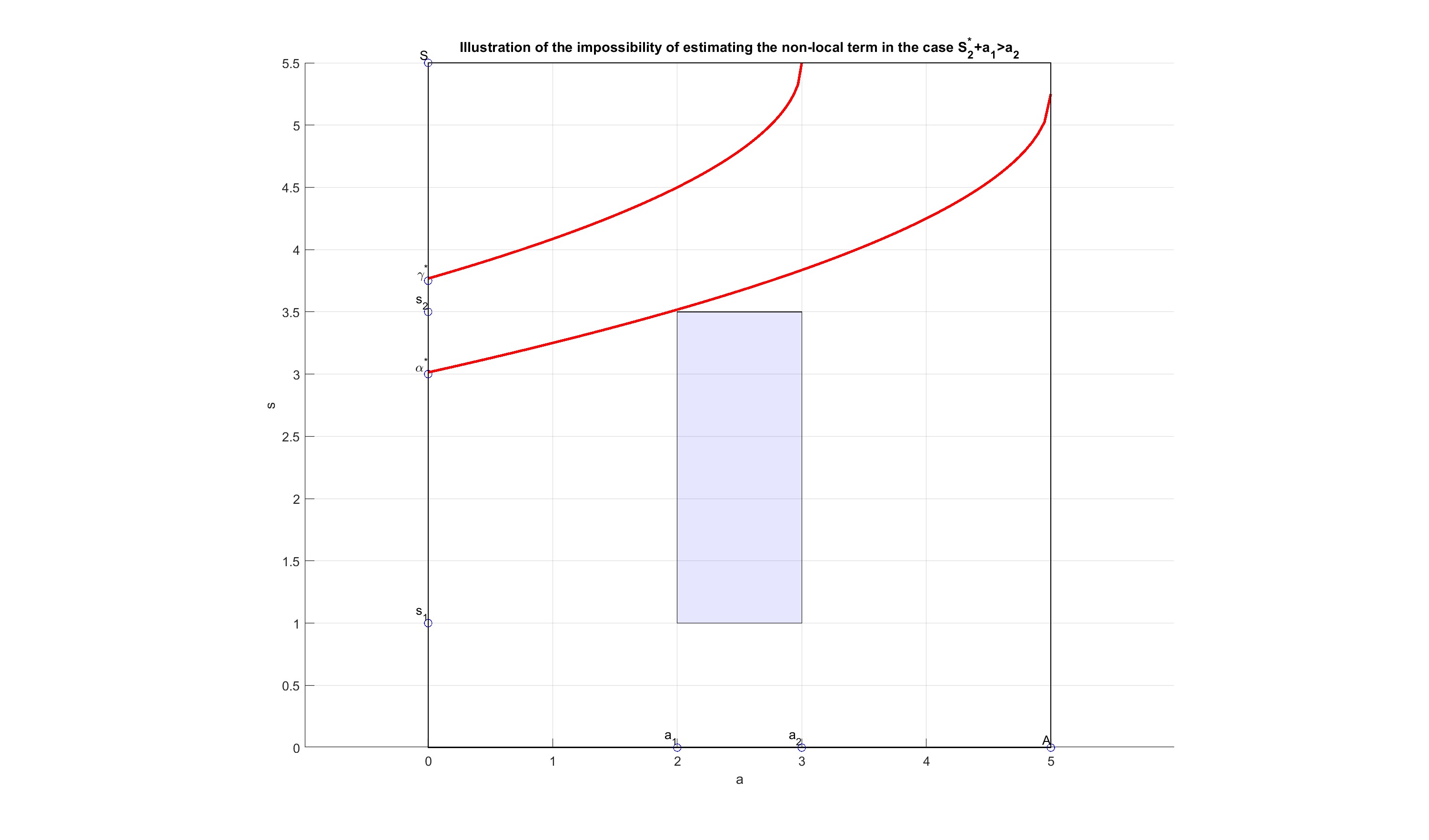}
		\end{overpic}
  \end{adjustbox}
  \caption{For $S^{*}_2>a_2-a_1,$ we can not estimate $q (x,0,s,t)$ for $s\in(\alpha^*,v^*)\hbox{ or }s\in (\alpha^*,S),$ (where $v^*$ is such that $\int_{v^*}^{S}\frac{ds}{g(s)}=a_2$) by the characteristic method. Indeed for even $t>T_1\hbox{ and }s\in (\alpha^*,v^*)$ the characteristics starting at $(0,s,t)$ without the domain by the boundary $t=0 \hbox{ or enter in the region } a>a_2,$ without going through the observation domain.}
	\end{figure}
	\begin{figure}[H]
 \begin{adjustbox}{center,left}
		\begin{overpic}[scale=0.2]{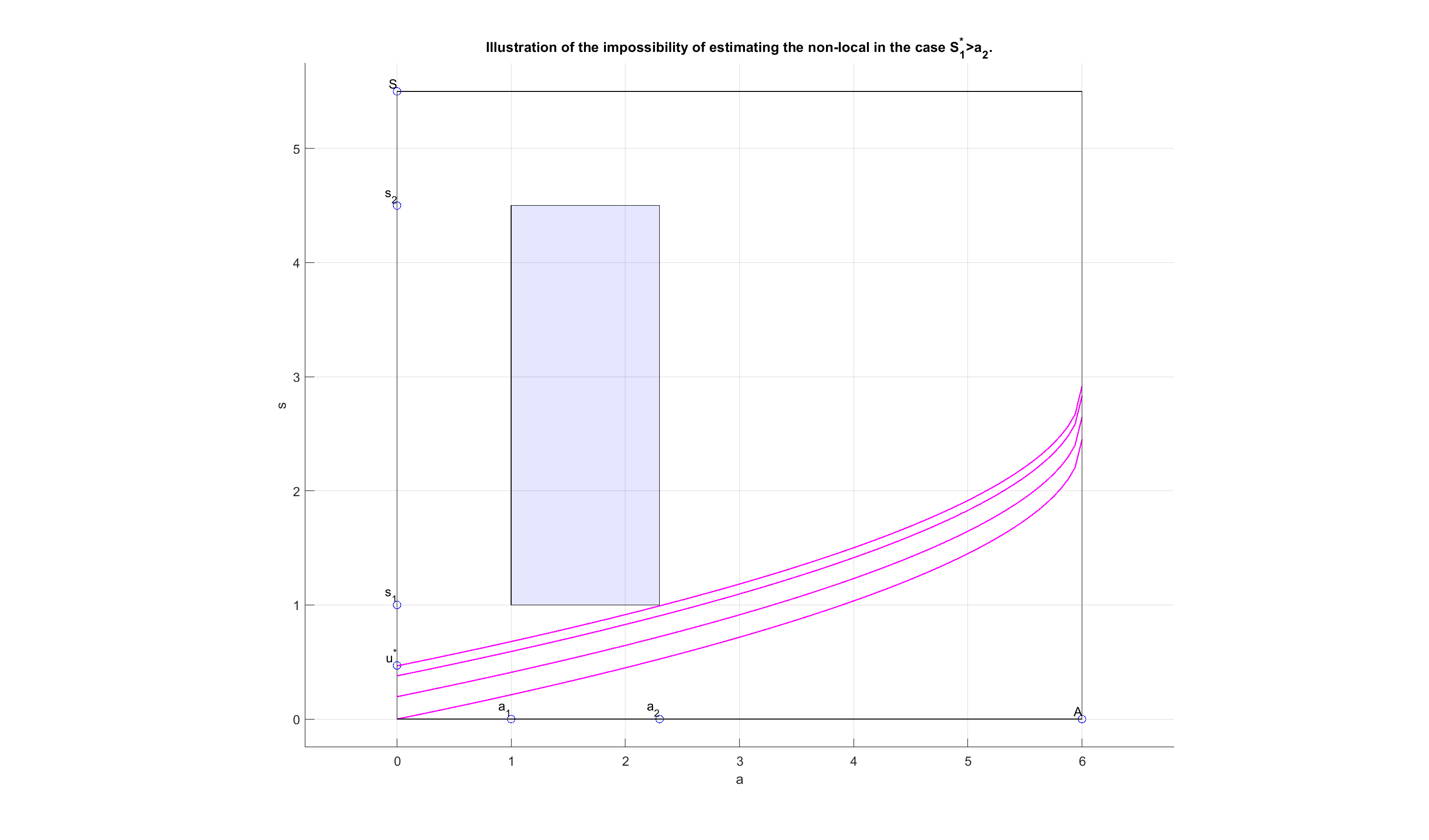}
		
		\end{overpic}
\end{adjustbox}
\caption{For the second case if $S^{*}_1>a_2,$ we can not estimate $q (x,0,s,t)$ for $s\in (0,u^*)$ by the characteristic method, where $u^*$ is such that $\int_{u^*}^{s_1}\frac{ds}{g(s)}=a_2$. Indeed, for even $t>T_1\hbox{ and } s\in (0,u^*)$ the characteristics starting at $(0,s,t)$ without the domain by the boundary $t=0 \hbox{ or enter in the region } a>a_2,$ without going through the observation domain.}\end{figure}
\begin{proof}{Proposition \ref{a3}.}\smallbreak
\textbf{Proof of inequality \ref{Cbb1}}\smallbreak
We denote by $$\mathbf{A}\psi=\partial_{s}(g(s)\psi)-L\psi+\mu_2(s)\psi$$ the operator defined on \[D(\mathbf{A})=\{\varphi/ \mathbf{A}\varphi\in L^{2}(\Omega\times (s_1,S)), \quad \varphi(x,s_1)=\varphi_{s_1}\quad  \dfrac{\partial\varphi}{\partial\nu}|_{\partial\Omega}=0 \}\], $$\mathbf{B}=\mathbf{1}_{(s_1,s_2)}\mathbf{1}_{\omega}\hbox{ and } \mathcal{B}=\mathbf{1}_{(a_1,a_2)}\mathbf{B}.$$\smallbreak
		The operator $\mathcal{A}$ can be rewritten by: \[\mathcal{A}\varphi=-\partial_{a}\varphi-\mathbf{A}\varphi-\mu_1(a)\varphi\quad for \quad \varphi\in D(\mathcal{A}),\]
and the adjoint $\mathcal{A}^*$ of the operator $\mathcal{A}$  is defined by:
		\[\mathcal{A}^*\varphi=\partial_{a}\varphi-\mathbf{A}^*\varphi-\mu_1(a)\varphi +\int\limits_{0}^{S}\beta(a,s,\hat{s})\varphi(x,0,\hat{s})d\hat{s}\hbox{ for } \varphi\in D(\mathcal{A^*}).\]
		 The $(\mathbf{A}^*,\mathbf{B}^*)$ is final state observable for every $T>S_{2}^*$ according to Theorem \ref{resum} in the Appendix. Finally, the results of Proposition 3.5. of \cite{abst} considering the adjoint system on $\Omega\times(0,A)\times (s_1,S)\times (0,T),$ give the inequality (\ref{Cbb1}).\smallbreak
   \textbf{ Proof of inequality \ref{C8aa} and \ref{xcxa}}
   
   Here also, we consider the following characteristic trajectory \[\gamma(\lambda)=(a+T-\lambda,G^{-1}(G(s)+T-\lambda),\lambda).\] 
   If $\lambda=T$ the backward characteristics starting from $(a,s,T).$ If $T<T_1,$ we cannot have information on all characteristics. So we choose $T>T_1.$\smallbreak
   Let $\kappa^*$ such that $G(s_1)=a_2-\kappa^*$ and consider $a^*_0>0$ such that $a^*_0<\kappa^*.$\smallbreak
For inequality the \ref{C8aa}, wee proceed as in the proof of the inequality \ref{Cbb1}.\smallbreak
Let as proof the inequality \ref{xcxa}.\smallbreak
Let $b_0$ such that $G(s_1)=b_0-a^*_0$ and we define 
\[D_1=\{(a,s)\in [a^*_0,b_0]\times[0,s_1]\hbox{ such that }a^*_0\leq a\leq b_0\hbox{ and }G^{-1}(a-a^*_0)\leq s\leq s_1\}.\] 
Without loss of generality, we assume that $G(s_2) - G(s_1) \geq a_2 - a_0^*$; otherwise, let $b_1 \in (a_0^*, a_2)$ and $b_2 \in (0, s_1)$ such that $G(s_2) - G(s_1) = a_2 - b_1$ and $G(s_2) - G(b_2) = a_2 - a_0^*$. Therefore, we divide $D_1$ into $D_1^1 \cup D_1^2 \cup D_1^3$ as follows:
\[D^1_1=\{(a,s)\hbox{ such that }a^*_0\leq a\leq b_1\hbox{ and }G^{-1}(G(b_2)+a-a^*_0)\leq s\leq s_1\},\] 
\[D^2_1=\{(a,s)\hbox{ such that }a^*_0\leq a\leq b_1\hbox{ and }G^{-1}(a-a^*_0)\leq s\leq G^{-1}(G(b_2)+a-a^*_0)\},\] and
\[D^3_1=\{(a,s)\hbox{ such that }b_1\leq a\leq b_0\hbox{ and }G^{-1}(a-a^*_0)\leq s\leq s_1\}\]
	For $(a,s)\in D_1$, we first set $w(x,\lambda)=\tilde{q}(x,a+T-\lambda,G^{-1}(G(s)+T-\lambda),\lambda)\text{    } (\lambda\in (0,T)\text{ and } x\in\Omega).$ \smallbreak
	Then, $w$ verifies 
	\begin{equation}
		\left\{
		\begin{array}{ccc}
		\dfrac{\partial w(x,\lambda)}{\partial \lambda}-L w(x,\lambda)&=0&\text{ in } (0,T),\\
		\dfrac{\partial w}{\partial \nu}&=0&\text{ on } \partial \Omega\times (0,T)\\
		w(0)&=,q(x,a+T,G^{-1}(G(s)+T),0)&\text{ in }\Omega.
		\end{array}
		\right.,\label{ooo}
	\end{equation}
	By applying the Proposition \ref{a1} with $t_0=a+T-a_2$ and $t_1=T-a_2+\delta,$  we obtain the following.\smallbreak
\[\int_{\Omega}w^2(x,T)dx\leq c_1e^{\dfrac{c_2}{\delta}}\int_{a+T-a_2}^{T-a_2+\delta}\int_{\omega}w^2(x,\lambda)dxd\lambda.\]
Then, we get:
\[\int_{\Omega}\tilde{q}^2(x,a,s,T)dx\leq c_1e^{\dfrac{c_2}{\delta}}\int_{a_2-\delta}^{a_2}\int_{\omega}\tilde{q}(x,u,G^{-1}(G(s)+u-a),a+T-u)dxdu\]
Integrating with respect $s$ over $(G^{-1}(a-a^*_0),s_1)$ we obtain:
\[\int_{G^{-1}(a-a^*_0)}^{s_1}\int_{\Omega}\tilde{q}^2(x,a,s,T)dxds\leq C\int_{a_2-\delta}^{a_2}\int_{G^{-1}(a-a^*_0)}^{s_1}\int_{\omega}\tilde{q}^2(x,u,G^{-1}(G(s)+u-a),a+T-u)dxds du.\]
Then,
\[\int_{G^{-1}(a-a^*_0)}^{s_1}\int_{\Omega}\tilde{q}^2(x,a,s,T)dxds\leq C\int_{a_2-\delta}^{a_2}\int_{G^{-1}(u-a^*_0)}^{G^{-1}(G(s_1)+u-a))}\int_{\omega}\tilde{q}^2(x,u,l,a+T-u)dxdl du.\]
Using the fact that $G(s_2)-G(s_1)\geq a_2-a_0^*$ and choosing $\delta$ such that $G^{-1}(a_2-\delta-a^*_0)\geq s_1$ we get after we integrate the previous result over $(a^*_0,b_0)$ that
\[\int_{a^*_0}^{b_0}\int_{G^{-1}(a-a^*_0)}^{s_1}\int_{\omega}\tilde{q}^2(x,a,s,T)dx ds da \leq C\int_{a_2-\delta}^{a_2}\int_{s_1}^{s_2}\int_{a^*_0}^{b_0}\int_{\omega}\tilde{q}^2(x,u,l,a+T-u)dxdadldu .\]
Therefore,
\[\int_{a^*_0}^{b_0}\int_{G^{-1}(a-a^*_0)}^{s_1}\int_{\omega}\tilde{q}^2(x,a,s,T)dx ds da \leq C\int_{a_2-\delta}^{a_2}\int_{0}^{T}\int_{s_1}^{s_2}\int_{\omega}\tilde{q}^2(x,u,l,v)dxdldu dv.\]
 \begin{figure}[H]
 \begin{adjustbox}{center,left}
			\begin{overpic}[scale=0.18]{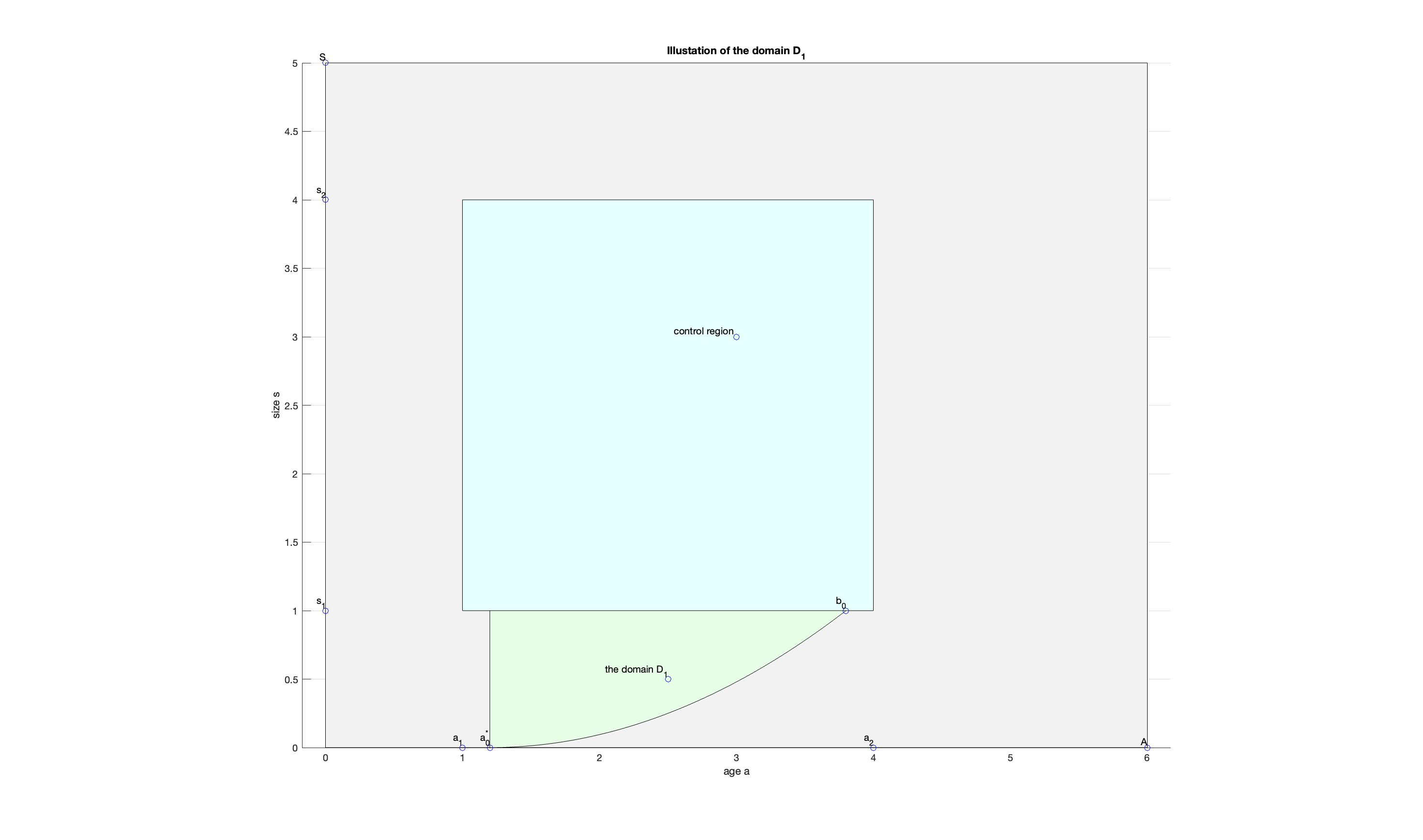}
\end{overpic}
\caption{Illustration of the integration domain in the case of the inequality \ref{xcxa}.}
\end{adjustbox}

\end{figure}
\end{proof}
      \part*{Conclusion}
 Considering a population dynamics model with age, size structuring, and diffusion, we establish null controllability results for two models with control localized with respect to age, size, and space variables.
First, we show that if the birth rate at time \(t\) is given by
\[
y(x,0,s,t) = \int_{0}^{A} \int_{0}^{S} \beta(a,\hat{s},s) y(x,a,\hat{s},t) \, da \, d\hat{s},
\]
then the null controllability estimated time is
\[T > A - a_2 + \max\{a_1 + S^*_2, S^*_1\} + \max\{S^*_2, S^*_1\}.\]
In the second part, we show that if the birth rate is
\[y(x,0,s,t) = \int_{0}^{A} \beta(a,s) y(x,a,s,t) \, da,\]
then the estimated time is
\[T > A - a_2 + a_1 + S_1^* + S_2^*,\]
which is more optimal.
These two results allow us to conclude that the form of the birth rate influences the minimal controllability time. In fact, we highlight that the random size of the newborns is a very important factor in estimating the control time.\smallbreak
 \textbf{Acknowledgement}
The authors wish to thank Prof. Enrique Zuazua and Dr. Thimoth\'ee Crin-Barat for his comments, suggestions and for fruitful discussions.\smallbreak
The author would like to th for fruitful discussions.

\end{document}